\newtheorem{thm}{Theorem}[section]
\newtheorem{cor}[thm]{Corollary}
\newtheorem{lemma}[thm]{Lemma}
\newtheorem{prop}[thm]{Proposition}
\theoremstyle{definition}
\newtheorem{defn}[thm]{Definition}
\newtheorem*{claim}{Claim}
\newtheorem{ex}[thm]{Example}
\theoremstyle{remark}
\newtheorem{rem}[thm]{Remark}
\newcommand{\ZZ}{{\mathbb  Z}}
\newcommand{\RR}{{\mathbb  R}}
\newcommand{\CC}{{\mathbb  C}}
\renewcommand{\cL}{{\mathcal L}}
\newcommand{\cF}{{\mathcal F}}
\newcommand{\F}{{\mathcal F}}
\newcommand{\mfg}{\mathfrak{g}}
\newcommand{\mfh}{\mathfrak{h}}
\newcommand{\mfa}{\mathfrak{a}}
\newcommand{\mft}{\mathfrak{t}}
\newcommand{\Ad}{\operatorname{Ad}}
\newcommand{\coker}{\operatorname{coker}}
\newcommand{\rk}{\operatorname{rank}}
\newcommand{\bas}{\operatorname{bas}}
\newcommand{\im}{\operatorname{im}}
\newcommand{\id}{\operatorname{id}}
\newcommand{\odd}{\operatorname{odd}}
\newcommand{\even}{\operatorname{even}}
\newcommand{\rank}{\operatorname{rank}}
\newcommand{\U}{\mathrm{U}}
\newcommand{\SU}{\mathrm{SU}}
\newcommand{\GL}{\mathrm{GL}}
\newcommand		{\quotientmed}[2]	{{\raisebox{.2em}{$#1$}}\  \!\!\big/\!\!\ 
	{\raisebox{-.2em}{$#2$}}}
\title[Equivariant de Rham cohomology]{Equivariant de Rham cohomology:\\ Theory and applications}
\author{Oliver Goertsches}
\email{goertsch@mathematik.uni-marburg.de}
\author{Leopold Zoller}
\email{zoller@mathematik.uni-marburg.de}
\address{Philipps Universit\"at Marburg\\ Fachbereich Mathematik und Informatik\\ Hans-Meerwein-Stra\ss e\\ 35032 Marburg\\ Germany}
\begin{document}

\numberwithin{equation}{section}

\begin{abstract}
This is a survey on the equivariant cohomology of Lie group actions on manifolds, from the point of view of de Rham theory. Emphasis is put on the notion of equivariant formality, as well as on applications to ordinary cohomology and to fixed points.
\end{abstract}

\maketitle

\tableofcontents

\section{Introduction}

Equivariant cohomology is a topological invariant, not of spaces, but of group actions. It encodes in a subtle way information on the topology of the space, the isotropy groups of the action, and the orbit stratification, in particular on the fixed points of the action. In was introduced by Borel \cite{Borel} and H.\ Cartan \cite{Cartan1}, \cite{Cartan2} in the 1950s and has found numerous applications wherever symmetries of geometric objects play a role.  These purpose of these notes is twofold: they try to give a gentle introduction to this beautiful theory from the point of view of de Rham theory, and to survey both classical and more recent applications.

In the first few sections we introduce three different types of cohomology one can associate to a Lie group action on a manifold: cohomology of invariant forms, basic cohomology, and our main player, equivariant cohomology. After comparing them to each other and to ordinary (de Rham) cohomology we prove some basic results on equivariant cohomology like the homotopy axiom and the Mayer-Vietoris sequence.

We explain how equivariant cohomology can be used to gain information on both the ordinary cohomology of the manifold $M$ acted on, as well as on the fixed point set of the action. The main tool to relate equivariant cohomology to the fixed point set is the Borel localization theorem, which is the topic of Section \ref{sec:borellocalization}. We explain how one uses it to show the equalities of the Euler characteristics of $M$ and the fixed point set $M^T$, as well as the inequality of total Betti numbers $\dim H^*(M^T)\leq \dim H^*(M)$, in Section \ref{sec:fixedpoints}. 

Starting with Section \ref{sec:equivariantformality} we make use of the spectral sequence of the Cartan model, as there we introduce another main topic of this survey, the notion of equivariant formality.  All necessary knowledge on spectral sequences is contained in the appendix; in particular, there one can find details on the relation between the equivariant cohomology and the final page of the spectral sequence that are usually glossed over in the literature.  Equivariant formality of an action is the condition that the spectral sequence of the Cartan model degenerates at $E_1$. In Theorem \ref{thm:bigthmequivformal} we prove some equivalent formulations of this condition, one of which enables one to compute ordinary from equivariant cohomology. We apply this to obtain information on the cohomology of homogeneous spaces in Section \ref{sec:cohomhomspaces}, and of GKM manifolds in Section \ref{sec:HMHTM}.

In the last sections we give a short overview on some recent developments. The choice of material is rather biased and not meant to be exhaustive. We will explain some results surrounding the notions of Cohen-Macaulay actions and equivariant basic cohomology.

Throughout the paper we try to present the material in an easily accessible way, sometimes sacrificing greater generality for simplicity of the arguments. We do not give proofs for every result, but do so whenever we were not able to find a good reference in the literature; sometimes we provide a different proof. We will assume that the reader is familiar with the theory of actions of compact Lie groups on differentiable manifolds.

In preparation of this paper a wealth of literature was helpful, such as the monographs \cite{AlldayPuppe, BeGeVe, GuilleminSternberg, Hsiang}, as well as \cite[Appendix C]{GGK} and \cite{Bott}. \\

\noindent {\it Acknowledgements.} 
Parts of this paper stem from the first named author's lectures at the University of Hamburg in 2012, and at the Philipps University of Marburg in 2018. We would like to thank the participants of these courses for their interest in the topic and their valuable comments. We are grateful to Mich\`ele Vergne for several remarks on a previous version of this paper.  We are especially indebted to Jeffrey Carlson for several enlightening discussions, as well as for a very thorough reading of a previous version and numerous suggestions that improved the presentation of this paper. The second named author is supported by the German Academic Scholarship foundation.

\section{Invariant and basic differential forms}

Let $G$ be a Lie group acting on a differentiable manifold $M$, with Lie algebra $\mfg$. We denote, for $X\in \mfg$, the induced fundamental vector field by
\[
\overline{X}_p:= \left.\frac{d}{dt}\right|_{t=0} \exp(tX)\cdot p.
\] 
\begin{defn} A differential form $\omega\in \Omega(M)$ is called \emph{$G$-invariant} if $g^*\omega=\omega$ for all $g\in G$. The space of $G$-invariant differential forms is denoted $\Omega(M)^G$. 
\end{defn}
The space $\Omega(M)^G$ is clearly invariant under the differential $d\colon\Omega(M)\to \Omega(M)$, i.e., $(\Omega(M)^G,d)$ is a subcomplex of $(\Omega(M),d)$ and we can consider its cohomology. However, if $G$ is connected and compact, this cohomology does not contain more information than the usual de Rham cohomology because of the following theorem due to \'E.\ Cartan \cite{ECartan}:
\begin{thm}\label{thm:cohomofinvariantforms}
If $G$ is a compact and connected Lie group acting on a differentiable manifold $M$, then the inclusion map $\Omega(M)^G\to \Omega(M)$ induces an isomorphism $H^*(\Omega(M)^G)\to H^*(M)$ in cohomology.
\end{thm}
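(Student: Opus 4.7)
The strategy is classical: construct an averaging operator $P\colon \Omega(M)\to \Omega(M)^G$ and show it is a chain-homotopy inverse to the inclusion $\iota\colon \Omega(M)^G\hookrightarrow \Omega(M)$.

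First, I would define $P(\omega):=\int_G g^*\omega\,d\mu(g)$ using the normalized Haar measure on $G$, which exists because $G$ is compact. A direct computation using left-invariance of $\mu$ shows $P$ lands in $\Omega(M)^G$; differentiation under the integral sign shows $P$ commutes with $d$; and $P$ fixes $\Omega(M)^G$ pointwise. Thus $P$ is a chain retraction, which alone suffices for injectivity on cohomology: if $\omega\in\Omega(M)^G$ is closed and satisfies $\omega = d\eta$ for some $\eta\in\Omega(M)$, then $\omega = P(\omega) = P(d\eta) = d(P\eta)$ with $P\eta\in\Omega(M)^G$.

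The substance of the proof lies in surjectivity, equivalently in producing a chain homotopy $H\colon \Omega^*(M)\to\Omega^{*-1}(M)$ with $P-\id = dH+Hd$. The key tool is Cartan's magic formula $\cL_{\overline X}=d\iota_{\overline X}+\iota_{\overline X}d$ for $X\in\mfg$. Differentiating the one-parameter family $t\mapsto\exp(tX)^*\omega$ and integrating yields
\[
\exp(X)^*\omega-\omega=\int_0^1 \cL_{\overline X}\bigl(\exp(tX)^*\omega\bigr)\,dt = dh_X(\omega)+h_X(d\omega),
\]
where $h_X(\omega):=\int_0^1 \iota_{\overline X}\bigl(\exp(tX)^*\omega\bigr)\,dt$. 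Since $G$ is compact and connected, $\exp\colon\mfg\to G$ is surjective (this is where connectedness enters essentially), so every $g\in G$ is realized as $\exp(X)$ for some $X$.

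The main obstacle is to combine these pointwise homotopies into a single $H$ amenable to integration against $d\mu$, since $\exp$ admits no global smooth section. I would resolve this by choosing a finite open cover $\{U_i\}$ of a full-Haar-measure subset of $G$ equipped with smooth local sections $s_i\colon U_i\to\mfg$ of $\exp$ (the set of regular values of $\exp$ is open and dense by Sard's theorem, and compactness of $G$ yields finitely many such $U_i$ covering it up to a measure-zero set), selecting a subordinate partition of unity $\{\phi_i\}$ summing to $1$ almost everywhere, and defining
\[
H(\omega):=\sum_i \int_{U_i}\phi_i(g)\, h_{s_i(g)}(\omega)\,d\mu(g).
\]
Multiplying the identity $g^*\omega-\omega = dh_{s_i(g)}(\omega)+h_{s_i(g)}(d\omega)$ by $\phi_i(g)$, integrating against $d\mu$, and summing over $i$ then yields $P(\omega)-\omega = dH(\omega)+H(d\omega)$, as desired.
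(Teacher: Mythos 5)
Your overall strategy coincides with the one the paper itself adopts (and only sketches, deferring to Onishchik): show that the averaging operator is a chain map that restricts to the identity on $\Omega(M)^G$ and is chain homotopic to the identity on $\Omega(M)$. The first half of your argument --- $P$ is a chain retraction, whence injectivity of $H^*(\Omega(M)^G)\to H^*(M)$ --- is complete and correct, and the pointwise identity $\exp(X)^*\omega-\omega=dh_X(\omega)+h_X(d\omega)$ obtained from Cartan's formula is also correct.

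The gap is in the patching step. You need finitely many open sets $U_i\subset G$, each carrying a smooth section $s_i$ of $\exp$, whose union has full Haar measure, and you justify their existence by ``compactness of $G$''. But compactness of $G$ only extracts finite subcovers from open covers of all of $G$; the section-admitting neighbourhoods cover only the set of regular values of $\exp$, which is a full-measure but in general non-compact subset, and an open cover of a non-compact set need not admit a finite subcover --- nor does a finite subfamily covering a full-measure subset automatically exist. (Also, openness of the set of regular values does not follow from Sard's theorem, since $\exp$ is not proper; for compact $G$ it is true but needs a separate argument.) If one retreats to countably many $U_i$, one must in addition control $|s_i(g)|$: the quantity $h_{s_i(g)}(\omega)$ grows linearly in $|s_i(g)|$, and preimages under $\exp$ are unbounded in $\mfg$, so without uniform bounds the sum defining $H$ need not converge. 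The standard repair avoids global sections of $\exp$ altogether: cover all of $G$ by finitely many translates $U_i=\exp(B)\cdot g_i$ of a fixed ball $B\subset\mfg$ on which $\exp$ is a diffeomorphism onto its image, write $g=\exp(X_i(g))g_i$ with $X_i$ smooth and bounded on $U_i$, so that $g^*\omega-\omega=g_i^*\bigl(\exp(X_i(g))^*\omega-\omega\bigr)+(g_i^*\omega-\omega)$; the first term is handled by your $h_X$ and a partition of unity exactly as you propose, and the second by a single fixed chain homotopy between $g_i^*$ and $\id$ coming from a smooth path joining $g_i$ to $e$ --- this is where connectedness enters, rather than through surjectivity of $\exp$.
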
 
The proof of this result can be found e.g.~in \cite[\S 9]{Onishchik}. One shows that the averaging operator $\mu\colon\Omega(M)\to \Omega(M)$ given by
\[
\mu(\omega)(v_1,\ldots,v_n):= \int_G (g^*\omega) (v_1,\ldots,v_n);
\]
is chain homotopic to the identity. Of course, if $G$ is not connected, then this inclusion does not induce an isomorphism, see Examples \ref{ex:cohomfinitequotient} and \ref{ex:cohomRPn} below.

A different type of topological information is encoded in the complex of $G$-basic differential forms.
\begin{defn}\label{defn:basicforms} Given an action of a Lie group $G$ on a smooth manifold $M$, a differential form $\omega\in \Omega(M)$ is called \emph{($G$-)horizontal} if $i_{\overline{X}}\omega=0$ for all $X\in \mfg$. It is called \emph{$G$-basic} if it is both $G$-invariant and horizontal. The space of such differential forms is denoted $\Omega_{\bas G}(M)$.
\end{defn}

Just like the $G$-invariant differential forms, also the basic differential forms comprise a subcomplex of the de Rham complex. In fact, for $\omega\in \Omega_{\bas G}(M)$, the form $d\omega$ is again (invariant and) horizontal because by the Cartan formula
$i_{\overline{X}} d\omega = \cL_{\overline{X}}\omega - di_{\overline{X}}\omega = 0$. Here, $\cL$ denotes the Lie derivative.

\begin{defn}
We define the \emph{basic cohomology} of the $G$-action to be
\[
H^*_{\bas G}(M):=H^*(\Omega_{\bas G}(M),d).
\]
\end{defn}
Recall that if the $G$-action on $M$ is free, then the orbit space $M/G$ is a smooth manifold, and the projection $\pi\colon M\to M/G$ is smooth. In general, for an arbitrary action of a compact Lie group, $M/G$ is just a topological Hausdorff space.
\begin{prop}\label{prop:basiccohomfreeaction} Consider a free action of a (not necessarily connected) compact Lie group $G$ on a smooth manifold $M$, and consider the projection $\pi\colon M\to M/G$. Then $\pi^*$ defines an isomorphism of complexes $\pi^*\colon \Omega(M/G)\to \Omega_{\bas G}(M)$. In particular, 
\[
H^*_{\bas G}(M)\cong H^*(M/G).
\]
\end{prop}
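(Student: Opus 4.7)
The plan is to use the fact that, since $G$ is compact and acts freely, the projection $\pi\colon M\to M/G$ is a principal $G$-bundle; in particular, it is a surjective submersion and admits local smooth sections. I would organize the proof into four steps.

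First, I would check that $\pi^*$ actually lands in $\Omega_{\bas G}(M)$ and is a chain map. Invariance is immediate from $\pi\circ g=\pi$ for all $g\in G$. Horizontality follows from the fact that fundamental vector fields are tangent to the fibers of $\pi$, so $\pi_*\overline{X}=0$ and hence $i_{\overline X}\pi^*\omega=\pi^*(i_{\pi_*\overline X}\omega)=0$. Commutation with $d$ is standard.

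Second, $\pi^*$ is injective because $\pi$ is a surjective submersion: if $\pi^*\omega=0$ then, evaluating at any $p\in M$ on preimages of tangent vectors in $T_{\pi(p)}(M/G)$, we get $\omega_{\pi(p)}=0$.

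The real work is surjectivity, i.e., constructing a preimage $\tilde\omega\in\Omega(M/G)$ of a given $\omega\in\Omega_{\bas G}(M)$. Pointwise I would set
\[
\tilde\omega_{[p]}(v_1,\dots,v_k):=\omega_p(\tilde v_1,\dots,\tilde v_k),
\]
where $\tilde v_i\in T_pM$ is any lift with $\pi_*\tilde v_i=v_i$. Well-definedness has two parts. Independence of the lift at a fixed $p$: any two lifts differ by an element of $\ker\pi_{*,p}$, which is spanned by the values of fundamental vector fields at $p$; horizontality of $\omega$ then gives equality. Independence of the representative in the orbit: using the lifts $g_*\tilde v_i$ at $g\cdot p$ one computes $\omega_{g\cdot p}(g_*\tilde v_1,\dots,g_*\tilde v_k)=(g^*\omega)_p(\tilde v_1,\dots,\tilde v_k)=\omega_p(\tilde v_1,\dots,\tilde v_k)$ by $G$-invariance of $\omega$.

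The main obstacle is smoothness of $\tilde\omega$, which is where I would need to invoke the principal bundle structure. Locally on $M/G$ choose a smooth section $s\colon U\to M$ of $\pi$; then by construction $\tilde\omega|_U=s^*\omega$, which is a smooth form on $U$. Since smoothness is local, $\tilde\omega\in\Omega(M/G)$. By construction $\pi^*\tilde\omega=\omega$, so $\pi^*$ is surjective. Together with injectivity and the chain property, $\pi^*$ is an isomorphism of complexes, and the cohomology statement $H^*_{\bas G}(M)\cong H^*(M/G)$ follows immediately.
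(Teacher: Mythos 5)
Your proof is correct and follows essentially the same route as the paper's: show that $\pi^*$ lands in the basic forms, then construct the inverse map by lifting tangent vectors, using horizontality and $G$-invariance for well-definedness. You are in fact somewhat more careful than the paper, which does not explicitly address injectivity of $\pi^*$ or the smoothness of the constructed form on $M/G$ (your use of local sections of the principal bundle handles the latter cleanly).
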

\begin{proof} If $\omega\in \Omega(M/G)$, then $\pi^*\omega$ is $G$-invariant because for any $g\in G$ we have
\[
g^*\pi^*\omega = (\pi\circ g)^*\omega = \pi^*\omega.
\]
At each $p\in M$, we have $\ker d\pi_p = T_pG\cdot p$. Thus, $\pi^*\omega$ is horizontal as well. 

If conversely $\eta$ is a $G$-basic $k$-form on $M$, then we can define a $k$-form $\omega$ on $M/G$ as follows: if $v_1,\ldots,v_k$ are tangent vectors at $Gp\in M/G$, then let $w_1,\ldots,w_k$ be tangent vectors at $p\in M$ such that $d\pi_p(w_i)=v_i$, and define
\[
\omega(v_1,\ldots,v_k)=\eta(w_1,\ldots,w_n)
\]
This is independent of both the choice of $p$ and the $w_i$ because $\eta$ is $G$-invariant and horizontal. Clearly, we have $\pi^*\omega = \eta$.
\end{proof}
\begin{ex}\label{ex:cohomfinitequotient}
Consider a finite group $G$ acting freely on a smooth manifold $M$. Then being $G$-basic, for a differential form $\omega$ on $M$, is the same as being $G$-invariant. So in this case $\pi^*\colon \Omega(M/G) \to \Omega(M)^G$ is an isomorphism of complexes, so that $H^*(M/G) = H^*(\Omega(M)^G)$.

On the other hand, we have a well-defined action of $G$ on cohomology: for $g\in G$ and $[\omega]\in H^*(M)$, we put $g^*[\omega] := [g^*\omega]$. Then the inclusion $\Omega(M)^G\to \Omega(M)$ induces an injective homomorphism $H^*(\Omega(M)^G) \to H^*(M)$ whose image lies in the $G$-invariant cohomology:
\[
i_*\colon H^*(\Omega(M)^G)\longrightarrow H^*(M)^G.
\]
We claim that this map is indeed surjective: let $[\omega]\in H^*(M)^G$, i.e., that for all $g\in G$ there exists $\eta_g\in \Omega(M)$ such that $g^*\omega = \omega + d\eta_g$. But then the average $\frac1{|G|} \sum_{g\in G} g^*\omega$ is a $G$-invariant form whose cohomology class is sent by $i_*$ to $[\omega]$. In total, we obtain isomorphisms
\[
H^*(M/G) \longrightarrow H^*(\Omega(M)^G) \longrightarrow H^*(M)^G.
\]
\end{ex}
\begin{ex}\label{ex:cohomRPn}
Let us give a concrete example: we consider the free $\ZZ_2$-action on the $n$-dimensional sphere $S^n$ given by sending a point to its antipodal map, with orbit space the real projective space $\RR P^n$. To understand the cohomology of $\RR P^n$ we therefore only have to understand the effect of the map $f(x)=-x$ on a volume form of $S^n$. A volume form on $S^n$ is given by 
\begin{align*}
\omega_{(x_1,\ldots,x_{n+1})}&=i_{(x_1,\ldots,x_{n+1})} (dx_1\wedge \cdots \wedge dx_{n+1}),
\end{align*}
and as the radial vector field is invariant under $f$, it follows that $f^*\omega = \omega$ for odd $n$, and $f^*\omega = -\omega$ for even $n$. The action of $\ZZ_2$ on $H^n(S^n)=\RR\cdot [\omega]$ is thus trivial for odd $n$, and given by multiplication with $-1$ for even $n$, whence
\[
H^n(\RR P^n) = H^n(S^n)^{\ZZ_2} = \begin{cases} 0 & n \text{ even,} \\ \RR & n \text{ odd}. \end{cases}
\]
\end{ex}
\begin{rem}\label{rem:basicsingularcohom}
We even have $H^*_{\bas G}(M)=H^*(M/G)$ for any action of a compact Lie group $G$, where the right hand side is understood as the singular cohomology of $M/G$. As singular cohomology is not the focus of these notes, we only refer to \cite[Theorem 30.36]{Michor} for the proof. This tells us that $H^*_{\bas G}(M)$ is, in many cases, not a very powerful invariant for group actions. For instance, there exist many nontrivial group actions for which the orbit space $M/G$ is contractible, so that $H^*_{\bas G}(M)=\RR$, e.g., the standard action of $S^1$ on $S^2$ by rotation. For free actions, however, the orbit space is again a manifold, so basic cohomology of free actions is an invariant as powerful as de Rham cohomology for manifolds.
\end{rem}

\section{The coadjoint representation}\label{sec:Coadjoint}
Any Lie group $G$ acts on its Lie algebra by the adjoint representation. This is defined as follows: for any $g\in G$ conjugation with $g$ is denoted
\[
c_g\colon G\longrightarrow G;\,  h\longmapsto ghg^{-1}. 
\]
Differentiating this at $e$, we obtain a map $\Ad_g\colon \mfg\cong T_eG\to T_eG\cong \mfg$ given by $\Ad_g:=(dc_g)_e$. In this way we obtain a homomorphism
\[
\Ad\colon G\longrightarrow \GL(\mfg)
\]
which we call the \emph{adjoint representation} of $G$.

Dualizing this representation, we obtain the \emph{coadjoint representation} of $G$ on the dual vector space $\mfg^*$ (which consists of linear forms $\xi\colon \mfg\to \RR$):
\[
(\Ad_g^*\xi) (X):=\xi(\Ad_{g^{-1}}(X))
\]
We denote by $S(\mfg^*)$ the symmetric algebra on $\mfg^*$, which we consider as the algebra of polynomials on $\mfg$. The coadjoint representation naturally extends to $S(\mfg^*)$ via  $(\Ad_g^*f)(X):=f(\Ad_{g^{-1}}X)$.
Of particular importance will be the subspace of $G$-invariant polynomials $S(\mfg^*)^G$, i.e., those polynomials that are constant along adjoint orbits in $\mfg$.

For compact and connected $G$, the ring of invariant polynomials is again a polynomial ring: Chevalley's restriction theorem, see \cite[Chapitre VIII, \S 8.3, Th\'eor\`eme 1]{Bourbaki}, \cite[Theorem 4.9.2]{Varadarajan}  or \cite{DufloVergne} (it was mentioned by Chevalley without proof in \cite[Section IV]{ChevalleyProceedings}), states that the restriction map
\[
S(\mfg^*)^G\longrightarrow S(\mft^*)^{W(G)},
\]
where $T\subset G$ is a maximal torus and $W(G)$ the corresponding Weyl group, is an isomorphism. (See \cite[Proposition 30]{DufloVergne} for an explicit description of the inverse map.) Here, we define the Weyl group as the finite group $N_G(T)/T$, where $N_G(T) = \{g\in G\mid gTg^{-1}=G\}$ is the normalizer of $T$ in $G$.  As the Weyl group acts on $\mft^*$ as a reflection group (it coincides with the algebraically defined Weyl group of the root system of $\mfg^\CC$, see \cite[Theorem IV.4.54]{Knapp}), the Chevalley-Shephard-Todd theorem \cite[Section 18-1]{Kane} states that the ring of invariants $S(\mft^*)^{W(G)}$ is a polynomial $\RR$-algebra. 

\begin{ex}\label{ex:invpolyUn}
Consider $G=\U(n)$, with maximal torus $T$ given by diagonal matrices, and corresponding Weyl group $S_n$, acting by permutations on the diagonal entries of $\mft$. Then $S(\mfg^*)^G \cong S(\mft^*)^{W(\U(n))}$ is the algebra of symmetric polynomials in $n$ variables, which is  the polynomial algebra $\RR[\sigma_1,\ldots,\sigma_n]$, generated by the elementary symmetric polynomials $\sigma_i$ of degree $i$. A direct proof of Chevalley's restriction theorem for the case $G=\U(n)$ can be found in \cite[Example C.13]{GGK}.
\end{ex}

\begin{ex}
For a disconnected compact Lie group $G$, the $G$-invariant polynomials do not necessarily form a polynomial ring. Consider, for example, the semidirect product $G= T^2\rtimes_\varphi \ZZ_2$, where $\varphi(1)$ acts as the inverse map on $T^2$. Then $S(\mfg^*)^G = \RR[x,y]^{\ZZ_2}$, where $\ZZ_2$ acts on $x$ and $y$ by $\pm 1$, which is the algebra of polynomials in $x$ and $y$ of even degree. This is not a polynomial ring, because any generating set necessarily contains scalar multiples of $x^2,y^2$ and $xy$, and we have the relation $(xy)^2 = x^2y^2$.
\end{ex}

\section{The Cartan model}\label{sec:CartanModel}
In this section we introduce H.\ Cartan's definition of equivariant cohomology \cite{Cartan1}, \cite{Cartan2}. Let $G$ be a compact Lie group acting on a differentiable manifold $M$. We define the space of \emph{equivariant differential forms on $M$} as 
\[
C_G(M):=(S(\mfg^*)\otimes \Omega(M))^G.
\]
Here, the superscript denotes taking the subspace of $G$-invariant objects, where $S(\mfg^*)\otimes \Omega(M)$ is endowed with the tensor product representation: $G$ acts on $S(\mfg^*)$ by the coadjoint representation described in the previous subsection and on $\Omega(M)$ by pull-back, i.e., the following representation: 
\[
g\cdot \omega:=(g^{-1})^*\omega.
\]
An equivariant differential form $\omega\in S(\mfg^*)\otimes \Omega(M)$ can be written as a finite sum
\[
\omega = \sum_i f_i\otimes \eta_i,
\]
for $f_i\in S(\mfg^*)$ and $\eta_i\in \Omega(M)$. By abuse of notation, we will also denote the associated polynomial map $\mfg\to \Omega(M);\, X\mapsto \sum_i f_i(X)\cdot \eta_i$ by $\omega$. Almost by definition, the $G$-invariance of the element $\omega\in S(\mfg^*)\otimes \Omega(M)$ translates to the equivariance of the polynomial map $\omega\colon \mfg\to \Omega(M)$, i.e., to the condition
\begin{equation}\label{eq:Ad-invariantpolynomial}
\omega(\Ad_g(X))=g\cdot (\omega(X)) = (g^{-1})^*(\omega(X))
\end{equation}
for all $g\in G$ and $X\in \mfg$. We think of $C_G(M)$ as the space of $G$-equivariant polynomial maps $\mfg\to \Omega(M)$.

\begin{rem}
If $G=T$ is a torus, then the (co)adjoint action of $T$ is trivial, so $C_T(M) = S(\mft^*)\otimes \Omega(M)^T$. A $T$-equivariant differential form is nothing but a polynomial $\omega\colon \mft\to \Omega(M)^T$.
\end{rem}

Sometimes it is convenient to write equivariant differential forms in a basis: given a basis $\{X_i\}$ of the Lie algebra $\mfg$, with dual basis $\{u_i\}$ of $\mfg^*$, we can write an equivariant differential form $\omega\in C_G(M)$ as a finite sum
\begin{equation}\label{eq:localdescriptionofeqdiffform}
\omega = \omega_\emptyset + \sum_i \omega_i u_i + \sum_{i\leq j} \omega_{ij} u_iu_j + \cdots = \sum_I \omega_I u_I,
\end{equation}
where $I$ runs over a finite set of multiindices.

There is a natural $S(\mfg^*)^G$-algebra structure on $C_G(M)$: first of all note that $C_G(M)$ is a ring with respect to the multiplication
\[
(\omega\wedge \eta)(X):=\omega(X)\wedge \eta(X),
\]
where $\omega$ and $\eta$ are considered as polynomials $\mfg\to \Omega(M)$. In other words, we give $C_G(M)$ the ring structure from the tensor product of the rings $S(\mfg^*)$ and $\Omega(M)$. The $S(\mfg^*)^G$-algebra structure is defined by the ring homomorphism
\begin{equation}\label{eq:algebrastroneqc}
i\colon S(\mfg^*)^G\to (S(\mfg^*)\otimes \Omega(M))^G = C_G(M);\, f\mapsto f\otimes 1.
\end{equation}
As a polynomial $\mfg\to \Omega(M)$, the equivariant differential form $f\otimes 1$ is $(f\otimes 1)(X)=f(X)$, where the real number $f(X)$ is regarded as a constant function on $M$.

\begin{defn}\label{def:equivariantdifferential}
We define the \emph{equivariant differential} $d_G$ on $S(\mfg^*)\otimes\Omega(M)$ by
\[
d_G(\omega)(X)=d(\omega(X)) - i_{\overline{X}} \omega(X).
\]
\end{defn}
\begin{rem}
There are various sign conventions in the literature. Some authors use $+$ instead of $-$ in this definition; also, some authors use a sign in the definition of the fundamental vector field $\overline{X}$, to make the assignment $X\mapsto \overline{X}$ a Lie algebra homomorphism.
\end{rem}

One directly verifies that $d_G$ maps $C_G(M)$ to itself.  It is useful to write the equivariant differential $d_G\omega$ in case $\omega$ is given explicitly as in \eqref{eq:localdescriptionofeqdiffform}:
\begin{lemma}\label{lem:d_Gexplicit} If $\omega= \sum_I \omega_Iu_I\in S(\mfg^*)\otimes \Omega(M)$, then
\begin{equation}\label{eq:d_Gexplicit}
d_G\omega = \sum_I (d\omega_I -\sum_i i_{\overline{X_i}}\omega_I u_i)u_I.
\end{equation}
\end{lemma}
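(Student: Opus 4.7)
The plan is to proceed by linearity and verify the formula on a single summand $\omega_I u_I$, where $\omega_I \in \Omega(M)$ and $u_I = u_{i_1}\cdots u_{i_k}$ is a monomial in the dual basis. Once the formula is established on such an element, summation over the multiindices $I$ yields the general statement. So the question reduces to computing $d_G(\omega_I u_I)$ from Definition \ref{def:equivariantdifferential} and comparing it with the right-hand side of \eqref{eq:d_Gexplicit}.

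First, I would translate $\omega_I u_I$ into its incarnation as a polynomial map $\mfg\to \Omega(M)$. If $X = \sum_j x_j X_j \in \mfg$, then $u_j(X) = x_j$, and so $(\omega_I u_I)(X) = u_I(X)\cdot \omega_I$, a scalar multiple of the differential form $\omega_I$. Applying Definition \ref{def:equivariantdifferential}, I get
\[
d_G(\omega_I u_I)(X) = u_I(X)\, d\omega_I - u_I(X)\, i_{\overline{X}}\omega_I.
\]
The first term is manifestly the polynomial map associated to $(d\omega_I)\, u_I \in S(\mfg^*)\otimes \Omega(M)$.

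For the second term, the key point is that the assignment $X\mapsto \overline{X}$ is $\RR$-linear, hence $i_{\overline{X}}\omega_I = \sum_j x_j\, i_{\overline{X_j}}\omega_I = \sum_j u_j(X)\, i_{\overline{X_j}}\omega_I$ and therefore
\[
u_I(X)\, i_{\overline{X}}\omega_I \;=\; \sum_j u_j(X) u_I(X)\, i_{\overline{X_j}}\omega_I.
\]
This is precisely the polynomial map associated to $\sum_j (i_{\overline{X_j}}\omega_I)\, u_j u_I \in S(\mfg^*)\otimes \Omega(M)$. Assembling the two pieces gives
\[
d_G(\omega_I u_I) = (d\omega_I)\, u_I - \sum_j (i_{\overline{X_j}}\omega_I)\, u_j u_I,
\]
which agrees with the summand for $I$ in \eqref{eq:d_Gexplicit}. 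Summing over $I$ concludes the proof.

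There is no real obstacle: the argument is a direct unwinding of the definitions, with the only mild subtlety being the careful bookkeeping of the correspondence between elements of $S(\mfg^*)\otimes \Omega(M)$ and the polynomial maps $\mfg\to \Omega(M)$ they represent, together with the $\RR$-linearity of $X\mapsto \overline{X}$ used to expand the interior product.
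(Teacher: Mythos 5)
Your proof is correct and rests on exactly the same key observation as the paper's (one-line) proof, namely that $X=\sum_i u_i(X)X_i$ and the linearity of $X\mapsto \overline{X}$ give $i_{\overline{X}}=\sum_i u_i(X)\, i_{\overline{X_i}}$; you have simply unwound the definitions in more detail. No issues.
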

\begin{proof}
We only need to observe that for $X\in \mfg$, we have $X=\sum_i u_i(X) X_i$, so that $i_{\overline{X}} = \sum_i u_i(X) i_{\overline{X_i}}$.
\end{proof}

Let us introduce a grading on $C_G(M)$. For any integer $n\geq 0$ we define the space of equivariant differential forms of degree $n$ as
\[
C_G^n(M):=\bigoplus_{2k+l=n} (S^k(\mfg^*)\otimes \Omega^l(M))^G.
\]
An element of $C_G^n(M)$ will be called an \emph{equivariant differential form of degree $n$}. Note that the ring structure of $C_G(M)$ is graded in the sense that the product of elements in degree $n$ and $m$ is of degree $n+m$.
\begin{rem} If $\omega=\sum_I \omega_I u_I$ is an equivariant differential form as in \eqref{eq:localdescriptionofeqdiffform}, then it is of degree $n$ if and only if for every $I=(i_1,\ldots,i_r)$ the differential form $\omega_I$ is of degree $n-2(i_1+\cdots+i_r)$.
\end{rem}

In the following proposition we collect a few properties of the equivariant differential. We omit the straightforward proofs. The first item is the reason for our choice of grading on $C_G(M)$.
\begin{prop}
\begin{enumerate}
\item $d_G$ maps $C_G^n(M)$ to $C_G^{n+1}(M)$.
\item For $\omega\in C_G^n(M)$ and $\eta\in C_G^m(M)$ we have
\[
d_G(\omega\wedge\eta)=(d_G\omega)\wedge \eta + (-1)^n \omega\wedge (d_G\eta).
\]
\item $d_G^2=0$.
\end{enumerate}
\end{prop}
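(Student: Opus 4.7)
The plan is to check each of the three items by evaluating the equivariant differential pointwise, i.e., viewing an equivariant form as a $G$-equivariant polynomial map $\mfg \to \Omega(M)$ and exploiting the identity $(d_G\omega)(X) = (d - i_{\overline{X}})\omega(X)$. Reducing to pure tensors $\omega = f\otimes\alpha$ with $f\in S^k(\mfg^*)$, $\alpha\in\Omega^l(M)$ will let me keep form degrees under control and will isolate why the grading $|\omega| = 2k+l$ is the correct one.

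For item (1), I would simply invoke Lemma \ref{lem:d_Gexplicit}: if $\omega = \sum_I \omega_I u_I$ has degree $n$, then each summand $\omega_I$ has form degree $n - 2|I|$, so $d\omega_I \cdot u_I$ contributes degree $n+1$, and $i_{\overline{X_i}}\omega_I \cdot u_i u_I$ contributes $(n-2|I|-1) + 2(|I|+1) = n+1$ as well. For item (2), I would check the Leibniz rule pointwise on pure tensors: evaluating $(d_G(\omega\wedge\eta))(X) = (d - i_{\overline{X}})(\omega(X)\wedge\eta(X))$ and applying that both $d$ and $i_{\overline{X}}$ are graded derivations of $\Omega(M)$ of odd degree produces the sign $(-1)^l$, where $l$ is the \emph{form} degree of $\omega(X)$. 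Since $\omega$ has total degree $n = 2k + l$ on a pure element, $(-1)^n = (-1)^l$, and the claimed Leibniz rule falls out after regrouping.

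Item (3) is the step I expect to be the substantive one, because the naive pointwise computation gives
\[
(d_G^2\omega)(X) = (d - i_{\overline{X}})^2 \omega(X) = -(d\, i_{\overline{X}} + i_{\overline{X}}\, d)\omega(X) = -\cL_{\overline{X}} \omega(X)
\]
by Cartan's magic formula, which is not identically zero on $\Omega(M)$. The point is that $\omega$ is \emph{invariant}, so the map $X \mapsto \omega(X)$ is equivariant: $\omega(\Ad_g X) = (g^{-1})^*\omega(X)$. Differentiating this identity at $g=e$ in the direction $X$ itself yields $\omega'(X)(\ad_X X) = -\cL_{\overline{X}}\omega(X)$, and since $\ad_X X = 0$, the right-hand side vanishes. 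Thus $\cL_{\overline{X}}\omega(X) = 0$ for every $X \in \mfg$, and $d_G^2\omega = 0$. The main obstacle is therefore nothing computational but rather remembering to use equivariance (i.e., that we are working on $C_G(M)$ and not on all of $S(\mfg^*)\otimes\Omega(M)$), which is precisely where the extra piece of structure gets spent.
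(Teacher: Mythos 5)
Your proof is correct. The paper omits the argument as ``straightforward,'' and what you give is precisely the intended verification: the degree count via Lemma \ref{lem:d_Gexplicit} for (1), the fact that $d$ and $i_{\overline{X}}$ are odd derivations together with $(-1)^{2k+l}=(-1)^l$ for (2), and for (3) the identity $(d-i_{\overline{X}})^2=-\cL_{\overline{X}}$ combined with the observation that differentiating the equivariance relation $\omega(\Ad_gX)=(g^{-1})^*\omega(X)$ along $g=\exp(tX)$ kills $\cL_{\overline{X}}\omega(X)$ because $[X,X]=0$ --- which is indeed the one place where invariance is genuinely used.
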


If $d_G\omega=0$, then we say that $\omega$ is \emph{equivariantly closed}, and a form of the type $d_G\eta$ is \emph{equivariantly exact}.

\begin{defn} The \emph{equivariant cohomology} of the $G$-action on $M$ is defined as 
$H^*_G(M):=H^*(C_G^*(M),d_G)$.
\end{defn}

The ring structure of $C_G(M)$ passes over to $H^*_G(M)$, and the ring homomorphism $i$ in \eqref{eq:algebrastroneqc} induces a well-defined homomorphism of graded rings $i\colon S(\mfg^*)^G\to H^*_G(M)$. Thus, via $i$, the ring $H^*_G(M)$ becomes naturally a graded $S(\mfg^*)^G$-algebra, in the sense that the ring homomorphism $i$ respects the degree. In what follows, it will be extremely important to distinguish between this $S(\mfg^*)^G$-algebra structure on $H^*_G(M)$ and the induced structure as an $S(\mfg^*)^G$-module.

\begin{rem}\label{rem:Borelmodel}
There are other ways to introduce equivariant cohomology, most prominently the so-called Borel model, introduced first in \cite{Borel}, which we now briefly explain.   As was mentioned above in Remark \ref{rem:basicsingularcohom}, we consider the cohomology of the orbit space a reasonable invariant for free actions. In case of an arbitrary action on a topological space $X$, one now replaces the space $X$ acted on by a homotopy equivalent space with a free $G$-action, namely by
\[
EG\times X,
\]
where $EG$ is a contractible space on which $G$ acts freely. Then, one defines the equivariant cohomology (with coefficients $R$) as the cohomology of the orbit space of the diagonal action:
\[
H^*_G(X;R) := H^*(EG\times_G G;R).
\]
It admits the structure of a $H^*(BG;R)$-algebra, via the natural projection $EG\times_G X\to EG/G=:BG$. The equivariant de Rham theorem \cite{Cartan1}, \cite{Cartan2}, see also \cite[Section 2.5]{GuilleminSternberg}, states that for manifolds and real coefficients, this Borel cohomology is isomorphic to the equivariant cohomology defined above. A further important model for equivariant cohomology is the Weil model. See \cite{Meinrenken} for a short overview of these models.
\end{rem}

\begin{ex}\label{ex:trivialaction} Let us consider an easy, yet very important example: that of a trivial $G$-action on a manifold $M$. In this case, any differential form on $M$ is automatically $G$-invariant, so we have
\[
C_G(M)=S(\mfg^*)^G\otimes \Omega(M).
\]
All induced vector fields $\overline{X}$ are trivial, so the equivariant differential $d_G$ is nothing but the ordinary differential: $(d_G\omega)(X)=d(\omega(X))$. This means that the complex $(C_G(M),d_G)$ is obtained from the ordinary de Rham complex $(\Omega(M),d)$ by tensoring with $S(\mfg^*)^G$. Therefore, we have an $S(\mfg^*)^G$-algebra isomorphism
\begin{equation}\label{eq:H*trivial}
H^*_G(M) \cong S(\mfg^*)^G \otimes H^*(M),
\end{equation}
where $S(\mfg^*)^G$ acts only on the first factor of the right hand side. 
In particular, $H^*_G(M)$ is a free module over $S(\mfg^*)^G$. Particularly important is the case where $M$ consists of a single point: we have $H^*_G({\mathrm{pt}}) = S(\mfg^*)^G$.

Later we will encounter classes of actions for which \eqref{eq:H*trivial} holds, but just as an isomorphism of $S(\mfg^*)^G$-modules.
\end{ex}

One shows directly that any $G$-equivariant map $f\colon M\to N$ between $G$-manifolds $M$ and $N$ induces a pullback homomorphism between the Cartan complexes by $(f^*\omega)(X) = f^*(\omega(X))$ which descends to an $S(\mfg^*)^G$-algebra morphism $f^*\colon H^*_G(N)\to H^*_G(M)$. Then the following lemma follows directly from the definitions:
\begin{lemma}\label{lem:algebrastructurefrommaptopoint} The $S(\mfg^*)^G$-algebra structure $i\colon S(\mfg^*)^G\to H^*_G(M)$ is the same as the map in cohomology induced by the unique map $M\to \{pt\}$.
\end{lemma}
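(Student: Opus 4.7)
The plan is to unpack both sides of the claimed identification and observe that they agree already at the cochain level; no real obstacle is expected, as this amounts essentially to a careful bookkeeping of definitions.

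First I would identify the Cartan complex of the one-point space. Since $\Omega(\{\mathrm{pt}\}) = \RR$ is concentrated in degree $0$ and carries the trivial $G$-action, we have
\[
C_G(\{\mathrm{pt}\}) = (S(\mfg^*) \otimes \Omega(\{\mathrm{pt}\}))^G = S(\mfg^*)^G.
\]
Moreover all fundamental vector fields on a point vanish and the de Rham differential on $\Omega(\{\mathrm{pt}\})$ is zero, so by Lemma \ref{lem:d_Gexplicit} (or directly from Definition \ref{def:equivariantdifferential}) the equivariant differential $d_G$ is zero on $C_G(\{\mathrm{pt}\})$. Hence $H^*_G(\{\mathrm{pt}\}) = S(\mfg^*)^G$, with the tautological $S(\mfg^*)^G$-algebra structure.

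Next I would trace through the pullback under the unique map $p\colon M\to \{\mathrm{pt}\}$. For $f\in S(\mfg^*)^G$, viewed as a polynomial $\mfg\to \Omega(\{\mathrm{pt}\})=\RR$, the defining formula $(p^*\omega)(X) = p^*(\omega(X))$ gives
\[
(p^*f)(X) = p^*\!\bigl(f(X)\bigr) = f(X)\cdot 1_M,
\]
the constant function $f(X)$ on $M$. But this is exactly $(f\otimes 1)(X) = i(f)(X)$, by the description of $i$ given in \eqref{eq:algebrastroneqc} and the paragraph following it. Thus $p^* = i$ already as ring homomorphisms $S(\mfg^*)^G = C_G(\{\mathrm{pt}\})\to C_G(M)$, and therefore they induce the same map $S(\mfg^*)^G = H^*_G(\{\mathrm{pt}\})\to H^*_G(M)$ in cohomology. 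This is the assertion of the lemma.
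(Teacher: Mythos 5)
Your proof is correct and is exactly the unpacking of definitions that the paper has in mind when it says the lemma ``follows directly from the definitions'' (the paper offers no further argument). Identifying $C_G(\{\mathrm{pt}\})=S(\mfg^*)^G$ with vanishing differential and checking $(p^*f)(X)=f(X)\cdot 1_M=(f\otimes 1)(X)$ at the cochain level is the intended reasoning.
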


Let us have a look at the zeroth and first equivariant cohomology groups.
\begin{ex}
We have $C_G^0(M) = \Omega^0(M)^G$, the space of $G$-invariant smooth functions $f\colon M\to \RR$. For such $f$, the equivariant differential computes as $d_Gf = df$, and therefore, closed equivariant $0$-forms are locally constant invariant functions. Hence, $H^0_G(M)=H^0(M/G)$ calculates the number of connected components of $M/G$. (In case $G$ is connected, this coincides with the number of connected components of $M$.)
\end{ex}

\begin{ex}\label{ex:equivoneforms} We have $C_G^1(M)=\Omega^1(M)^G$. For $\omega\in \Omega^1(M)^G$, the equivariant differential computes as
\[
(d_G\omega)(X)=d\omega - i_{\overline{X}}\omega
\]
($\omega$ is considered as a constant map $\mfg\to \Omega(M);\, X\mapsto \omega$). Therefore, $d_G\omega=0$ if and only if $d\omega=0$ and $i_{\overline{X}}\omega=0$ for all $X\in \mfg$, i.e., if $\omega$ is a closed basic form. We have computed $C_G^0(M)$ above, which implies that the exact equivariant one-forms are the same as the exact basic one-forms. This shows $H^1_G(M) = H^1_{\bas G}(M)$.
\end{ex}

There is the following relation between basic and equivariant cohomology:

\begin{lemma} The ring homomorphism $\Omega_{\bas G}(M)\to C_G(M);\, \omega\mapsto 1\otimes \omega$ is an inclusion of complexes and therefore defines a homomorphism of $\RR$-algebras $H^*_{\bas G}(M)\to H^*_G(M)$.
\end{lemma}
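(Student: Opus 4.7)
The plan is to verify four routine items in order: (i) that $1\otimes\omega$ genuinely lies in $C_G(M)$ for every $\omega\in \Omega_{\bas G}(M)$, (ii) that the map is a ring homomorphism, (iii) that it is injective, and (iv) that it intertwines the differentials. Passage to cohomology is then automatic, and the grading is preserved simply because $1\in S^0(\mfg^*)$, so a basic $k$-form is sent to an element of $C_G^k(M)$.

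First I would check that the image lands in the $G$-invariants. The constant polynomial $1\in S(\mfg^*)$ is fixed by the coadjoint representation, so for any $g\in G$,
\[
g\cdot (1\otimes \omega) = 1 \otimes (g^{-1})^*\omega = 1\otimes \omega,
\]
where the last equality uses $G$-invariance of $\omega$ (which is part of being basic). Notice that only invariance, not horizontality, is needed at this stage. The ring homomorphism property is immediate from the fact that the multiplication on $C_G(M)$ is the tensor product of the ring structures of $S(\mfg^*)$ and $\Omega(M)$, so $(1\otimes \omega)\wedge (1\otimes \eta) = 1\otimes (\omega\wedge \eta)$. Injectivity is clear because tensoring with the nonzero element $1$ is injective.

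The content of the lemma lies in the chain-map condition, and this is exactly where horizontality enters. Viewing $1\otimes \omega$ as the constant polynomial map $\mfg\to \Omega(M)$, $X\mapsto \omega$, Definition \ref{def:equivariantdifferential} gives
\[
d_G(1\otimes \omega)(X) = d\omega - i_{\overline{X}}\omega.
\]
Horizontality of $\omega$ means $i_{\overline{X}}\omega=0$ for every $X\in \mfg$, and therefore $d_G(1\otimes \omega) = 1\otimes d\omega$, as required. No step poses a genuine obstacle; the lemma essentially records that the definition of basic forms has been tailored precisely so that the inclusion into the Cartan complex is a chain map, with the equivariant differential reducing to the ordinary de Rham differential on the image.
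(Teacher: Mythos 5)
Your proposal is correct and follows essentially the same route as the paper's proof: well-definedness from $G$-invariance, the ring-homomorphism property from the tensor-product multiplication, and the chain-map identity $d_G(1\otimes\omega)=1\otimes d\omega$ from horizontality killing the $i_{\overline{X}}$ term. You simply spell out a few steps (the explicit invariance computation, injectivity, preservation of the grading) that the paper dispatches with ``clearly.''
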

\begin{proof}
First of all note that $\omega= 1\otimes \omega\in S(\mfg^*)\otimes \Omega(M)$ really is an equivariant differential form because $\omega$ is $G$-invariant. Therefore, the map is well-defined. Clearly, it is an $\RR$-algebra homomorphism. Moreover, we have $d_G(\omega) =  d\omega$ because $\omega$ is horizontal, so it is a map between complexes. 
\end{proof}

\begin{ex}\label{ex:basicequivcohomrelation} 
In general the natural map $H^*_{\bas G}(M)\to H^*_G(M)$ is neither injective nor surjective. Non-surjectivity is clear, as the basic cohomology always vanishes for degrees above the dimension of $M/G$, whereas $H^*_G(M)$ is in general nonzero in infinitely many degrees -- see for instance Example \ref{ex:trivialaction}. In degree $1$, the map is an isomorphism (see Example \ref{ex:equivoneforms}), and in degree $2$ it is always injective: assuming that $\omega = d_G\alpha$, for a closed basic $2$-form $\omega$ and some $\alpha\in C^1_G(M) = \Omega^1(M)^G$, we have
\[
\omega = (d_G\alpha)(X) = d\alpha - i_{\overline{X}}\alpha.
\]
This implies that $i_{\overline{X}}\alpha = 0$ for all $X\in \mfg$, which, together with the $G$-invariance of $\alpha$ says that $\alpha$ is $G$-basic, and thus $d\alpha = \omega$ in $\Omega_{\bas G}(M)$.

The smallest degree in which non-injectivity can occur is $3$, see \cite[Example C.18]{GGK}: consider, on the $4$-sphere
\[
S^4=\{(a,z,w)\mid a^2 + |z|^2 + |w|^2=1\}\subset  \RR\times \CC^2\cong \RR^5
\]
the circle action given by the product of the standard diagonal action on $\CC^2$ and the trivial action on $\RR$. Then one computes (using the equivariant Mayer-Vietoris sequence, Theorem \ref{thm:equivMV} below) that $H^3_{S^1}(S^4)=0$. On the other hand, $H^3_{\bas S^1}(S^4)=\RR$: either using Remark \ref{rem:basicsingularcohom}, by observing that the action is the suspension of the Hopf action on $S^3$, so that the orbit space is homeomorphic to the suspension of $S^2$, which is $S^3$. Alternatively, if one would like to avoid using singular cohomology, one can use basic versions of the Mayer-Vietoris sequence and the homotopy axiom.
\end{ex}

There is also a natural map from equivariant to ordinary de Rham cohomology:
\begin{lemma} The ring homomorphism $\Omega_G(M)\to \Omega(M);\, \omega\mapsto \omega(0)$ is a chain map and therefore defines a homomorphism of $\RR$-algebras $H^*_G(M)\to H^*(M)$.
\end{lemma}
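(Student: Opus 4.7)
The proof should be essentially a direct unwinding of definitions; the map $\omega \mapsto \omega(0)$ is very well-behaved with respect to all the structure on $C_G(M)$, so there is no genuine obstacle — the main task is just to check three formalities in the right order.

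First I would clarify what the map does in the basis notation of \eqref{eq:localdescriptionofeqdiffform}: if $\omega = \sum_I \omega_I u_I$, then evaluation at $0 \in \mfg$ kills every $u_I$ with $|I| \geq 1$, so $\omega(0) = \omega_\emptyset \in \Omega(M)$. In particular, if $\omega$ lies in the degree-$n$ subspace $C_G^n(M) = \bigoplus_{2k+\ell=n}(S^k(\mfg^*) \otimes \Omega^\ell(M))^G$, then only the summand with $k=0$ and $\ell = n$ contributes to $\omega(0)$, which shows that the map respects the grading and lands in $\Omega^n(M)$. Next, the ring homomorphism property is immediate from the definition $(\omega \wedge \eta)(X) := \omega(X) \wedge \eta(X)$ of the product in $C_G(M)$, evaluated at $X=0$.

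The chain map property is the one computation to record. By Definition \ref{def:equivariantdifferential},
\[
(d_G \omega)(X) = d(\omega(X)) - i_{\overline{X}}\omega(X),
\]
and setting $X = 0$ we have $\overline{0} = 0$ as a vector field on $M$, hence $i_{\overline{0}} = 0$, so $(d_G\omega)(0) = d(\omega(0))$. This is exactly the statement that evaluation at $0$ intertwines $d_G$ with $d$.

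Combining these three observations, the evaluation at $0$ is a morphism of differential graded $\RR$-algebras $C_G(M) \to \Omega(M)$, and therefore descends to a homomorphism of graded $\RR$-algebras $H^*_G(M) \to H^*(M)$ as claimed. The only subtlety worth flagging to the reader is the interaction with the grading convention: the factor $S^k(\mfg^*)$ is double-weighted, so evaluating at $0$ can only preserve degree (not decrease it) because the pieces with $k \geq 1$ that are killed have $\ell < n$ anyway.
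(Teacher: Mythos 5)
Your proof is correct and rests on exactly the same key observation as the paper's: $\overline{0}=0$ as a vector field, so $i_{\overline{0}}=0$ and hence $(d_G\omega)(0)=d(\omega(0))$. The additional checks of the grading and multiplicativity are fine (and correct) but are essentially bookkeeping that the paper leaves implicit.
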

\begin{proof}
We just need to observe that $(d_G\omega)(0) = d(\omega(0)) - i_{\overline{0}} \omega(0) = d(\omega(0))$.
\end{proof}
This map $H^*_G(M)\to H^*(M)$ is in general not injective (for example for trivial actions) and also not surjective (for example for nontrivial free actions). Note that the composition
\[
H^*_{\bas G}(M) \longrightarrow H^*_G(M) \longrightarrow H^*(M)
\]
of the two natural maps just introduced is nothing but the map induced by the inclusion $\Omega_{\bas G}(M)\to \Omega(M)$.
\begin{ex}
Consider an Hamiltonian action of a compact, connected Lie group $G$ on a symplectic manifold $(M,\omega)$. In this situation we have a \emph{momentum map}, i.e., a $G$-equivariant map $\mu\colon M\to \mfg^*$ such that $i_{\overline{X}}\omega = d\mu^X$, where $\mu^X\colon M\to \RR$ is defined by $\mu^X(p) = \mu(p)(X)$.

The momentum map defines an equivariant linear map (which we call $\mu$ again)
\[
\mu\colon \mfg\longrightarrow C^\infty(M);\, X\longmapsto \mu^X.
\]
In particular, $\mu$ can be regarded as an equivariant $2$-form on $M$: $\mu\in (\mfg^*\otimes C^\infty(M))^G \subset C^2_G(M)$. For any element $f\in (\mfg^*\otimes C^\infty(M))^G$ we can consider the equivariant $2$-form $\omega + f$ and compute
\begin{align*}
d_G(\omega+f)(X) &= (d_G\omega)(X) + (d_Gf)(X) \\
&= d\omega - i_{\overline{X}}\omega + df^X + i_{\overline{X}}f^X \\
&= df^X - i_{\overline{X}}\omega.
\end{align*}
This shows that $\omega + f$ is equivariantly closed if and only if $f\in C^2_G(M)$ is a momentum map for the $G$-action. 

In particular, the cohomology class $[\omega]\in H^2(M)$ is in the image of the natural map $H^2_G(M)\to H^2(M)$. It is even true that for any Hamiltonian action on a compact manifold the map $H^*_G(M)\to H^*(M)$ is surjective, see Example \ref{ex:morsebotteqformal} below.
\end{ex}
\section{Locally free actions}
The topic of this section is a theorem of H.\ Cartan \cite{Cartan2} that says that for (locally) free actions, equivariant cohomology is isomorphic to basic cohomology, hence (in the free case) isomorphic to the de Rham cohomology of the orbit space. Recall Remark \ref{rem:basicsingularcohom} which heuristically explained that this is precisely this class of actions for which basic cohomology is a good invariant -- later we will see that equivariant cohomology is a better invariant than basic cohomology for non-free actions.

\begin{defn} We say that an action of a compact Lie group $G$ on a manifold $M$ is \emph{locally free} if all isotropy groups $G_p$ of the action are finite.
\end{defn}

\begin{thm}\label{thm:eqcohomlocallyfreeactions}
For a locally free action of a compact Lie group $G$ on a manifold $M$ the natural map 
\[
H^*_{\bas G}(M)\longrightarrow H^*_G(M)
\] is an isomorphism.
\end{thm}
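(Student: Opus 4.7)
The strategy is to exploit local freeness to produce a connection form, and from it to construct an explicit homotopy inverse to the inclusion $\iota : \Omega_{\bas G}(M) \hookrightarrow C_G(M)$, $\omega \mapsto 1 \otimes \omega$.

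Local freeness guarantees that for every $p \in M$ the linear map $\mfg \to T_p M$, $X \mapsto \overline{X}_p$, is injective, so that the fundamental vector fields span a $G$-invariant vertical subbundle $V \subset TM$ of rank $\dim \mfg$. Averaging any Riemannian metric produces a $G$-invariant one, hence a $G$-equivariant orthogonal decomposition $TM = V \oplus H$. The associated connection $1$-form $\theta \in (\Omega^1(M) \otimes \mfg)^G$ is characterized by $\iota_{\overline{X}}\theta = X$ for all $X \in \mfg$ and $\theta|_H = 0$; its curvature $\Omega := d\theta + \tfrac{1}{2}[\theta,\theta] \in (\Omega^2(M) \otimes \mfg)^G$ is horizontal by the structure equation.

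Using $\theta$ and $\Omega$, the plan is to construct a chain map $\Psi : C_G(M) \to \Omega_{\bas G}(M)$ whose essential content is to substitute $\Omega$ for the polynomial generators of $S(\mfg^*)$ (a Chern--Weil type evaluation) and then horizontally project using the splitting $TM = V \oplus H$. By the defining property of $\theta$, the composition $\Psi \circ \iota$ acts as the identity on $\Omega_{\bas G}(M)$, since basic forms are already horizontal and the polynomial factor of $\iota(\omega) = 1 \otimes \omega$ is trivial.

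The main obstacle is twofold: arranging this substitute-and-project operation into an honest chain map, and then showing that $\iota \circ \Psi$ is chain homotopic to the identity on $C_G(M)$. The cleanest way to handle both at once is to factor the argument through the Weil algebra $W(\mfg) = S(\mfg^*) \otimes \Lambda(\mfg^*)$ with its Koszul-type differential: the Mathai--Quillen/Kalkman automorphism $e^{-\iota_\theta}$ conjugates the Weil differential into the Cartan differential, while the connection $\theta$ defines a $G$-equivariant DGA morphism $W(\mfg) \otimes \Omega(M) \to \Omega(M)$ that carries basic elements to basic forms. The required chain homotopy then reduces to the acyclicity of $(W(\mfg), d_W)$, a purely algebraic fact independent of $M$. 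In this formulation the technical heart of the proof lies in setting up the Weil-algebra intermediary and checking that $e^{-\iota_\theta}$ indeed intertwines the two differentials, after which the geometric input from $\theta$ and $\Omega$ enters only as naturality.
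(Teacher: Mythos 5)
Your proposal is correct in outline, but it follows a genuinely different route from the one written out in the paper. The paper proves the statement explicitly only for $G=S^1$: it uses the invariant metric to produce a single invariant $1$-form $\alpha$ with $\alpha(\overline{X})=1$, and then gives a direct, elementary argument by iterated degree reduction --- subtracting $d_{S^1}\bigl((\alpha\wedge\omega_k)u^{k-1}\bigr)$ from a closed equivariant form $\omega=\omega_0+\cdots+\omega_k u^k$ lowers the polynomial degree by one, and repeating this lands on a closed basic representative; injectivity is handled by the same reduction. For general $G$ the paper simply cites the literature, and the reference it points to (Guillemin--Sternberg, Section 5.1) is essentially the proof you sketch: a $\mfg$-valued connection form $\theta$, the Weil algebra $W(\mfg)$, the Mathai--Quillen/Kalkman twist $e^{-\iota_\theta}$ conjugating the Weil differential into the Cartan differential, the Cartan map, and acyclicity of $W(\mfg)$. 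What your approach buys is uniformity: it treats arbitrary compact $G$ at once, at the price of importing the Weil-algebra machinery and the verification that $e^{-\iota_\theta}$ intertwines the differentials (which you correctly flag as the technical heart, but do not carry out). What the paper's approach buys is self-containedness and transparency in the rank-one case; it is worth noticing that its $\alpha$ is exactly your connection form for $\mfg=\RR$, and the degree-lowering step is the first-order shadow of the Mathai--Quillen twist, so the two arguments are close relatives rather than strangers. As submitted, your text is a plan rather than a complete proof --- to make it stand on its own you would need to define the Cartan map precisely, check it is a chain map into basic forms, and exhibit the homotopy coming from acyclicity of $W(\mfg)$ --- but there is no gap in the strategy itself.
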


\begin{proof}
There are many references for a proof of this statement. Besides the original source \cite{Cartan2} one can find it e.g.\ in \cite[Section 5.1]{GuilleminSternberg} or \cite{Nicolaescu}. A generalization to other coefficients can be found in \cite[Section 1.7]{DufloKumarVergne}. We will show the theorem only for the special case $G=S^1$.

The main tool in the proof is the following: because the $S^1$-action is free, $\overline X_p\neq 0$ for all $p\in M$. Thus, we find an $S^1$-invariant $1$-form $\alpha$ on $M$ such that $\alpha(\overline{X}) = 1$. (Choose an $S^1$-invariant Riemannian metric on $M$, and define $\alpha$, for any $p$, to be $1$ on $\overline{X}_p$, and zero on the orthogonal complement of $\overline{X}_p$.)

We first show surjectivity of the map $H^*_{\bas S^1}(M)\to H^*_{S^1}(M)$. Let $\omega\in C_{S^1}^n(M)=\RR[u]\otimes \Omega(M)^{S^1}$ be a closed $S^1$-equivariant differential form on $M$, and write
\[
\omega = \omega_0 + \omega_1 u + \cdots + \omega_k u^k,
\]
where the $\omega_i$ are $S^1$-invariant differential forms, with $\deg \omega_i = n-2i$, and $\omega_k\neq 0$. We assume that $k>0$. Closedness of $\omega$ reads as
\[
0 = d_{S^1}\omega = d\omega_0 + (d\omega_1 - i_{\overline X} \omega_0) u + \cdots + (d\omega_k - i_{\overline X} \omega_{k-1}) u^k - i_{\overline{X}} \omega_k u^{k+1}.
\]
In particular, $i_{\overline X} \omega_k = 0$. We now modify $\omega$ by an exact equivariant differential form:
\begin{align*}
\omega &+ d_{S^1} ((\alpha\wedge \omega_k) u^{k-1}) \\
&= \omega_0 + \omega_1 u + \cdots + (\omega_{k-1} + d(\alpha\wedge \omega_k))u^{k-1} +(\omega_k- i_{\overline X}(\alpha\wedge \omega_k)) u^k \\
&= \omega_0 + \omega_1 u + \cdots + (\omega_{k-1} + d(\alpha\wedge \omega_k))u^{k-1}
\end{align*}
because $i_{\overline X} \alpha=1$ and $i_{\overline X} \omega_k=0$. We have thus found, in the same equivariant cohomology class, a representative with polynomial degree one less. We can continue reducing the degree until we are left with a representative that is an ordinary differential form, which is at the same time equivariantly closed, i.e., closed and basic, and hence also defines an element in $H^n_{\bas S^1}(M)$.

Next, we show injectivity of the map $H^*_{\bas S^1}(M)\to H^*_{S^1}(M)$. So assume that $\eta\in \Omega^n_{\bas S^1}(M)$ is a closed basic form which is equivariantly exact, i.e., there exists $\omega = \omega_0 + \omega_1 u + \cdots + \omega_k u^k$ such that
\[
\eta = d_{S^1} \omega = d\omega_0 + (d\omega_1 - i_{\overline X} \omega_0) u + \cdots (d\omega_k - i_{\overline X} \omega_{k-1}) u^k - i_{\overline{X}} \omega_k u^{k+1}.
\]
Im particular, $\omega_k$ is a basic differential form. If $k>0$, then we reduce the polynomial degree of $\omega$ successively as above, by replacing $\omega$ by $\omega + d_{S^1}((\alpha\wedge \omega_k)u^{k-1})$. Having reduced to the case $k=0$, we are done, because then $d\omega_0 = \eta$, i.e., $\eta$ is exact as a basic differential form.
\end{proof}

Combining this theorem with Proposition \ref{prop:basiccohomfreeaction} we obtain:
\begin{cor}\label{cor:orbitspacedings} For a free action of a compact Lie group $G$ on a manifold $M$ the projection map $M\to M/G$ induces an isomorphism $H^*(M/G)\longrightarrow H^*_G(M)$.
\end{cor}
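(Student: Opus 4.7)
The plan is to obtain the corollary by composing the two isomorphisms that have just been established. Since a free action is in particular locally free, Theorem \ref{thm:eqcohomlocallyfreeactions} gives an isomorphism
\[
H^*_{\bas G}(M)\xrightarrow{\cong} H^*_G(M),
\]
induced by the chain map $\Omega_{\bas G}(M)\to C_G(M)$, $\omega\mapsto 1\otimes \omega$. On the other hand, freeness of the action lets me invoke Proposition \ref{prop:basiccohomfreeaction}, yielding an isomorphism of complexes $\pi^*\colon \Omega(M/G)\to \Omega_{\bas G}(M)$, and hence an isomorphism $H^*(M/G)\xrightarrow{\cong} H^*_{\bas G}(M)$. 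Composing, I obtain the claimed isomorphism $H^*(M/G)\to H^*_G(M)$.

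The one thing that requires a brief sanity check is that this composite really is the map \emph{induced by the projection} $\pi\colon M\to M/G$. I would argue this as follows: regard $M/G$ as a $G$-manifold with trivial $G$-action, so that $\pi$ is a $G$-equivariant map; then $\pi$ induces $\pi^*\colon C_G(M/G)\to C_G(M)$. Composing with the inclusion of ordinary forms $\Omega(M/G)\hookrightarrow C_G(M/G)$, $\omega\mapsto 1\otimes\omega$ (which is well-defined because the $G$-action on $M/G$ is trivial and hence every form on $M/G$ is $G$-invariant and horizontal), one obtains precisely the chain map $\omega\mapsto 1\otimes \pi^*\omega$, which is exactly the composition of the two maps above.

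I do not expect any real obstacle: the proof is essentially a two-line combination of Proposition \ref{prop:basiccohomfreeaction} and Theorem \ref{thm:eqcohomlocallyfreeactions}. The only point worth spelling out carefully is the identification of the composite with the map induced by $\pi$, and even there the verification is immediate from the explicit formulas for both maps.
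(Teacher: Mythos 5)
Your proof is correct and matches the paper's own argument, which likewise obtains the corollary by combining Theorem \ref{thm:eqcohomlocallyfreeactions} with Proposition \ref{prop:basiccohomfreeaction}. The extra verification that the composite coincides with the map induced by $\pi$ (via the trivial $G$-action on $M/G$) is a sensible addition that the paper leaves implicit.
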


\begin{rem}
One should note that in the Borel model, see Remark \ref{rem:Borelmodel}, the proof of this theorem is much easier, see e.g., \cite[Section 1.1]{GuilleminSternberg}: to see that $H^*_G(M;R) \cong H^*(M/G;R)$ one only needs to observe that in this case $EG\times_G M\to M/G$ is a fiber bundle with contractible fiber.
\end{rem}

\begin{rem}\label{rem:commutingactionprinciple}
A more general version of this theorem states that for an action of a product $G\times H$ on a manifold $M$ such that the action of the subgroup $G$ is free, the natural map
\[
H^*_H(M/G) \longrightarrow H^*_{G\times H}(M)
\]
is an isomorphism. In Proposition \ref{Appendix:Prop:QuotientShenanigans} we will give a proof of this statement in case $G$ and $H$ are tori.
\end{rem}

\section{Equivariant homotopy and Mayer-Vietoris}
Many standard techniques and results from ordinary cohomology theory have an equivariant counterpart. In this section we prove two of them: the equivariant version of the homotopy axiom and of the Mayer-Vietoris sequence.

\begin{thm} Assume that $G$ acts on $M$ and $N$, and let $f,g\colon M\to N$ be $G$-homotopic equivariant maps, i.e., there exists a smooth $G$-equivariant homotopy $F\colon M\times \RR\to N$ such that $F(\cdot,0)=f$ and $F(\cdot,1)=g$, where we extend the $G$-action to $M\times \RR$ trivially on the second factor. Then $f^*=g^*\colon H^*_G(N)\to H^*_G(M)$.
\end{thm}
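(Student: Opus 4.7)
The plan is to mimic the classical de Rham proof of the homotopy axiom by constructing an explicit chain homotopy on the Cartan complex. First I would reduce the statement to the special case of the two inclusions $\iota_0,\iota_1\colon M\hookrightarrow M\times\RR$ at the endpoints: since $f=F\circ\iota_0$ and $g=F\circ\iota_1$ are $G$-equivariant, functoriality gives $f^*=\iota_0^*\circ F^*$ and $g^*=\iota_1^*\circ F^*$, so it suffices to show $\iota_0^*=\iota_1^*\colon H_G^*(M\times\RR)\to H_G^*(M)$.

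For this, I would construct a homotopy operator $K_G\colon C_G^n(M\times\RR)\to C_G^{n-1}(M)$ by fiber integration over $[0,1]$. Given $\omega\in C_G(M\times\RR)$ viewed as an equivariant polynomial map $\mfg\to\Omega(M\times\RR)^G$, for each $X\in\mfg$ decompose uniquely
\[
\omega(X)=\alpha(X)+dt\wedge\beta(X),
\]
where $\alpha(X)$ and $\beta(X)$ contain no $dt$-factor (i.e.\ are annihilated by $i_{\partial/\partial t}$), and set
\[
(K_G\omega)(X):=\int_0^1 \iota_t^*\bigl(\beta(X)\bigr)\,dt.
\]
Since $G$ acts trivially on the $\RR$-factor, the vector field $\partial/\partial t$ is $G$-invariant, so the decomposition is $G$-equivariant and the integration commutes with the $G$-action; thus $K_G\omega$ genuinely lies in $C_G(M)$. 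The degree shift $-1$ is forced by the $dt$-factor.

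The heart of the proof is the chain homotopy identity
\[
d_G\,K_G+K_G\,d_G=\iota_1^*-\iota_0^*.
\]
Using $d_G=d-i_{\overline{X}}$ in the sense of Definition \ref{def:equivariantdifferential} and Lemma \ref{lem:d_Gexplicit}, I would split this into two checks. For the $d$-part, one recovers the classical non-equivariant identity by applying the Cartan-type computation pointwise in $X\in\mfg$; essentially this is Stokes' theorem on the fiber $[0,1]$ evaluated on $\beta(X)$, together with the fact that $d$ on $M\times\RR$ decomposes as $d_M+dt\wedge\partial_t$. For the $i_{\overline{X}}$-part, the crucial observation is that the fundamental vector field $\overline{X}$ on $M\times\RR$ lies entirely in the $M$-direction, hence $i_{\overline{X}}(dt)=0$; consequently $i_{\overline{X}}$ preserves the decomposition $\omega(X)=\alpha(X)+dt\wedge\beta(X)$, commutes with pullback along $\iota_t$, and passes through the fiber integral. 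This yields $i_{\overline{X}}K_G=K_G i_{\overline{X}}$ (with the correct Koszul sign), so the $i_{\overline{X}}$ contributions in $d_G K_G+K_G d_G$ cancel exactly.

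The main obstacle I anticipate is careful bookkeeping of signs in the decomposition $\omega(X)=\alpha(X)+dt\wedge\beta(X)$ and in the interaction of $K_G$ with $d$ and $i_{\overline{X}}$; once the identity above is established, the conclusion is immediate: for any equivariantly closed $\omega\in C_G(M\times\RR)$ one has $\iota_1^*\omega-\iota_0^*\omega=d_G(K_G\omega)$, so $\iota_0^*[\omega]=\iota_1^*[\omega]$ in $H_G^*(M)$, and therefore $f^*=g^*$.
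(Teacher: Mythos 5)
Your proposal is correct and is essentially the paper's own argument: the paper defines the chain homotopy directly as $A\omega(X)=Q(F^*(\omega(X)))$ with $Q\alpha=\int_0^1 i_{\partial_t}\alpha\,dt$, which is exactly your $K_G\circ F^*$, and likewise reduces the verification to the classical de Rham identity plus the (anti)commutation of the fiber integral with $i_{\overline{X}}$ coming from $\overline{X}$ having no $\partial_t$-component. Your preliminary reduction to the two inclusions $\iota_0,\iota_1$ of $M$ into the cylinder is only a cosmetic repackaging of the same computation.
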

\begin{proof}
Recall the usual proof of the homotopy axiom for de Rham cohomology in the nonequivariant setting: one considers the operator 
\[
Q\colon \Omega^k(M\times \RR)\to \Omega^{k-1}(M);\, \alpha\mapsto \int_0^1 i_{\partial_t} \alpha\, dt
\]
and shows that it satisfies the equation
\begin{equation}\label{eq:oldhomotopy}
d\circ Q\circ F^*+Q\circ F^*\circ d = g^*-f^*\colon \Omega(N)\to \Omega(M),
\end{equation}
i.e., that $Q\circ F^*$ is a chain homotopy between $f^*$ and $g^*$, see \cite[\S I.4]{BottTu}, \cite[\S 7.5, Example 9]{Onishchik}. We claim that this equation is still valid equivariantly, in the sense of Equation \eqref{eq:newhomotopy} below. Define
$A\colon C_G^k(M)\to C_G^{k-1}(M)$ by
\[
(A\omega)(X) = Q(F^*(\omega(X))).
\]
First we need to show that $A$ is well-defined, i.e., that $A\omega$ is again a $G$-equivariant differential form. As $F$ is a $G$-homotopy, we have $F(gp,t)=gF(p,t)$ for all $g\in G$, $p\in M$ and $t\in \RR$, i.e., $F\circ g = g \circ F$. Moreover, we have $Q\circ g^* = g^*\circ Q$. Putting this together, we obtain
\[
(A\omega)(\Ad_g X) = Q(F^*(\omega(\Ad_g X))) = Q(F^*((g^{-1})^*(\omega(X)))) = (g^{-1})^* ((A\omega)(X)).
\]
We claim now that 
\begin{equation}\label{eq:newhomotopy}
d_G\circ A + A\circ d_G = g^*-f^*\colon C_G(N)\to C_G(M).
\end{equation}
For any $\omega \in C_G(N)$, we have
\begin{align*}
(d_G(A\omega))(X) &= d((A\omega)(X)) - i_{\overline{X}} ((A\omega)(X)) \\
&=d(Q(F^*(\omega(X)))) - i_{\overline{X}}(Q(F^*(\omega(X))))\\
&=d(Q(F^*(\omega(X)))) + Q(i_{\overline{X}} (F^*(\omega(X))))\\
&=d(Q(F^*(\omega(X)))) + Q(F^*(i_{\overline{X}}(\omega(X)))),
\end{align*}
where we used that $F$ is $G$-equivariant in the last line. Moreover, we have
\begin{align*}
(A(d_G\omega)(X)) &= Q(F^*(d(\omega(X)) - i_{\overline{X}}(\omega(X)))) \\
&= Q(F^*(d(\omega(X)))) - Q(F^*(i_{\overline{X}}(\omega(X)))).
\end{align*}
Adding up these two equations, \eqref{eq:oldhomotopy} implies \eqref{eq:newhomotopy}. This proves the theorem.
\end{proof}
It follows that if $M$ and $N$ are manifolds on which a compact Lie group $G$ acts, and which are $G$-homotopy equivalent, i.e., for which both $f\circ g$ and $g\circ f$ are equivariantly homotopic to the identity map, then $H^*_G(M)$ and $H^*_G(N)$ are isomorphic as graded $S(\mfg^*)^G$-algebras (via the maps $f^*$ and $g^*$).

\begin{thm}[Equivariant Mayer-Vietoris sequence]\label{thm:equivMV} Let $U,V\subset M$ be open $G$-invariant subsets such that $U\cup V=M$. Denote the natural inclusions by $i_U\colon U\to M$, $i_V\colon V\to M$, $j_U\colon U\cap V\to U$, $j_V\colon U\cap V\to V$. Then there is a long exact sequence 
\[
\cdots \longrightarrow H^*_G(M)\overset{i_U^*\oplus i_V^*}{\longrightarrow} H^*_G(U)\oplus H^*_G(V)\overset{j_U^*-j_V^*}{\longrightarrow} H^*_G(U\cap V)\overset{\delta}\longrightarrow H_G^{*+1}(M)\longrightarrow \cdots
\]
The maps $i_U^*$ and $i_V^*$ are $S(\mfg^*)^G$-algebra homomorphisms, but $j_U^*-j_V^*$ and $\delta$ are only $S(\mfg^*)^G$-module homomorphisms.
\end{thm}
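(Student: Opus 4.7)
The plan is to build, as in the nonequivariant case, a short exact sequence of Cartan complexes and then appeal to the zig-zag lemma. Specifically, I would consider
\begin{equation*}
0 \longrightarrow C_G(M) \xrightarrow{i_U^*\oplus i_V^*} C_G(U)\oplus C_G(V) \xrightarrow{j_U^* - j_V^*} C_G(U\cap V) \longrightarrow 0,
\end{equation*}
where all maps are the restriction-induced maps on equivariant forms. Each restriction map $f^*$ defined by $(f^*\omega)(X) = f^*(\omega(X))$ commutes with $d_G$ and preserves $G$-equivariance, so the sequence lives in the category of chain complexes of $S(\mfg^*)^G$-modules.

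Exactness at $C_G(M)$ is immediate: if a $G$-equivariant form vanishes on both $U$ and $V$, it vanishes on $M=U\cup V$. Exactness in the middle is also elementary: given $(\alpha,\beta)$ with $\alpha|_{U\cap V}=\beta|_{U\cap V}$, the coefficients of $\alpha$ and $\beta$ in any basis $\{u_I\}$ of $S(\mfg^*)$ glue to ordinary differential forms on $M$ which, being $G$-invariant on each piece, are $G$-invariant on $M$.

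The main obstacle is surjectivity of $j_U^*-j_V^*$, which is where I would use averaging to produce a $G$-invariant partition of unity. Starting from any smooth partition of unity $\{\widetilde\rho_U,\widetilde\rho_V\}$ subordinate to $\{U,V\}$, I average over $G$ using the Haar measure (exactly as in the proof of Theorem \ref{thm:cohomofinvariantforms}) to obtain $G$-invariant smooth functions $\rho_U,\rho_V$ with $\rho_U+\rho_V=1$, $\supp\rho_U\subset U$, $\supp\rho_V\subset V$. Then for $\eta\in C_G(U\cap V)$, the form $\rho_V\cdot\eta$ extends by zero to a $G$-equivariant form $\alpha\in C_G(U)$, and $-\rho_U\cdot\eta$ extends by zero to $\beta\in C_G(V)$; on the overlap $\alpha-\beta = (\rho_U+\rho_V)\eta = \eta$. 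Note multiplication by an invariant function preserves $G$-equivariance and commutes with no differential issues since we only use it to check surjectivity, not chain-map properties.

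Having established the short exact sequence, the zig-zag lemma (applied to the grading induced by the total degree on $C_G$) produces the long exact sequence of cohomology and the connecting homomorphism $\delta$ of degree $+1$. The $S(\mfg^*)^G$-module structure is preserved throughout because multiplication by elements of $S(\mfg^*)^G$ commutes with all restriction maps and with $d_G$, and the snake construction of $\delta$ is built from such operations. For the algebraic statements: $i_U^*$ and $i_V^*$ are algebra homomorphisms since pullback by a smooth map is a ring homomorphism on the Cartan complex (by Lemma \ref{lem:algebrastructurefrommaptopoint} and the definition of the product on $C_G$). The map $j_U^*-j_V^*$ fails to be multiplicative purely for the formal reason that a difference of ring homomorphisms is not a ring homomorphism, and $\delta$ is not multiplicative because its construction via the snake lemma involves choosing lifts and applying $d_G$, an operation which is only a derivation and respects the module but not the ring structure.
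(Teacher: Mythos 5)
Your proof is correct and follows the same skeleton as the paper's: establish the short exact sequence of Cartan complexes and apply the zig-zag lemma, with the module and algebra statements read off from the construction. The one place where your technique genuinely differs is in verifying exactness at the middle and right terms. The paper first tensors the nonequivariant Mayer--Vietoris sequence with $S(\mfg^*)$ (preserving exactness), takes an arbitrary, possibly noninvariant preimage, and then averages it over $G$ with the diagonal action; you instead glue directly in the middle and build an invariant preimage on the right via a $G$-invariant partition of unity. Both work; your route is the more classical one, while the paper's averaging trick has the advantage of reducing everything to the already-known nonequivariant statement and sidesteps any discussion of invariant partitions of unity. One small imprecision in your gluing step: for nonabelian $G$ the individual coefficients $\alpha_I$ in a basis expansion $\alpha=\sum_I\alpha_I u_I$ need \emph{not} be $G$-invariant forms, since invariance of $\alpha$ refers to the diagonal action on $S(\mfg^*)\otimes\Omega(U)$ involving the coadjoint representation on the $u_I$. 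The glued element $\mu$ is nevertheless $G$-invariant, because the invariance condition $\mu(\Ad_gX)=(g^{-1})^*\mu(X)$ is checked pointwise and every point of $M$, together with its entire $G$-orbit, lies in $U$ or in $V$; alternatively one can simply average $\mu$ as the paper does. With that remark your argument is complete.
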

\begin{proof} Tensoring the short exact sequence 
\begin{equation}\label{eq:equivmvproof}
0\longrightarrow \Omega(M) \overset{i_U^*\oplus i_V^*}\longrightarrow \Omega(U)\oplus \Omega(V)\overset{j_U^*-j_V^*}\longrightarrow \Omega(U\cap V)\longrightarrow 0
\end{equation}
on the level of differential forms with $S(\mfg^*)$ preserves exactness. We take $G$-invariant forms in each term and  and obtain a sequence
\[
0\longrightarrow C_G^*(M) \overset{i_U^*\oplus i_V^*}\longrightarrow C_G^*(U)\oplus C_G^*(V)\overset{j_U^*-j_V^*}\longrightarrow C_G^*(U\cap V)\longrightarrow 0
\]
of which we need to show exactness. Injectivity at the first term is clear, as well as the inclusion of the image in the kernel at the second term. Let $(\omega,\eta)\in \ker(j_U^*-j_V^*)$. We find $\mu\in S(\mfg^*)\otimes \Omega(M)$ such that $(i_U^*\mu,i_V^*\mu) = (\omega,\eta)$, because the sequence \eqref{eq:equivmvproof}, tensored with $S(\mfg^*)$, is exact. We define $\tilde\mu\in C_G(M)$ as
\[
\tilde\mu = \int_G g^*\mu\, dg,
\]
where $g$ acts on $S(\mfg^*)\otimes \Omega(M)$ diagonally, i.e.,
\[
\tilde\mu(X) = \int_G (g^{-1})^*\mu(\Ad_{g^{-1}}X)\, dg
\]
for $X\in \mfg$, and claim that $(i_U^*\tilde\mu,i_V^*\tilde\mu) = (\omega,\eta)$ as well. For that, we compute
\begin{align*}
i_U^*\tilde\mu(X) &= \int_G i_U^* (g^{-1})^*\tilde \mu(\Ad_{g^{-1}} X)\, dg = \int_G (g^{-1})^* i_U^*\tilde \mu(\Ad_{g^{-1}} X)\, dg\\
&= \int_G (g^{-1})^* \omega(\Ad_{g^{-1}} X)\, dg = \int_G \omega(X)\, dg = \omega(X)
\end{align*}
because $\omega$ is already $G$-invariant. Analogously, $i_V^*\tilde\mu = \eta$, so we have shown exactness at the second term.

For the surjectivity we argue similarly: we start with a possibly noninvariant preimage of an element in $C_G^*(U\cap V)$, and average (both components separately). Thus, we have an induced long exact sequence in equivariant cohomology.
\end{proof}

\begin{ex}\label{ex:S1aufS2}
Consider the $S^1$-action on $S^2$ by rotation around the $z$-axis. Let $S^2=U\cup V$ be the covering of $S^2$ by upper and lower hemisphere. Then $U$ and $V$ are $S^1$-equivariantly homotopy equivalent to the north respectively to the south pole, and $U\cap V$ is $S^1$-equivariantly homotopy equivalent to the equator. Therefore,
\[
H^*_{S^1}(U)=H^*_{S^1}(V)=H^*_{S^1}({\mathrm{pt}})=\RR[u]
\]
with $u$ in degree two, and using Theorem \ref{thm:eqcohomlocallyfreeactions},
\[
H^*_{S^1}(U\cap V) = H^*_{S^1}(S^1) = \RR
\]
concentrated in degree zero. We obtain an exact sequence
\[
\cdots \longrightarrow H^*_{S^1}(S^2)\longrightarrow \RR[u] \oplus \RR[u] \overset{\varphi}\longrightarrow \RR \longrightarrow \cdots
\]
where the map $\varphi$ is given by $\varphi(f,g)=f(0)-g(0)$. It is surjective, so the sequence is in fact short exact and we obtain an isomorphism of $\RR[u]$-algebras
\[
H^*_{S^1}(S^2)=\{(f,g)\in \RR[u]\oplus \RR[u]\mid f(0)=g(0)\}.
\]
Note that $H^*_{S^1}(S^2)$ is a free $\RR[u]$-module: a basis is given by $(1,1)$ and $(u,-u)$.

Note also the peculiar feature of this example that the map on equivariant cohomology induced by the inclusion of the fixed point set into the manifold is injective (the fixed point set is exactly the union of north and south pole). It will be a consequence of the Borel Localization Theorem that this is the case for a large class of actions.
\end{ex}

\section{Equivariant formality} \label{sec:equivariantformality}

Starting with this section, we will make use of the spectral sequence of the Cartan model, which is introduced in Section \ref{Appendix:Section:CartanSpeq}.
\begin{defn}
An action of a compact Lie group $G$ on a smooth manifold $M$ is \emph{equivariantly formal} if the spectral sequence of the Cartan model collapses at the $E_1$-term. 
\end{defn}

\begin{rem}
The term \emph{equivariant formality} was introduced twenty years ago in \cite{GKM}. In the context of the Borel model, see Remark \ref{rem:Borelmodel}, the Serre spectral sequence of the (Borel) fibration $EG\times_G M \to BG$ at and after $E_2$ is equivalent to the spectral sequence of the Cartan model at and after $E_2$. Since $E_1=E_2$ in the Cartan model (see Remark \ref{rem:doddvanishes}, the collapse of the Serre spectral sequence at the $E_2$-term is equivalent to equivariant formality of the action. This collapse is, in turn, equivalent to the surjectivity of the map induced in cohomology by the fiber inclusion (cf.\ Theorem \ref{thm:bigthmequivformal} below), which is usually described by saying that the fiber is \emph{totally nonhomologous to zero}, or that the fibration itself is \emph{totally nonhomologous to zero}, abbreviated \emph{TNHZ}, see e.g.\ \cite{BredonBook}, \cite{AlldayPuppe}, or \cite{FelixOpreaTanre}. Instead of the term \emph{equivariant formality} many authors thus just speak about $M$ being (totally) nonhomologous to zero in the Borel fibration. This condition already appears in \cite[Chapter XII]{Borel}.

It was shown in \cite[Theorem 1.5.2]{GKM} that equivariant formality implies formality properties of certain differential graded modules, which explains the choice of terminology. One might argue though that this nomenclature is not optimal as the formality aspect is just a consequence of the much stronger condition of equivariant formality and there are not many connections to the notion of formality from the point of view of rational homotopy theory. One such connection was given in \cite{CarlsonFok} where the authors prove that if the isotropy action of a homogeneous space is equivariantly formal, then the space is formal. Note that the other implication is not valid, see e.g.\ \cite[Example 4.2]{CarlsonFok}.
\end{rem}
The following theorem collects some equivalent formulations of equivariant formality, as well as some justification of its relevance: Condition $(5)$ says that for equivariantly formal actions the ordinary de Rham cohomology of $M$ is determined by the equivariant cohomology algebra. Note that the equivalence of $(1)$ and $(3)$ is not trivial:  by Proposition \ref{prop:E1term} the $E_1$-term of the spectral sequence is $S(\mfg^*)^G\otimes H^*(M)$, so equivariant formality tells us directly that $H^*_G(M)\cong S(\mfg^*)^G\otimes H^*(M)$, but this isomorphism is only one of graded vector spaces. In general, $H^*_G(M)$ and $E_\infty$ are not isomorphic as $S(\mfg^*)^G$-modules -- see Section \ref{sec:counterexample} for a counterexample. 

\begin{thm}\label{thm:bigthmequivformal} The following conditions are equivalent, for an action of a compact, connected Lie group $G$ on a compact manifold $M$:
\begin{enumerate}
\item The $G$-action is equivariantly formal.
\item The canonical map $H^*_G(M)\to H^*(M)$ is surjective.
\item
There is an isomorphism of graded $S(\mfg^*)^G$-modules
\[
H^*_G(M) \cong S(\mfg^*)^G\otimes H^*(M).
\]
(In particular $H^*_G(M)$ is a free module over $S(\mfg^*)^G$.)
\end{enumerate}
If these conditions are satisfied, then also the following statements hold true:
\begin{enumerate}
\item[(4)] The kernel of the canonical map $H^*_G(M)\to H^*(M)$ is the ideal generated by  the image of $S^+(\mfg^*)^G\to H^*_G(M)$, i.e.,
\[
S^+(\mfg^*)^G\cdot H^*_G(M) = \Big\{\sum_i f_i [\eta_i]\mid f_i\in S^+(\mfg^*)^G,\, [\eta_i]\in H^*_G(M)\Big\}.
\]
Here, $S^+(\mfg^*)^G$ denotes the positive degree elements in $S(\mfg^*)^G$.
\item[(5)] We have an isomorphism of $\RR$-algebras
\begin{equation}\label{eq:ordinarycohomfromeqcohom}
H^*(M) \cong \frac{H^*_G(M)}{S^+(\mfg^*)^G\cdot H^*_G(M)}.
\end{equation}
\end{enumerate}
\end{thm}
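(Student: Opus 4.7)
The plan is to establish $(1) \Leftrightarrow (2) \Leftrightarrow (3)$ via the spectral sequence of the Cartan model, and then to derive (4) and (5) from the isomorphism in (3) by tracking the canonical map $H^*_G(M) \to H^*(M)$. The key conceptual point is that although equivariant formality immediately gives $H^*_G(M) \cong E_\infty = S(\mfg^*)^G \otimes H^*(M)$ as graded vector spaces, promoting this to an $S(\mfg^*)^G$-module isomorphism requires a coherent choice of lifts of a basis of $H^*(M)$ to equivariantly closed forms.

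For $(1) \Leftrightarrow (2)$, I would argue with permanent cocycles. In the forward direction, an arbitrary class $[\eta] \in H^*(M)$ admits, by Theorem \ref{thm:cohomofinvariantforms}, a $G$-invariant closed representative $\eta$, giving a class in $E_1^{0,*}$; collapse at $E_1$ says this class survives all differentials, which translates into the existence of an equivariantly closed extension $\tilde\eta = \eta + (\text{higher polynomial-degree corrections}) \in C_G^*(M)$ whose image under $H^*_G(M) \to H^*(M)$ is $[\eta]$. In the backward direction, surjectivity of $H^*_G(M) \to H^*(M)$ means every class in $E_1^{0,*}$ lifts, so $d_r$ vanishes on $E_r^{0,*}$ for all $r$; and $E_1^{*,0} = S(\mfg^*)^G$ consists of permanent cocycles because each $f \otimes 1$ is manifestly $d_G$-closed. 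Since $E_r$ is generated as an algebra by $E_r^{*,0}$ and $E_r^{0,*}$ at every stage and the differentials obey Leibniz, all $d_r$ must vanish.

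For $(1) \Rightarrow (3)$, I would pick a homogeneous $\RR$-basis $\{[\eta_i]\}$ of $H^*(M)$ with invariant closed representatives $\eta_i$, extend each to an equivariantly closed form $\tilde\eta_i$ using (2), and define
\[
\Phi \colon S(\mfg^*)^G \otimes H^*(M) \longrightarrow H^*_G(M), \quad f \otimes [\eta_i] \longmapsto f \cdot [\tilde\eta_i].
\]
This is an $S(\mfg^*)^G$-linear map of graded modules, filtration-preserving with respect to the polynomial-degree filtration. On the associated graded it becomes the identification $S(\mfg^*)^G \otimes H^*(M) = E_1 = E_\infty = \operatorname{gr} H^*_G(M)$ supplied by collapse; compactness of $M$ implies that only finitely many filtration levels contribute to each total degree, so the associated-graded isomorphism lifts to one of filtered modules. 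For the converse $(3) \Rightarrow (1)$, I would compare dimensions: (3) forces $\dim H^n_G(M) = \dim E_1^n$ for every $n$, and since dimensions only drop through the spectral sequence, $E_1 = E_\infty$.

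To extract (4) and (5), note that by construction $\tilde\eta_i(0) = \eta_i$, so under $\Phi$ the natural projection $H^*_G(M) \to H^*(M)$ corresponds to the augmentation $f \otimes [\eta] \mapsto f(0) [\eta]$. Its kernel $S^+(\mfg^*)^G \otimes H^*(M)$ is carried by $\Phi$ to the submodule $S^+(\mfg^*)^G \cdot H^*_G(M)$, giving (4), and (5) is the quotient statement. I expect the main obstacle to be the lifting step in $(1) \Rightarrow (3)$: while essentially a filtered-module bookkeeping argument, it is the only place where one must go beyond abstract spectral-sequence data and engage with the Cartan complex itself, and it is what gives the implication its teeth in distinguishing between a vector-space and a module isomorphism.
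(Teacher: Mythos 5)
Your proposal is correct, and the equivalence $(1)\Leftrightarrow(2)$ is argued exactly as in the paper (permanent cocycles built degree by degree in one direction; vanishing of $d_r$ on $E_r^{0,*}$ propagated to all of $E_r$ by $S(\mfg^*)^G$-linearity/Leibniz in the other). Where you genuinely diverge is in the order and mechanism of the remaining implications. The paper first proves $(4)$ directly from $(2)$ by an explicit reduction inside the Cartan complex (repeatedly subtracting exact forms and $S^+$-multiples of lifted classes from an element of the kernel), then uses $(4)$ inductively to show that the lifts $[\beta_i]$ generate $H^*_G(M)$ over $S(\mfg^*)^G$, and finally upgrades the generating set to a basis by a degreewise dimension count against $E_1$. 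You instead go straight from $(1),(2)$ to $(3)$: your map $\Phi$ is filtration-preserving, induces the canonical identification $E_1=E_\infty=\operatorname{gr}H^*_G(M)$ on associated gradeds, and is therefore an isomorphism because the filtration is finite in each degree; $(4)$ and $(5)$ then fall out of the identity $r\circ\Phi=\mathrm{augmentation}$, since $\Phi$ carries $S^+(\mfg^*)^G\otimes H^*(M)$ onto $S^+(\mfg^*)^G\cdot H^*_G(M)$. This is close in spirit to the alternative sketched in the paper's remark following the proof (which routes through Lemma \ref{Appendix:Lem:modulegenerators}), but your version replaces both the generation argument and the dimension count by the general principle that a filtered morphism inducing an isomorphism on associated gradeds is an isomorphism; the price is that you must invoke the multiplicative/module structure of the spectral sequence to identify $\operatorname{gr}\Phi$, which you do. Two small points: the finiteness of the filtration in each total degree comes from canonical boundedness ($F^{n+1}C^n=0$), not from compactness of $M$ --- compactness is only needed for the finite-dimensionality used in $(3)\Rightarrow(1)$; and your phrase ``$E_r$ is generated as an algebra by $E_r^{*,0}$ and $E_r^{0,*}$ at every stage'' is justified because at each stage of the induction one already knows $E_r=E_1$, for which the statement is clear.
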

\begin{proof}
We first show that $(1)$ and $(2)$ are equivalent. Assuming $(1)$, we consider a cohomology class in $H^n(M)$, represented by a $G$-invariant differential form $\omega_0$. As $d_G\omega_0\in C_G^{2,n-1}(M)$ we have $\omega_0\in A_2^{0,n}$ and can consider the element $[\omega_0]\in E_2^{0,n}$, where we use the notation from Section \ref{Appendix:ConstrSec}. The latter is annihilated by the differential $d_2$, because $d_2\colon E_2\to E_2$ is the zero map by assumption. Thus $d_G\omega_0$ lies in $d_G(A_1^{1,n-1})+A_1^{3,n-2}$. Consequently we find $\omega_1\in C^{1,n-1}_G(M)$ with $d_G\omega_1+d_G\omega_0\in C_G^{3,n-2}(M)$.
Now the element $\omega_0+\omega_1$ lies in $A_3^{0,n}$ and induces an element of $E_3^{0,n}$. Using now that $d_3=0$ we inductively construct an element $\omega=\omega_0+\ldots+\omega_n$ with $d_G\omega=0$ and $\omega(0)=\omega_0$. We have shown that $H^*_G(M)\to H^*(M)$ is surjective.

Assume now that $(2)$ holds, i.e., that we can extend any closed $G$-invariant form $\omega_0$ to a closed equivariant differential form $\omega_0 + \omega_1 + \cdots$. But again by definition of the higher differentials in the spectral sequence this means that all $d_r$, $r=1,2,\ldots$, vanish. (Inductively; first they vanish on $E_r^{0,*}$, but because the $E_r$ are modules over $S(\mfg^*)^G$, and the $d_r$ are $S(\mfg^*)^G$-linear, they vanish completely.) Thus, $(1)$ holds.

We next show that $(2)$ implies $(4)$ and $(5)$. It is clear that $S^+(\mfg^*)^G\cdot H^*_G(M)$ is contained in the kernel of the canonical map $H^*_G(M)\to H^*(M)$. So let $\omega = \omega_0 + \omega_1+\cdots \in H^*_G(M)$ be an element in the kernel, where we use the same notation as above: the index $i$ refers to the polynomial degree of $\omega_i$. Being in the kernel means that $\omega_0=d\beta_0$ is exact as an ordinary invariant differential form. By replacing $\omega$ by $\omega - d_G \beta_0$ we can assume that $\omega_0=0$. Now consider $\omega_1$. Because $d\omega_1=0$, and the $E_1$-term is $S(\mfg^*)^G\otimes H^*(M)$, we can (by adding an appropriate exact form) assume that $\omega_1\in S^1(\mfg^*)^G\otimes \Omega(M)^G$, i.e., $\omega_1 = \sum_j f_j \gamma_j$, for $G$-invariant linear forms $f_j$, and closed $G$-invariant forms $\gamma_j$. Now, because $H^*_G(M)\to H^*(M)$ is surjective, we can extend the $\gamma_j$ to equivariantly closed differential forms $\tilde\gamma_j$, and subtract $\sum_j f_j \tilde\gamma_j$ from $\omega$ to obtain an element in the kernel of the form $\omega_2 + \omega_3 + \cdots$. By continuing in the same way, we have shown the desired expression for the kernel, i.e., $(4)$. Statement $(5)$ follows directly by combining $(2)$ with $(4)$.

Using this implication, we next show that $(1)$ and $(2)$ imply $(3)$: we construct a module isomorphism $H^*_G(M)\cong S(\mfg^*)^G\otimes H^*(M)$. More precisely, we fix a vector space basis $\{[\alpha_i]\}$ of $H^*(M)$, and preimages $[\beta_i]$ of the $[\alpha_i]$ under the canonical map $H^*_G(M)\to H^*(M)$, which exist by $(2)$. In other words, the $\beta_i$ are equivariant differential forms whose polynomial parts are cohomologous to $\alpha_i$. We wish to show that $H^*_G(M)$ is a free $S(\mfg^*)^G$-module with basis $\{[\beta_i]\}$. 

Let us show that the $[\eta_i]$ span $H^*_G(M)$ as a module over $S(\mfg^*)^G$. We proceed by induction on the degree. For degree zero this is true, because $H^0_G(M) = H^0(M)$. So take an arbitrary class $[\omega]\in H^*_G(M)$. We write $[\omega(0)] = \sum_i a_i [\alpha_i]$, for $a_i\in \RR$. By subtracting $\sum_i a_i [\beta_i]$ from $[\omega]$ we thus obtain an element in the kernel of $H^*_G(M)\to H^*(M)$. By $(4)$, this element is a linear combination $\sum_i f_i [\eta_i]$, for some $f_i$ of positive degree. By induction, the $[\eta_i]$ are contained in the span of the $[\beta_i]$, and hence also $[\omega]$.

Finally, we consider the $S(\mfg^*)^G$-module homomorphism
\[
S(\mfg^*)^G\otimes H^*(M) \longrightarrow H^*_G(M)
\]
given by $f\otimes [\alpha_i]\longmapsto f[\beta_i]$. We have shown that it is surjective. But by the collapse of the spectral sequence (condition $(1)$), for every $n$ the degree $n$ part of the left and the right hand side are isomorphic (as abstract vector spaces). Because they are also finite-dimensional (we assumed that $M$ is a compact manifold, and we know also that the polynomial ring $S(\mft^*)^G$ is finite-dimensional in each degree) this map has to be an isomorphism. We have shown $(3)$.

To conclude, we observe that $(3)$ implies $(1)$: if $H^*_G(M)\cong S(\mfg^*)^G\otimes H^*(M)$, then by Proposition \ref{prop:E1term}, $H^*_G(M)$ and the $E_1$-term of the spectral sequence are isomorphic as graded $S(\mfg^*)^G$-modules, and in particular as graded vector spaces. As both vector spaces are finite-dimensional in every degree, this forces all differentials of the spectral sequence to vanish, i.e., the action to be equivariantly formal.
\end{proof}

\begin{rem}
Using more results from the appendix, one can shorten the argument. Without taking the detour through $(4)$ and $(5)$, the equivalent conditions $(1)$ and $(2)$ imply $(3)$ using Lemma \ref{Appendix:Lem:modulegenerators}: a vector space basis of $H^*(M)$ is a module basis of $E_\infty \cong E_1 \cong S(\mfg^*)^G\otimes H^*(M)$, which induces by Lemma \ref{Appendix:Lem:modulegenerators} a set of generators of the $S(\mfg^*)^G$-module $H^*_G(M)$ of the same cardinality. Then the same argument as in the proof above shows that this generating set is in fact a basis.

Having shown in this way that $(1)$, $(2)$ and $(3)$ are equivalent, the implication of $(4)$ and $(5)$ is immediate: $S^+(\mfg^*)^G\otimes H^*(M) \subset S(\mfg^*)^G\otimes H^*(M) \cong H^*_G(M)$ is a subspace of codimension $\dim H^*(M)$, contained in the kernel of the surjection $H^*_G(M)\to H^*(M)$. Thus, $S^+(\mfg^*)^G\cdot H^*_G(M)$ equals the kernel.
\end{rem}

\begin{ex}
Any trivial action is equivariantly formal. For a trivial action, we have $H^*_G(M) = S(\mfg^*)^G\otimes H^*(M)$ even as an algebra over $S(\mfg^*)^G$. 
\end{ex}

\begin{ex} \label{ex:oddcohomzeroeqformal}
More generally, in Corollary \ref{thm:hoddcollapse} we show that the spectral sequence of the action collapses at the $E_1$-term whenever $H^{\odd}(M)$ vanishes. Thus any Lie group action on such a manifold is equivariantly formal.
\end{ex}

\begin{ex}\label{ex:S1aufS2equivformal}
The simplest nontrivial example of an action on a compact manifold with vanishing odd-dimensional cohomology is the standard circle action on the $2$-sphere. In Example \ref{ex:S1aufS2} we identified its equivariant cohomology as
\[
H^*_{S^1}(S^2) \cong \{(f,g)\in \RR[u]\oplus \RR[u]\mid f(0)=g(0)\}.
\]
Any element $(f,g)\in H^*_{S^1}(S^2)$ can be written in the form
\[
(f,g) = \frac12(f+g,f+g) + \frac12(f-g,g-f) = \frac12(f+g)(1,1) + \frac{f-g}{2u} (u,-u),
\]
where we note that because $f(0)=g(0)$, the polynomial $f-g$ is divisible by $u$. Moreover, the elements $(1,1)$ and $(u,-u)$ are linearly independent over $\RR[u]$. Thus, $H^*_{S^1}(S^2)$ is a free module over $\RR[u]$, with basis $\{(1,1),(u,-u)\}$. Note that $H^*(S^2)$ is a graded vector space, with one-dimensional components in degree $0$ and $2$, which are precisely the degrees of the elements $(1,1)$ and $(u,-u)$.

By Theorem \ref{thm:bigthmequivformal} we can recover the ordinary cohomology of $S^2$ from the equivariant one: 
\[
H^*(S^2) \cong \frac{H^*_{S^1}(S^2)}{u\cdot \RR[u]\cdot (1,1) \oplus u\cdot \RR[u] \cdot (u,-u)}
\]
As a vector space, $H^*(S^2)$ is spanned by the cosets of $(1,1)$ and $(u,-u)$. The ring structure is the obvious one, where $(1,1)$ is the unit.

The same argument works in full generality: if one is able to determine a basis $e_1,\ldots,e_k$ of $H^*_G(M)$ as an $S(\mfg^*)^G$-module, for any equivariantly formal $G$-action, then $H^*(M)$ is, as a vector space, isomorphic to the real vector space with the $e_i$ as basis. The multiplicative structure is encoded in the abstract quotient \eqref{eq:ordinarycohomfromeqcohom}.
\end{ex}
\begin{cor}\label{cor:subgroupeqformal}
Consider an equivariantly formal action of a compact, connected Lie group $G$ on a manifold $M$. Then, for any compact, connected Lie subgroup $H\subset G$, the induced $H$-action on $M$ is equivariantly formal as well.
\end{cor}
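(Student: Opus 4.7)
The plan is to use the characterization from Theorem \ref{thm:bigthmequivformal}$(2)$: an action of a compact connected Lie group on a compact manifold is equivariantly formal if and only if the canonical map from equivariant to ordinary cohomology is surjective. Applying this to the $G$-action, the problem reduces to deducing that $H^*_H(M)\to H^*(M)$ is surjective from the surjectivity of $H^*_G(M)\to H^*(M)$.

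To link the two equivariant cohomologies I would construct a natural restriction map $r\colon C_G(M)\to C_H(M)$ by viewing an element $\omega\in C_G(M)$ as an equivariant polynomial map $\mfg\to \Omega(M)$ and restricting its domain along the inclusion $\mfh\hookrightarrow \mfg$. Since for $h\in H\subset G$ the operator $\Ad_h|_\mfh$ is the restriction of $\Ad_h$ on $\mfg$, $G$-equivariance of $\omega$ implies $H$-equivariance of $r(\omega)$, so indeed $r(\omega)\in C_H(M)$. That $r$ is a chain map is immediate from the pointwise formula in Definition \ref{def:equivariantdifferential}: for $X\in \mfh$ the fundamental vector field $\overline X$ is intrinsically defined on $M$ regardless of whether $X$ is regarded as lying in $\mfh$ or in $\mfg$, so
\[
(d_G\omega)(X)=d(\omega(X))-i_{\overline X}\omega(X)=(d_H r(\omega))(X).
\]

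The decisive observation is then that evaluating $r(\omega)$ at $0\in \mfh$ yields the same ordinary form as evaluating $\omega$ at $0\in \mfg$. Hence the triangle
\[
H^*_G(M)\longrightarrow H^*_H(M)\longrightarrow H^*(M)
\]
commutes, the diagonal being the canonical map for the $G$-action. Since by hypothesis this diagonal is surjective, the second arrow must be surjective as well, and a further application of Theorem \ref{thm:bigthmequivformal}$(2)$, this time to the $H$-action, gives equivariant formality of the induced action.

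I do not expect any substantive obstacle; the argument is essentially a functoriality observation, the only non-trivial content being the equivalence of spectral-sequence collapse with surjectivity onto ordinary cohomology, which has already been packaged in Theorem \ref{thm:bigthmequivformal}. The mildly delicate point worth verifying carefully is simply that the restriction lands in the $H$-invariant subspace, but this is forced by $H\subset G$.
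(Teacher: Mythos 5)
Your argument is exactly the paper's: restrict equivariant forms along $\mfh\hookrightarrow\mfg$ to get a chain map $C_G(M)\to C_H(M)$ through which the canonical map $H^*_G(M)\to H^*(M)$ factors, then invoke the surjectivity criterion of Theorem \ref{thm:bigthmequivformal}. The proposal is correct and just spells out the verifications the paper leaves implicit.
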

\begin{proof}
Restiction of an equivariant differential form $\omega\colon \mfg\to \Omega(M)$ to $\mfh$ defines a natural morphism $C_G(M)\to C_H(M)$ which descends to a map $H^*_G(M)\to H^*_H(M)$. Then the statement follows directly from Theorem  \ref{thm:bigthmequivformal} because the canonical map $H^*_G(M)\to H^*(M)$ factors through $H^*_H(M)$.
\end{proof}

Many important classes of actions are equivariantly formal.
\begin{ex}\label{ex:morsebotteqformal} Consider an action of a torus $T$ on a compact manifold $M$. If there exists a $T$-invariant Morse-Bott function $f\colon M\to \RR$ such that the critical set of $f$ is equal to the fixed point set $M^T$, then the action is equivariantly formal. Although not using precisely this formulation, the arguments to show this were given simultaneously by several authors, in  \cite{Duflot}, \cite{AtiyahBott}, \cite{Ginzburg}, and \cite{Kirwan}. Roughly, one shows, using an equivariant Thom isomorphism, that for every critical value $\kappa$ of $f$ one has a short exact sequence
\[
0 \longrightarrow H^*_T(M^{\kappa +\varepsilon},M^{\kappa-\varepsilon}) \longrightarrow H^*_T(M^{\kappa + \varepsilon})\longrightarrow H^*_T(M^{\kappa-\varepsilon})\longrightarrow 0
\]
in (Borel) equivariant cohomology, where for any $a$ we denote the respective sublevel set by $M^a = \{p\in M\mid f(p)\leq a\}$. This implies, inductively, that all $H^*_T(M^a)$ are free $S(\mft^*)$-modules. It was observed in  \cite{GT} that the same argument goes through in the context of Cohen-Macaulay actions, see Section \ref{sec:CohenMacaulay} below, for Morse-Bott functions whose critical set is the union of $b$-dimensional orbits, where $b$ is the lowest occurring orbit dimension.

For example, given any Hamiltonian torus action on a compact symplectic manifold, a generic component of the moment map $\mu\colon M\to \mft^*$ is a Morse-Bott function with this property, thus showing that any Hamiltonian torus action on a compact symplectic manifold is equivariantly formal.
\end{ex}

\begin{ex}
A natural class of actions is given by isotropy actions of homogeneous spaces, i.e., the action of a connected Lie group $H$ on a homogeneous space of the form $G/H$. If $G$ and $H$ are of equal rank, then even the $G$-action on $G/H$ is equivariantly formal, see Theorem \ref{thm:homogeneousspacesequalrank} below, so the $H$-action is, by Corollary \ref{cor:subgroupeqformal}, equivariantly formal as well. (In fact, in this case  $H^{\odd}(G/H)$ vanishes, see again Theorem \ref{thm:homogeneousspacesequalrank}, so that any action on $G/H$ is automatically equivariantly formal.)

In general it is an open question for which homogeneous spaces $G/H$ the isotropy action is equivariantly formal. This question was considered by Shiga and Shiga--Takahashi in \cite{Shiga, ShigaTakahashi}, where they found several sufficient conditions for equivariant formality of isotropy actions (see also \cite[Section 2.1]{Carlson} for a summary of these results). It was shown in the affirmative for symmetric spaces \cite{G}, more generally for spaces such that $H$ is the connected component of the fixed points of any automorphism of $G$ \cite{GH}, and for $\ZZ_2\times \ZZ_2$-symmetric spaces in \cite{Hagh}. Some examples of homogeneous spaces whose isotropy action is not equivariantly formal were given in \cite{ShigaTakahashi} and \cite{Shiga}, and the equivariantly formal homogeneous spaces with $H\cong S^1$ were classified in \cite{Carlson}. In \cite{CarlsonFok} it was shown that equivariant formality of the isotropy action of $G/H$ implies that $G/H$ is formal in the sense of rational homotopy theory.
\end{ex}

\section{Borel localization} \label{sec:borellocalization}
Is this section, as well as the next, we consider only actions of tori on compact manifolds. Recall that for an equivariant smooth map $f\colon N\to M$ between $T$-manifolds, we can consider its induced map $f^*\colon H^*_T(M)\to H^*_T(N)$ in equivariant cohomology. Both its kernel and its cokernel, $\coker f^* = H^*_T(N)/\im f^*$, are naturally $S(\mft^*)$-modules. Our goal in this section is to prove the following theorem (see \cite[Section (1.7)]{GKM} for information on the history of localization  theorems):

\begin{thm}[Borel localization theorem]\label{thm:borellocalization}
Consider, for an action of a torus $T$ on a compact manifold $M$, the restriction map
\[
H^*_T(M)\longrightarrow H^*_T(M^T).
\]
Its cokernel is a torsion module, and its kernel is the torsion submodule of $H^*_T(M)$.
\end{thm}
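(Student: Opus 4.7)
The plan is to reduce the theorem to the following key lemma: if $N$ is a compact $T$-manifold with $N^T = \emptyset$, then $H^*_T(N)$ is an $S(\mft^*)$-torsion module. One half of the theorem is immediate from Example \ref{ex:trivialaction}: since $T$ acts trivially on $M^T$, the module $H^*_T(M^T) = S(\mft^*) \otimes H^*(M^T)$ is free and in particular torsion-free, so the torsion submodule of $H^*_T(M)$ is automatically contained in the kernel of the restriction.

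To establish the key lemma, I would first analyse a single non-fixed orbit. If $p \in N$ is not fixed, then the identity component $T_p^0$ of the stabilizer is a proper subtorus of $T$. By the slice theorem a $T$-invariant tubular neighborhood of $T \cdot p$ equivariantly retracts onto the orbit $T/T_p$, and the commuting action principle of Remark \ref{rem:commutingactionprinciple}, applied to the left $T$-action together with the commuting free right $T_p$-action on $T$, yields $H^*_T(T/T_p) \cong H^*_{T_p}(\mathrm{pt}) \cong S(\mft_p^*)$, where the $S(\mft^*)$-module structure is given by restriction of polynomials along the inclusion $\mft_p \hookrightarrow \mft$. Since $\mft_p \subsetneq \mft$, any nonzero linear form $\ell \in \mft^*$ vanishing on $\mft_p$ annihilates $S(\mft_p^*)$, proving torsion in this local case. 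Covering the compact $N$ by finitely many such $T$-invariant tubular neighborhoods and iterating the equivariant Mayer-Vietoris sequence (Theorem \ref{thm:equivMV}) then yields the lemma, using that in a long exact sequence of $S(\mft^*)$-modules an extension of torsion modules is torsion (which holds because $S(\mft^*)$ is an integral domain).

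Granted the lemma, the theorem follows from a single application of equivariant Mayer-Vietoris. Choose an open $T$-invariant tubular neighborhood $U$ of $M^T$ equivariantly retracting onto $M^T$, together with an open $T$-invariant $V$ with $V \cap M^T = \emptyset$ and $U \cup V = M$. By the lemma $H^*_T(V)$ and $H^*_T(U \cap V)$ are torsion, while $H^*_T(U) \cong H^*_T(M^T)$. From the sequence
\[
\cdots \to H^{*-1}_T(U \cap V) \xrightarrow{\delta} H^*_T(M) \xrightarrow{(i_U^*,\, i_V^*)} H^*_T(U) \oplus H^*_T(V) \xrightarrow{j_U^* - j_V^*} H^*_T(U \cap V) \to \cdots
\]
one reads off both statements. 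For the cokernel: given $\xi \in H^*_T(M^T)$, pick $f \neq 0$ with $f \cdot j_U^*(\xi) = 0$; then $(f\xi, 0) \in \ker(j_U^* - j_V^*) = \mathrm{im}\,(i_U^*, i_V^*)$, exhibiting $f\xi$ as the restriction of a class. For the kernel: if $\eta$ restricts to zero on $M^T$, then $(i_U^*, i_V^*)(\eta) = (0, i_V^*\eta)$, and choosing $f \neq 0$ with $f \cdot i_V^*\eta = 0$ places $f\eta$ in $\mathrm{im}\,\delta$, whose domain is torsion; hence $f\eta$ and therefore $\eta$ itself is torsion.

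I expect the main technical difficulty to lie in the key lemma --- both in pinning down the module structure on $H^*_T(T/T_p)$ and in organizing the induction over a finite cover by tubes so that the iterated intersections remain under control. The Mayer-Vietoris wrap-up is then essentially formal.
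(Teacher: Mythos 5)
Your overall strategy --- tubes around non-fixed orbits, the computation $H^*_T(T/T_p)\cong S(\mft_p^*)$ with module structure given by restriction of polynomials, a finite-cover Mayer--Vietoris induction, and a final Mayer--Vietoris argument for a cover $M=U\cup V$ --- is the same as the paper's, and your concluding diagram chase is a perfectly good elementary substitute for the paper's device of localizing the exact sequence at $S=S(\mft^*)\setminus\{0\}$. There is, however, a genuine gap in the key lemma, exactly at the point you flag as the ``main technical difficulty.'' Two related problems arise. First, your lemma is stated for \emph{compact} $N$ with $N^T=\emptyset$, but you then apply it to $V$ and $U\cap V$, which are open and non-compact; to make this legitimate one must choose $V$ as the complement of the closure of a slightly smaller tube $U'$ around $M^T$ with $\overline{U'}\subset U$, so that the compact set $M\setminus U'$ carries a finite cover by tubes that \emph{restricts} to finite covers of $V$ and of $U\cap V$. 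Second, and more seriously, your local input is that a tube is $T$-homotopy equivalent to the orbit $T/T_p$, whose equivariant cohomology is torsion. That statement does not propagate through the induction: writing $Y_{j+1}=Y_j\cup W_j$, you need $H^*_T(Y_j\cap W_j)$ to be torsion, and $Y_j\cap W_j$ is an arbitrary invariant open subset of the tube $W_j$, in general not homotopy equivalent to an orbit.

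The repair is to upgrade the local statement from ``$T$-homotopy equivalent to an orbit'' to ``admits a $T$-equivariant map to an orbit $Tp$ with $p\notin M^T$'': for any such space $N$, the $S(\mft^*)$-module structure on $H^*_T(N)$ factors (by Lemma \ref{lem:algebrastructurefrommaptopoint}) through $H^*_T(Tp)\cong S(\mft_p^*)$, so any nonzero linear form vanishing on $\mft_p$ annihilates all of $H^*_T(N)$. This is precisely the paper's Lemma \ref{lem:maptoorbitsupport}. Since the retraction of a tube onto its core orbit restricts to every invariant subset, this formulation is stable under the intersections occurring in the induction and under passing to the open sets $V$ and $U\cap V$. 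You already hold both ingredients (the equivariant retraction, and the identification of the module structure on $H^*_T(T/T_p)$ as restriction of polynomials); they just need to be assembled into this stronger local lemma before the induction starts.
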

The proof we give is a version of the proof in \cite[Section 11]{GuilleminSternberg}, somewhat simplified by avoiding the usage of equivariant cohomology with compact support and the notion of support of a module.  Note that there exist far more general versions of the Borel localization theorem, see e.g.~\cite[Chapter 3]{AlldayPuppe} or \cite[Chapter 3, \S 2]{Hsiang}.

Recall the notion of localization from commutative algebra \cite[Chapter 3]{AtiyahMac}. For a multiplicatively closed subset $S$ of a commutative ring with unit $R$ we denote the localized ring by $S^{-1}R$, and the localization of an $R$-module $A$ by $S^{-1}A$. We will need the fact that localization is an exact functor, see \cite[Proposition 3.3]{AtiyahMac}. In case $A$ is a finitely generated module over an integral domain, and $S=R\setminus \{0\}$, the localization $S^{-1}A$ is a finite-dimensional vector space over the field $S^{-1}R$, and we call its dimension the \emph{rank} of $A$, denoted $\rk_R A$. 

With this notion the statement in the Borel localization theorem that both kernel and cokernel of the restriction map are torsion can be reformulated as follows:
\begin{cor} \label{cor:localizedborel}
For any action of a torus $T$ on a compact manifold, the localized map
\[
S^{-1}H^*_T(M) \longrightarrow S^{-1}H^*_T(M^T),
\]
where $S = S(\mft^*)\setminus \{0\}$, is an isomorphism. The rank of the $S(\mft^*)$-module $H^*_T(M)$ is
\[
\rank_{S(\mft^*)} H^*_T(M) = \dim H^*(M^T).
\]
\end{cor}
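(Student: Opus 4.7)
The plan is to deduce this corollary almost immediately from Theorem \ref{thm:borellocalization}, combined with the exactness of localization and the computation of equivariant cohomology for trivial actions from Example \ref{ex:trivialaction}. There is no real obstacle; the substantive work is already contained in the Borel localization theorem itself.

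Write $r\colon H^*_T(M)\to H^*_T(M^T)$ for the restriction map and $K:=\ker r$, $C:=\coker r$. I would begin from the four-term exact sequence of $S(\mft^*)$-modules
\[
0\longrightarrow K\longrightarrow H^*_T(M)\overset{r}{\longrightarrow} H^*_T(M^T)\longrightarrow C\longrightarrow 0.
\]
By Theorem \ref{thm:borellocalization} both $K$ and $C$ are torsion modules. Since $S(\mft^*)$ is an integral domain and $S=S(\mft^*)\setminus\{0\}$ consists precisely of its nonzero-divisors, every element of a torsion module is annihilated by some element of $S$, so $S^{-1}K=0=S^{-1}C$. Applying the exact functor $S^{-1}(-)$ to the sequence collapses it to
\[
0\longrightarrow S^{-1}H^*_T(M)\overset{S^{-1}r}{\longrightarrow} S^{-1}H^*_T(M^T)\longrightarrow 0,
\]
which is the first claim.

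For the rank formula I would use that $T$ acts trivially on $M^T$. Because $T$ is abelian, the coadjoint representation is trivial and $S(\mft^*)^T=S(\mft^*)$, so Example \ref{ex:trivialaction} yields an $S(\mft^*)$-module isomorphism
\[
H^*_T(M^T)\cong S(\mft^*)\otimes_\RR H^*(M^T).
\]
Since $M$ is compact, so is $M^T$, hence $H^*(M^T)$ is finite-dimensional. Localizing gives
\[
S^{-1}H^*_T(M^T)\cong S^{-1}S(\mft^*)\otimes_\RR H^*(M^T),
\]
a vector space of dimension $\dim H^*(M^T)$ over the fraction field $S^{-1}S(\mft^*)$. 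Combined with the isomorphism $S^{-1}H^*_T(M)\cong S^{-1}H^*_T(M^T)$ just established, this forces $\rank_{S(\mft^*)}H^*_T(M)=\dim H^*(M^T)$, completing the proof.
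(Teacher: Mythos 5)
Your proposal is correct and follows essentially the same route as the paper, which states the corollary as a direct reformulation of Theorem \ref{thm:borellocalization}: torsion modules die under localization at $S$, localization is exact, and $H^*_T(M^T)\cong S(\mft^*)\otimes H^*(M^T)$ is free of rank $\dim H^*(M^T)$ by the trivial-action computation. The only detail worth adding is that $H^*_T(M)$ is finitely generated over $S(\mft^*)$ (so that its rank is defined in the sense the paper uses), which holds for compact $M$ by the finite-generation result in the appendix.
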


Before embarking on the proof, we need to calculate the equivariant cohomology of an orbit $Tp=T/T_p$. (Here we consider only tori -- a more general statement about the equivariant cohomology of transitive actions is shown below in Proposition \ref{prop:eqcohomtransitive}.) Let $\mft'\subset \mft$ be a complement of $\mft_p$ in $\mft$ such that $\exp(\mft')$ is a subtorus $T'$ of $T$. Then $S(\mft^*) = S(\mft_p^*)\otimes S(\mft'^*)$. The Cartan complex $C_T(T/T_p)$ can be written as
\[
C_T(T/T_p) = S(\mft_p^*)\otimes S(\mft'^*)\otimes \Omega(T/T_p)^T,
\]
and because $T_p$ acts trivially on all of $T/T_p$, the $T$-invariance of a differential form on $T/T_p$ is equivalent to the $T'$-invariance. Therefore, we have
\[
C_T(T/T_p) = S(\mft_p^*)\otimes (S(\mft'^*)\otimes \Omega(T/T_p)^{T'}).
\]
The equivariant differential $d_T$ on $C_T(T/T_p)$ acts as $d_T = 1\otimes d_{T'}$, because the $T_p$-fundamental vector fields are zero on $T/T_p$. Thus,
\[
H^*_T(T/T_p) = S(\mft_p^*)\otimes H^*_{T'}(T/T_p).
\]
Because the $T'$-action on $T/T_p$ is locally free and transitive, we have $H^*_{T'}(T/T_p) = H^*(\{\mathrm{pt}\}) = \RR$. Thus,
\[
H^*_T(T/T_p) = S(\mft_p^*)
\]
as $S(\mft^*)$-algebras, where the $S(\mft^*)$-algebra structure is induced by the natural restriction $S(\mft^*)\to S(\mft_p^*)$.

In particular, we see that if $\mft_p\neq \mft$ (i.e., if $p$ is not a $T$-fixed point), then $H^*_T(T/T_p)$ is a torsion module: Let $f\in S(\mft^*)$ be a nonzero linear form on $\mft$ that vanishes on $\mft_{p}$; then multiplication with $f$ is the zero map on $H^*_T(T/T_p)$.

\begin{lemma}\label{lem:maptoorbitsupport}
Let $M$ be a (not necessarily compact) manifold that admits a $T$-equivariant map $\varphi\colon M\to Tp$, where $p\in M$ is not a fixed point of the $T$-action. Then $H^*_T(M)$ is a torsion module.
\end{lemma}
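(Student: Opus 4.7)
The plan is to reduce this to the computation of $H^*_T(T/T_p)$ carried out just before the lemma, via naturality of the $S(\mft^*)$-algebra structure.

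First I would observe that by Lemma \ref{lem:algebrastructurefrommaptopoint} the canonical algebra map $i \colon S(\mft^*) \to H^*_T(M)$ is nothing but the pullback of $M \to \{\mathrm{pt}\}$, and since this map factors as $M \xrightarrow{\varphi} Tp \to \{\mathrm{pt}\}$, by functoriality we have a commutative diagram
\[
S(\mft^*) \longrightarrow H^*_T(Tp) \xrightarrow{\varphi^*} H^*_T(M),
\]
whose composition equals $i$. The preceding computation identifies the first arrow with the natural restriction $S(\mft^*) \to S(\mft_p^*)$.

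Now I would use the hypothesis that $p$ is not a $T$-fixed point, so $\mft_p \subsetneq \mft$. Hence there is a nonzero linear form $f \in \mft^* \subset S(\mft^*)$ vanishing on $\mft_p$. Such an $f$ restricts to zero in $S(\mft_p^*) = H^*_T(Tp)$, and applying $\varphi^*$ gives $i(f) = 0 \in H^*_T(M)$. Therefore, for every class $\alpha \in H^*_T(M)$,
\[
f \cdot \alpha = i(f) \cup \alpha = 0,
\]
which exhibits $\alpha$ as a torsion element annihilated by the nonzero element $f \in S(\mft^*)$. Thus $H^*_T(M)$ is a torsion module.

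There is no real obstacle: the content is entirely in the previous orbit computation, and the lemma is just the naturality/functoriality upgrade. The only thing to be a bit careful about is to cite the right fact (Lemma \ref{lem:algebrastructurefrommaptopoint}) to ensure that the $S(\mft^*)$-module structure on $H^*_T(M)$ really does factor through $H^*_T(Tp)$; once that is in place, a single annihilating linear form handles every class simultaneously.
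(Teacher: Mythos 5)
Your proof is correct and follows essentially the same route as the paper: factoring the map to a point through $Tp$ via Lemma \ref{lem:algebrastructurefrommaptopoint}, using the computation $H^*_T(Tp)\cong S(\mft_p^*)$, and annihilating all of $H^*_T(M)$ with a single nonzero linear form vanishing on $\mft_p$. There is nothing to add.
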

\begin{proof}
We consider the maps
\[
M \overset{\varphi}\longrightarrow Tp \longrightarrow \{pt\}.
\]
In equivariant cohomology they induce homomorphisms
\[
S(\mft^*)\longrightarrow H^*_T(Tp)\overset{\varphi^*}\longrightarrow H^*_T(M).
\]
Because of Lemma \ref{lem:algebrastructurefrommaptopoint}, the $S(\mft^*)$-algebra structure of $H^*_T(M)$ is induced from the unique map to a point, which thus factors through $H^*_T(Tp)$. Above, we computed $H^*_T(Tp) \cong S(\mft_p^*)$, where the $S(\mft^*)$-algebra structure is given by the natural restriction map. Every $f\in S(\mft^*)$ with $f|_{\mft_p}=0$ thus annihilates $H^*_T(M)$, because it already defines the zero element in $H^*_T(Tp)$. 
\end{proof}
Any tubular neighborhood $U$ of an orbit $T p$ admits a $T$-equivariant (retraction) map to $Tp$, so Lemma \ref{lem:maptoorbitsupport} applies to any open $T$-invariant subset of $U$.

\begin{proof}[Proof of Theorem \ref{thm:borellocalization}]
The idea of the proof is to use the equivariant Mayer-Vietoris sequence for a cover $M=U\cup V$, where $U$ is a tubular neighborhood of $M^T$, and $V$ an open $T$-invariant subset of $M\setminus M^T$, with the following property: both $V$ and $U\cap V$ can be covered by finitely many $T$-invariant open neighborhoods to which Lemma \ref{lem:maptoorbitsupport} applies, in the sense that they admit an equivariant map to an orbit in $M\setminus M^T$. Let us first construct this covering: we choose two tubular neighborhoods $M^T\subset U'\subset U$ with $\overline{U'}\subset U$. We put $V:=M\setminus \overline{U'}$. As $M\setminus U'$ is compact, it can be covered by finitely many tubular neighborhoods of orbits in $M\setminus M^T$ (none of which intersects $M\setminus M^T$). This finite cover restricts to finite covers of $V$ and $U\cap V$. The open sets in this cover are open subsets of tubular neighborhoods of orbits of points in $M\setminus M^T$, so Lemma \ref{lem:maptoorbitsupport} applies to them.

Now, consider any open subset $W\subset M$ which is a finite union $W = W_1\cup \cdots \cup W_r$ of open $T$-invariant open neighborhoods $W_i$ that admit an equivariant map $f_i\colon W_i\to Tp_i$, where $p_i\in M\setminus M^T$. By Lemma \ref{lem:maptoorbitsupport} we have that $H^*_T(W_i)$ is a torsion module for all $i$. Put $Y_j:=W_1\cup \cdots \cup W_{j-1}$, so that $Y_{j+1} = Y_j\cup W_j$. It follows by induction that 
$H^*_T(Y_j)$ is a torsion module, using the portion
\[
H^*_T(Y_j\cap W_j) \longrightarrow H^*_T(Y_{j+1}) \longrightarrow H^*_T(Y_j) \oplus H^*_T(W_j)
\]
of the equivariant Mayer-Vietoris sequence. Note that we used that with $W_j$ also the intersection $Y_j\cap W_j$ admits an equivariant map to an orbit in $M\setminus M^T$, hence Lemma \ref{lem:maptoorbitsupport} also applies to this set. We have thus shown that $H^*_T(W)$ is a torsion module as well.

This observation in particular applies to the sets $V$ and $U\cap V$ from the open cover $M=U\cup V$ constructed above. Using that $H^*_T(U) \cong H^*_T(M^T)$, the equivariant Mayer-Vietoris sequence of this cover reads
\[
\cdots \longrightarrow H^*_T(U\cap V) \longrightarrow H^*_T(M) \overset{(i^*,j^*)}\longrightarrow H^*_T(M^T) \oplus H^*_T(V) \longrightarrow H^*_T(U\cap V)\longrightarrow \cdots,
\]
where $j\colon V\to M$ is the natural inclusion map. Localizing this exact sequence at $S=S(\mft^*)\setminus \{0\}$, the terms $S^{-1}H^*_T(U\cap V)$ and $S^{-1}H^*_T(V)$ vanish, so that we obtain an isomorphism
\[
S^{-1}H^*_T(M)\longrightarrow S^{-1}H^*_T(M^T)
\]
as in the formulation in Corollary \ref{cor:localizedborel}. That the kernel of the restriction map $H^*_T(M)\to H^*_T(M^T)$ contains the torsion submodule of $H^*_T(M)$ is clear because $H^*_T(M^T)$ is a free module.
\end{proof}

\begin{rem}\label{rem:borelexplanationnofixedpoints} In case the $T$-action has no fixed points, $M^T=\emptyset$. By convention, we understand $H^*_T(\emptyset)=0$.
\end{rem}
\begin{cor} 
$H^*_T(M)$ is a torsion module if and only if the $T$-action has no fixed points.
\end{cor}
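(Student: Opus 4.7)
The plan is to derive both implications directly from the Borel localization theorem (Theorem \ref{thm:borellocalization}), using the observation that $H^*_T(M^T)$ is a free $S(\mft^*)$-module of rank $\dim H^*(M^T)$ since $T$ acts trivially on $M^T$ (so by Example \ref{ex:trivialaction} we have $H^*_T(M^T) \cong S(\mft^*)\otimes H^*(M^T)$).

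For the direction ``no fixed points $\Rightarrow$ torsion'', I would invoke Remark \ref{rem:borelexplanationnofixedpoints}: if $M^T=\emptyset$, then $H^*_T(M^T)=0$, so the restriction map $H^*_T(M)\to H^*_T(M^T)$ is the zero map and its kernel is all of $H^*_T(M)$. By Borel localization this kernel equals the torsion submodule, hence $H^*_T(M)$ is itself a torsion module.

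For the converse direction, I would argue by contrapositive: suppose $M^T\neq\emptyset$. Then $H^*_T(M^T)\cong S(\mft^*)\otimes H^*(M^T)$ is a nonzero free $S(\mft^*)$-module, so its rank is positive and $S^{-1}H^*_T(M^T)\neq 0$ for $S=S(\mft^*)\setminus\{0\}$. By Corollary \ref{cor:localizedborel} the localized restriction map $S^{-1}H^*_T(M)\to S^{-1}H^*_T(M^T)$ is an isomorphism, so $S^{-1}H^*_T(M)\neq 0$, meaning $H^*_T(M)$ is not a torsion module.

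No real obstacle is expected here: the entire content of the corollary is a repackaging of Borel localization plus the trivial-action computation of $H^*_T(M^T)$, and the only subtle point is remembering the empty-set convention of Remark \ref{rem:borelexplanationnofixedpoints} to handle the forward direction cleanly.
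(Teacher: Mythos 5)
Your proof is correct and follows essentially the same route as the paper: both directions are immediate consequences of the Borel localization theorem together with the fact that $H^*_T(M^T)$ is a free $S(\mft^*)$-module (and the convention $H^*_T(\emptyset)=0$). The only cosmetic difference is in the converse: the paper tracks the unit class $1\in H^*_T(M)$, which restricts to $1\neq 0$ in the torsion-free module $H^*_T(M^T)$, whereas you pass to the localized isomorphism of Corollary \ref{cor:localizedborel} — both are equivalent instances of the same localization argument.
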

\begin{proof} If the $T$-action has no fixed points, then we have just observed that $H^*_T(M)$ is torsion (see Remark \ref{rem:borelexplanationnofixedpoints}). If there are fixed points, then $1\in H^*_T(M)$ is mapped to $1\neq 0\in H^*_T(M^T)$. Because $H^*_T(M^T)$ is a free and hence torsion-free $S(\mft^*)$-module, $1$ is also not a torsion element in $H^*_T(M)$.
\end{proof}

\begin{ex} The Borel localization theorem is wrong without any assumptions on the space acted on. Consider the Borel model (see Remark \ref{rem:Borelmodel}) of the free action of a torus $T$ on the contractible space $ET$. As the projection
\[
ET\times_{T} ET\longrightarrow BT
\]
on the first factor is a homotopy equivalence (it is a fibration with contractible fiber $ET$), the map 
\[
H^*(BT;\RR)\longrightarrow H^*_{T}(ET;\RR)
\]
defining the $H^*(BT;\RR)$-algebra structure is an isomorphism. In particular, the equivariant cohomology $H^*_{T}(ET;\RR)$ is a free $H^*(BT;\RR)$-module although the $T$-action has no fixed points.
\end{ex}

\begin{cor}\label{cor:eqformalinjection}
For an equivariantly formal action of a torus on a compact manifold $M$, the inclusion $M^T\to M$ induces an injective $S(\mft^*)$-algebra homomorphism
\[
H^*_T(M) \longrightarrow H^*_T(M^T) = S(\mft^*)\otimes H^*(M^T).
\]
\end{cor}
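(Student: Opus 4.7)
The plan is to combine the Borel localization theorem (Theorem \ref{thm:borellocalization}) with the characterization of equivariant formality given in Theorem \ref{thm:bigthmequivformal}. That the map is an $S(\mft^*)$-algebra homomorphism and that $H^*_T(M^T) = S(\mft^*)\otimes H^*(M^T)$ hold follow respectively from functoriality of the Cartan model and from Example \ref{ex:trivialaction} (since $T$ acts trivially on $M^T$, noting also that $S(\mft^*)^T = S(\mft^*)$ as $T$ is connected and acts trivially on its own Lie algebra). So the only content to establish is injectivity.

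First I would invoke the Borel localization theorem (Theorem \ref{thm:borellocalization}), which identifies the kernel of the restriction homomorphism $H^*_T(M)\to H^*_T(M^T)$ with the $S(\mft^*)$-torsion submodule of $H^*_T(M)$. Next I would apply the equivalence of $(1)$ and $(3)$ in Theorem \ref{thm:bigthmequivformal}: equivariant formality of the $T$-action on the compact manifold $M$ implies that $H^*_T(M)$ is isomorphic, as an $S(\mft^*)$-module, to $S(\mft^*)\otimes H^*(M)$, and in particular is a free $S(\mft^*)$-module. Since $S(\mft^*)$ is a polynomial ring and hence an integral domain, any free $S(\mft^*)$-module is torsion-free, so the torsion submodule of $H^*_T(M)$ is zero. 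Combining these two facts, the kernel of the restriction map vanishes, proving injectivity.

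No step here is a serious obstacle; the result is essentially a formal consequence of the two theorems already proved. The only minor subtlety worth flagging is that one needs $T$ to be connected in order to apply Theorem \ref{thm:bigthmequivformal} directly (which is part of the hypothesis of equivariant formality as stated there), and that one must note that for a connected torus the coadjoint action is trivial, so $S(\mft^*)^T = S(\mft^*)$ and the module-theoretic statements of Theorem \ref{thm:bigthmequivformal} apply verbatim to the ring $S(\mft^*)$ itself.
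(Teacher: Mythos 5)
Your argument is correct and is exactly the intended one: the paper states this as an immediate corollary of the Borel localization theorem (kernel of restriction equals the torsion submodule) combined with condition $(3)$ of Theorem \ref{thm:bigthmequivformal} (freeness, hence torsion-freeness, of $H^*_T(M)$ over the polynomial ring $S(\mft^*)$). Your supplementary remarks on the algebra structure and on $S(\mft^*)^T = S(\mft^*)$ are also accurate.
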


One can therefore try to understand the equivariant cohomology of an equivariantly formal action by understanding its image in $H^*_T(M^T)$. 
\begin{ex} We did this already for the standard circle action on $S^2$, with fixed point set the north and south pole $N,S$,  see Example \ref{ex:S1aufS2}, in which we confirmed ad hoc that the inclusion $H^*_{S^1}(S^2)\to H^*(\{N,S\})= \RR[u]\oplus \RR[u]$ is injective, and has as image the $\RR[u]$-subalgebra $\{(f,g)\mid f(0)=g(0)\}$.
\end{ex}
We will give an example with nondiscrete fixed point set below (see Example \ref{ex:inclusionfixedsetconjugation}).

In Example \ref{ex:oddcohomzeroeqformal} we observed that any action on a manifold with vanishing odd-dimensional cohomology is equivariantly formal. If the fixed point set of the torus action is finite, then this is even an equivalence.
\begin{prop}\label{prop:equivformalfinitefixedpointset}
Consider an equivariantly formal action of a torus $T$ on a compact manifold $M$. If the fixed point set of the action is finite, then $H^{\odd}(M) = 0$.
\end{prop}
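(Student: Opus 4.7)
The plan is to combine the injectivity statement from the Borel localization theorem (Corollary \ref{cor:eqformalinjection}) with the isomorphism of graded modules furnished by equivariant formality (Theorem \ref{thm:bigthmequivformal}). The key observation is a parity statement about $H^*_T(M^T)$: since $M^T$ is a finite set of fixed points, the Cartan complex of $M^T$ is just a direct sum of copies of $S(\mft^*)$, one for each fixed point. Because $\mft^*$ is placed in degree $2$ in our grading convention, this means
\[
H^*_T(M^T) \;=\; \bigoplus_{p\in M^T} S(\mft^*)
\]
is concentrated entirely in even degrees.

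Next, I would apply Corollary \ref{cor:eqformalinjection}: equivariant formality of the $T$-action on the compact manifold $M$ ensures that the restriction $H^*_T(M)\to H^*_T(M^T)$ is injective. Combined with the parity observation above, this forces $H^{\odd}_T(M) = 0$.

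Finally, I would extract the statement about ordinary cohomology from the graded $S(\mft^*)$-module isomorphism in part $(3)$ of Theorem \ref{thm:bigthmequivformal}: equivariant formality gives
\[
H^*_T(M) \;\cong\; S(\mft^*)\otimes H^*(M)
\]
as graded vector spaces. Taking the odd degree part of the right-hand side and using that $S(\mft^*)$ is concentrated in even degrees yields $H^{\odd}_T(M)\cong S(\mft^*)\otimes H^{\odd}(M)$. Combining with the vanishing of the left-hand side from the previous step gives $S(\mft^*)\otimes H^{\odd}(M) = 0$, and since $S(\mft^*)$ contains $1$, this forces $H^{\odd}(M)=0$. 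No real obstacle is expected; the argument is essentially a degree-parity bookkeeping made possible by having already assembled the two main inputs (Corollary \ref{cor:eqformalinjection} and Theorem \ref{thm:bigthmequivformal}) in earlier sections.
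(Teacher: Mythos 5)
Your proposal is correct and follows the paper's own argument essentially verbatim: the paper likewise uses Corollary \ref{cor:eqformalinjection} to embed $H^*_T(M)$ into $H^*_T(M^T)=S(\mft^*)\otimes H^*(M^T)$, notes that this target is concentrated in even degrees since $M^T$ is finite and $S(\mft^*)$ sits in even degrees, and then invokes the graded isomorphism $H^*_T(M)\cong S(\mft^*)\otimes H^*(M)$ from Theorem \ref{thm:bigthmequivformal} to conclude $H^{\odd}(M)=0$. No gaps.
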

\begin{proof}
By Corollary \ref{cor:eqformalinjection} we have an injection $H^*_T(M) \to S(\mft^*)\otimes H^*(M^T)$. As $M^T$ is a finite set, $H^*(M^T)$ is concentrated in degree zero. The polynomial ring $S(\mft^*)$ is concentrated in even degrees, so that $H^{\odd}_T(M) = 0$. But by equivariant formality we have
\[
H^*_T(M) \cong S^*(\mft^*)\otimes H^*(M),
\]
so necessarily $H^{\odd}(M)=0$ as well.
\end{proof}

\section{Consequences for the fixed point set}\label{sec:fixedpoints}
Recall that the \emph{Euler characteristic} of a manifold $M$ with finite-dimensional cohomology $H^*(M)$ is defined as
\[
\chi(M):= \dim H^{\even}(M)-\dim H^{\odd}(M).
\]
More generally, one can define the Euler characteristic for any finite-dimensional \emph{$\ZZ_2$-graded vector space} $V$, i.e., a vector space of the form $V=V^{\even}\oplus V^{\odd}$, where we call the elements of $V^{\even}$ and $V^{\odd}$ even and odd elements.
\begin{defn}
Let $V=V^{\even}\oplus V^{\odd}$ be a finite-dimensional $\ZZ_2$-graded vector space. Then the \emph{Euler characteristic} of $V$ is
\[
\chi(V) = \dim V^{\even}-\dim V^{\odd}.
\]
\end{defn}
A fundamental property of the Euler characteristic is that it is preserved under taking cohomology. We omit the (standard) proof.

\begin{lemma}\label{lem:eulercharofcohomology}
Let $V= V^{\even}\oplus V^{\odd}$ be a finite-dimensional vector space over a field $K$, and $d\colon V\to V$ a $K$-linear map that 
\begin{enumerate}
\item is a differential, i.e., $d^2=0$, and 
\item is an \emph{odd} endomorphism, i.e., restricts to maps $d^{\even}:V^{\even}\to V^{\odd}$ and $d^{\odd}\colon V^{\odd}\to V^{\even}$. 
\end{enumerate}
Then
\[
\chi(V) = \chi(H(V,d)),
\]
where $H(V,d) = \ker d/\im d$ (which naturally is a $\ZZ_2$-graded vector space).
\end{lemma}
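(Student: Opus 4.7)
The plan is to reduce the claim to elementary linear algebra by splitting $d$ according to parity and then applying the rank-nullity theorem. Since $d$ is odd, it decomposes as two $K$-linear maps $d^{\even}\colon V^{\even}\to V^{\odd}$ and $d^{\odd}\colon V^{\odd}\to V^{\even}$, and the condition $d^2=0$ amounts to $\im d^{\even}\subset \ker d^{\odd}$ and $\im d^{\odd}\subset \ker d^{\even}$. Consequently, the $\ZZ_2$-graded cohomology decomposes as
\[
H^{\even}(V,d) = \ker d^{\even}/\im d^{\odd},\qquad H^{\odd}(V,d) = \ker d^{\odd}/\im d^{\even}.
\]

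Next I would compute $\chi(H(V,d))$ from these two descriptions, obtaining
\[
\chi(H(V,d)) = \dim\ker d^{\even} - \dim\im d^{\odd} - \dim\ker d^{\odd} + \dim\im d^{\even}.
\]
On the other hand, rank-nullity applied separately to $d^{\even}$ and $d^{\odd}$ gives $\dim V^{\even} = \dim\ker d^{\even}+\dim\im d^{\even}$ and $\dim V^{\odd} = \dim\ker d^{\odd}+\dim\im d^{\odd}$, so taking the alternating sum yields exactly the same expression for $\chi(V)$. Comparing the two identities finishes the proof.

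There is no real obstacle here; the only thing to be careful about is bookkeeping of which image lands in which kernel, so that the quotients defining $H^{\even}$ and $H^{\odd}$ are correctly formed. The finite-dimensionality hypothesis is used only to ensure that rank-nullity applies and that all dimensions appearing are finite, so that cancellations in the alternating sum are legitimate.
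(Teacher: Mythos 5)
Your argument is correct and is exactly the standard rank--nullity computation that the paper alludes to when it writes ``we omit the (standard) proof'': the parity decomposition of $d$, the inclusions $\im d^{\even}\subset\ker d^{\odd}$ and $\im d^{\odd}\subset\ker d^{\even}$ forced by $d^2=0$, and the cancellation in the alternating sum are all in order. Nothing is missing.
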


The following theorem was originally shown by Kobayashi \cite{Kobayashi} without the usage of equivariant cohomology. We present it here as a corollary of the Borel localization theorem.

\begin{thm} \label{thm:eulercharfixedpoints}
Consider the action of a torus $T$ on a compact manifold $M$. Then 
\[
\chi(M) =\chi(M^T).
\]
\end{thm}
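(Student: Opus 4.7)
The plan is to interpret both $\chi(M)$ and $\chi(M^T)$ as the same $\mathbb{Z}_2$-graded rank Euler characteristic of a single $S(\mft^*)$-module, namely $H^*_T(M)$, and then bridge the two via Borel localization. Because the multiplicative set $S = S(\mft^*)\setminus\{0\}$ lives entirely in even total degree, localization preserves the $\mathbb{Z}_2$-grading (even vs.\ odd total degree). For any finitely generated $\mathbb{Z}_2$-graded $S(\mft^*)$-module $A$ I will therefore set
\[
\chi_{\rank}(A) := \rank_{S(\mft^*)} A^{\even} - \rank_{S(\mft^*)} A^{\odd},
\]
which equals $\chi(S^{-1}A)$ viewed as a $\mathbb{Z}_2$-graded vector space over the fraction field $K$ of $S(\mft^*)$. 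The target equality will follow once I show $\chi_{\rank}(H^*_T(M))$ equals $\chi(M)$ on the one hand and $\chi(M^T)$ on the other.

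A preliminary input is that $H^*_T(M)$ is finitely generated over $S(\mft^*)$, which I would read off from the spectral sequence of the Cartan model: $E_1 = S(\mft^*)\otimes H^*(M)$ is finitely generated since $M$ is compact, Noetherianity of $S(\mft^*)$ propagates finite generation to every subsequent page and to $E_\infty$, and $H^*_T(M)$ is built from $E_\infty$ by a filtration that is finite in each total degree. Granted this, the fixed-point side is immediate: since $T$ acts trivially on $M^T$, Example~\ref{ex:trivialaction} gives $H^*_T(M^T)\cong S(\mft^*)\otimes H^*(M^T)$, hence $\chi_{\rank}(H^*_T(M^T)) = \chi(M^T)$. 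The Borel localization theorem, in the form of Corollary~\ref{cor:localizedborel}, supplies a $\mathbb{Z}_2$-graded $K$-linear isomorphism $S^{-1}H^*_T(M)\cong S^{-1}H^*_T(M^T)$, so $\chi_{\rank}(H^*_T(M)) = \chi(M^T)$.

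The remaining ingredient, $\chi_{\rank}(H^*_T(M)) = \chi(M)$, I would obtain by running back along the Cartan spectral sequence. At the first page $\chi_{\rank}(E_1) = \chi(M)$, since $S(\mft^*)$ is concentrated in even degrees and so $E_1^{\even} = S(\mft^*)\otimes H^{\even}(M)$ and similarly for odd. Each $d_r$ raises total degree by one and is therefore odd with respect to the $\mathbb{Z}_2$-grading; a rank analogue of Lemma~\ref{lem:eulercharofcohomology}, proved by the same $\ker/\im$ bookkeeping together with additivity of rank on short exact sequences of finitely generated modules over the integral domain $S(\mft^*)$, gives $\chi_{\rank}(E_{r+1}) = \chi_{\rank}(E_r)$. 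Iterating, $\chi_{\rank}(E_\infty) = \chi(M)$, and the same rank additivity applied to the (degreewise finite) filtration of $H^*_T(M)$ with associated graded $E_\infty$ yields $\chi_{\rank}(H^*_T(M)) = \chi(M)$. Combining with the previous paragraph gives $\chi(M) = \chi(M^T)$.

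The step I expect to require the most care is the spectral-sequence bookkeeping: one must cleanly separate the $\mathbb{Z}$-grading by polynomial-plus-form degree from the $\mathbb{Z}_2$-grading used to define $\chi_{\rank}$, confirm that the filtration on each $H^n_T(M)$ is finite so that rank additivity really applies, and note that every page remains finitely generated (so that ranks are integers and additive). None of this requires new ideas beyond Noetherianity of $S(\mft^*)$ and compactness of $M$, but it is precisely where a loose argument could mis-identify the associated graded or neglect the possibility of nonzero rank in infinitely many total degrees.
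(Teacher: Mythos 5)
Your proposal is correct and follows essentially the same route as the paper: Borel localization identifies the $\ZZ_2$-graded rank Euler characteristic of $H^*_T(M)$ with $\chi(M^T)$, while invariance of that quantity along the pages of the Cartan spectral sequence identifies it with $\chi(M)$. The only cosmetic differences are that the paper localizes each page at $S=S(\mft^*)\setminus\{0\}$ and applies Lemma \ref{lem:eulercharofcohomology} to $\ZZ_2$-graded vector spaces over the fraction field rather than working with ranks directly, and it settles the comparison between $E_\infty$ and $H^*_T(M)$ by citing Corollary \ref{Appendix:Cor:RankGleich} (proved via Hilbert--Poincar\'e series) instead of your equally valid rank-additivity argument on the degreewise finite filtration.
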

\begin{proof}
By Corollary \ref{cor:localizedborel} we have an isomorphism
\[
S^{-1}H^*_T(M) \longrightarrow S^{-1}H^*_T(M^T),
\]
where $S=S(\mft^*)\setminus \{0\}$. The localized equivariant cohomology is not $\ZZ$-graded anymore, but the dichotomy between even and odd degree elements survives after localization. This isomorphism thus restricts to isomorphisms of the respective even and odd parts. As $H^*_T(M^T) = S(\mft^*)\otimes H^*(M^T)$, we therefore have (writing $R=S(\mft^*)$)
\begin{align*}
\chi(M^T) &= \dim_\RR H^{\even}(M^T) - \dim_\RR H^{\odd}(M^T) \\
&= \dim_{S^{-1}R} S^{-1}R\otimes H^{\even}(M^T) - \dim_{S^{-1}R} S^{-1}R \otimes H^{\odd}(M^T)\\
&= \dim_{S^{-1}R} S^{-1}H^{\even}_T(M^T) - \dim_{S^{-1}R} S^{-1}H^{\odd}_T(M^T)\\
&= \dim_{S^{-1}R} S^{-1}H^{\even}_T(M) - \dim_{S^{-1}R} S^{-1}H^{\odd}_T(M).
\end{align*}
We now use the the spectral sequence of the Cartan model to relate this to $\chi(M)$. As observed in Section \ref{Appendix:Section:ModuleStructure} each page $E_r$ of the spectral sequence naturally is an $R$-module, and the differentials are $R$-linear. We now forget the bigrading of the $E_r$, and keep only the total degree. The differential $d_r$, which was of bidegree $(r,-r+1)$, is then an ordinary differential which increases degree by one. Localizing each page of the spectral sequence, we then obtain $\ZZ_2$-graded vector spaces $E_r$, and the differentials $d_r\colon E_r\to E_r$ become odd endomorphisms. Then, each $E_{r+1}$ is the cohomology of $(E_r,d_r)$, in the category of $\ZZ_2$-graded vector spaces. Applying Lemma \ref{lem:eulercharofcohomology} multiple times (noting that there can only be finitely many nontrivial differentials), we compute
\begin{align*}
\chi(M) &=  \dim_\RR H^{\even}(M) - \dim_\RR H^{\odd}(M)\\
&= \dim_{S^{-1}R} S^{-1}R \otimes H^{\even}(M) - \dim_{S^{-1}R} S^{-1}R \otimes H^{\odd}(M)\\
&= \dim_{S^{-1}R} S^{-1}E_1^{\even} - \dim_{S^{-1}R} S^{-1}E_1^{\odd}\\
&= \dim_{S^{-1}R} S^{-1}E_\infty^{\even} - \dim_{S^{-1}R} S^{-1}E_\infty^{\odd},
\end{align*}
where we used Proposition \ref{prop:E1term} for the third equality sign. This equals the result of the first chain of equations above, because the ranks of the even and odd parts of $H^*_T(M)$ and $E_\infty$ agree, as we show in Corollary \ref{Appendix:Cor:RankGleich}.
\end{proof}

\begin{ex}
For any torus action with finitely many fixed points, their number is exactly $\chi(M)$. For example, consider orientable closed surfaces: any nontrivial circle action on the two-sphere has two fixed points, and any nontrivial circle action on the two-dimensional torus has no fixed points at all. Surfaces of higher genus do not admit any nontrivial circle actions.
\end{ex}

\begin{ex} \label{ex:cpnaction} By Example \ref{ex:oddcohomzeroeqformal}, any torus action on a manifold $M$ with $H^{\odd}(M)=0$ is equivariantly formal. For example, this is the case for $\CC P^n$.

As a concrete example, consider the $T^2$-action on $\CC P^2$ given by
\[
(t_0,t_1)\cdot [z_0:z_1:z_2]:=[t_0z_0:t_1z_1:z_2].
\]
Because $\dim H^*(\CC P^2)=3$, we know that if this action has finitely many fixed points, then their number has to be equal to $3$. Indeed, we see that the fixed points are given by
$[1:0:0]$, $[0:1:0]$ and $[0:0:1]$.
\end{ex}

\begin{prop}\label{prop:aeqformalfixedpointset}
For any action of a torus $T$ on a compact manifold $M$, we have $\dim H^*(M^T)\leq \dim H^*(M)$. Moreover, the action is equivariantly formal if and only if $\dim H^*(M^T)=\dim H^*(M)$.
\end{prop}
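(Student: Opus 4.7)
The plan is to use the spectral sequence of the Cartan model together with Borel localization to recast both sides as ranks over $R := S(\mft^*)$. By Corollary~\ref{cor:localizedborel}, $\rank_R H^*_T(M) = \dim H^*(M^T)$, while Proposition~\ref{prop:E1term} identifies $E_1 \cong S(\mft^*)\otimes H^*(M)$ and so $\rank_R E_1 = \dim H^*(M)$. Thus the theorem reduces to showing $\rank_R H^*_T(M) \leq \rank_R E_1$, with equality precisely when the Cartan spectral sequence collapses at $E_1$.

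For the inequality I would first identify $\rank_R H^*_T(M) = \rank_R E_\infty$ using the compatibility of rank with passing to the associated graded module (the appendix input Corollary~\ref{Appendix:Cor:RankGleich}, already used at the end of the proof of Theorem~\ref{thm:eulercharfixedpoints}). Then, since each differential $d_r \colon E_r \to E_r$ is $R$-linear and the pages are finitely generated $R$-modules, the short exact sequences $0\to \ker d_r \to E_r \to \im d_r \to 0$ and $0 \to \im d_r \to \ker d_r \to E_{r+1} \to 0$ combine to give $\rank E_{r+1} = \rank E_r - 2\rank(\im d_r)$. Iterating over the finitely many nontrivial pages yields $\rank E_\infty \leq \rank E_1$, proving the inequality.

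For the equivalence, one direction is immediate from Theorem~\ref{thm:bigthmequivformal}(3): if the action is equivariantly formal, then $H^*_T(M) \cong S(\mft^*)\otimes H^*(M)$ as $R$-modules, so $\rank_R H^*_T(M) = \dim H^*(M) = \dim H^*(M^T)$. For the converse, equality of dimensions forces $\rank E_\infty = \rank E_1$; by the rank formula above this means $\rank(\im d_r) = 0$ for every $r\geq 1$, i.e., each $\im d_r$ is an $R$-torsion module.

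The main obstacle is then upgrading ``$\im d_r$ is torsion'' to ``$d_r = 0$''. This is handled by induction on $r$: the initial page $E_1 = S(\mft^*)\otimes H^*(M)$ is finitely generated and free over $R$, in particular torsion-free, so its torsion submodule $\im d_1 \subseteq E_1$ must vanish; hence $d_1 = 0$ and $E_2 = E_1$ remains free. Repeating the argument at each subsequent page yields $d_r = 0$ for all $r\geq 1$, which is equivariant formality.
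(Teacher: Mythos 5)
Your argument is correct and follows essentially the same route as the paper: Borel localization identifies $\dim H^*(M^T)$ with $\rank_{S(\mft^*)} H^*_T(M)$, Corollary \ref{Appendix:Cor:RankGleich} transfers this to $E_\infty$, and the comparison with $\rank E_1 = \dim H^*(M)$ is decided by whether the differentials vanish. The only cosmetic difference is that you run the converse directly (torsion image inside a free page forces $d_r=0$, inductively) where the paper argues by contrapositive via the first nontrivial differential; these are the same observation.
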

\begin{proof} By the Borel Localization theorem we have 
\[
\rk H^*_T(M) = \rk H^*_T(M^T) = \rk H^*(M^T)\otimes S(\mft^*) = \dim H^*(M^T). 
\]
On the other hand we know that $\rk H^*_T(M) \leq \dim H^*(M)$: the spectral sequence of the Cartan model has $E_1 = S(\mft^*)\otimes H^*(M)$, which has rank $H^*(M)$. As submodules and quotients of a module cannot have greater rank than the original, we deduce that $\rk(E_\infty)\leq\dim H^*(M)$. Now the first claim follows by Corollary \ref{Appendix:Cor:RankGleich}.

If the action is equivariantly formal, then $H^*_T(M)$ is, as an $S(\mft^*)$-module, isomorphic to $H^*(M)\otimes S(\mft^*)$, hence its rank is equal to $\dim H^*(M)$. If the action is not equivariantly formal, then there exists a nontrivial differential; let $d_r$ be the first of these. As $E_r\cong E_1$ is a free $S(\mft^*)$-module, it follows that $E_{r+1}$ has rank strictly smaller than $E_r$. As by Corollary \ref{Appendix:Cor:RankGleich} the ranks of $H^*_T(M)$ and $E_\infty$ are equal, it follows that $\dim H^*(M^T) = \rk H^*_T(M) = \rk E_\infty < \rk E_1 = \dim H^*(M)$.
\end{proof}

\begin{ex}\label{ex:conjugationeqformal} Consider the action of a compact, connected Lie group $G$ on itself by conjugation. The action, restricted to a maximal torus $T\subset G$ (of dimension $r=\rk G$), has $T$ as fixed point set. Therefore we have $2^r=\dim H^*(G^T)$ as the total dimension of the cohomology of the fixed point set. But on the other hand it is known that also $\dim H^*(G) = 2^r$: A classical theorem of Hopf, see e.g.\ \cite[Theorem 1.3.4]{FelixOpreaTanre}, states that the de Rham cohomology of $G$ is an exterior algebra on generators of odd degree. The fact that the number of generators equals the rank of $G$ can be proven by various means; see \cite[Theorem 3.33]{FelixOpreaTanre} for an argument using rational homotopy theory, or \cite{Fok} for a more elementary argument using the degree of the squaring map $G\to G;\, g\mapsto g^2$. It follows that the $T$-action on $G$ by conjugation is equivariantly formal.
\end{ex}

\begin{ex} \label{ex:inclusionfixedsetconjugation}
Consider, as a special case of Example \ref{ex:conjugationeqformal}, the case $G=\SU(2)$, with maximal torus $S^1\subset \SU(2)$. As the action by conjugation is equivariantly formal, the inclusion $S^1\to \SU(2)$ induces an injection
\[
H^*_{S^1}(\SU(2))\longrightarrow H^*_{S^1}(S^1) = \RR[u]\otimes H^*(S^1).
\]
By equivariant formality we know that, as an $\RR[u]$-module, $H^*_{S^1}(\SU(2))$ is generated by two elements in degree $0$ and $3$. As $H^n_{S^1}(S^1)$ is only one-dimensional for $n=0,3$ (in fact for all $n$), this implies that the restriction map induces an isomorphism of $\RR[u]$-algebras
\[
H^*_{S^1}(\SU(2))\cong \RR[u]\oplus \alpha\cdot u\RR[u],
\]
where $\alpha$ is a generator of $H^1(S^1)$. 
\end{ex}

\begin{cor}\label{cor:actiononfixedseteqformal}
Consider an equivariantly formal action of a torus $T$ on a compact manifold $M$, and $H\subset T$ a subtorus. Then the $T$-action on (every component of) $M^H$ is again equivariantly formal.
\end{cor}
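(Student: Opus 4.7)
The plan is to reduce equivariant formality of the $T$-action on $M^H$ to the dimension-count criterion from Proposition \ref{prop:aeqformalfixedpointset}. First I would record the basic setup: since $T$ is abelian, it preserves the fixed-point submanifold $M^H$, and since $T$ is connected it preserves each connected component. Moreover $M^H$ is a compact submanifold of $M$ (as $H$ is compact and $M$ is compact), and crucially $(M^H)^T = M^T$ because $H \subset T$.

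The core of the proof is then a squeeze argument. I would assemble the chain
\[
\dim H^*(M) \;=\; \dim H^*(M^T) \;\leq\; \dim H^*(M^H) \;\leq\; \dim H^*(M),
\]
where the first equality is equivariant formality of the $T$-action on $M$ (via Proposition \ref{prop:aeqformalfixedpointset}), the middle inequality is Proposition \ref{prop:aeqformalfixedpointset} applied to the $T$-action on the compact $T$-manifold $M^H$ (whose $T$-fixed set equals $M^T$), and the last inequality is Proposition \ref{prop:aeqformalfixedpointset} applied to the $H$-action on $M$. All three quantities must therefore coincide; in particular $\dim H^*((M^H)^T) = \dim H^*(M^H)$, which by Proposition \ref{prop:aeqformalfixedpointset} is precisely equivariant formality of the $T$-action on $M^H$.

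To obtain the componentwise statement, I would decompose $M^H = \bigsqcup_i N_i$ into connected components. Each $N_i$ is $T$-stable by connectedness of $T$, and $(M^H)^T = \bigsqcup_i N_i^T$, so cohomology splits as a direct sum. The global equality $\sum_i \dim H^*(N_i^T) = \sum_i \dim H^*(N_i)$ combined with the individual inequalities $\dim H^*(N_i^T) \leq \dim H^*(N_i)$ from Proposition \ref{prop:aeqformalfixedpointset} forces equality term by term, and one more application of Proposition \ref{prop:aeqformalfixedpointset} yields equivariant formality of the $T$-action on each $N_i$.

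There is no real obstacle here beyond keeping the directions of the inequalities straight; the proof is essentially a sandwich bound, with equivariant formality of the ambient $T$-action used to pin down the upper end and the Borel-type bound for the $H$-action used to pin down the lower end. The only points worth noting are that $T$ genuinely acts on $M^H$ (using that $T$ is abelian) and that connected components are preserved (using connectedness of $T$), both automatic for tori.
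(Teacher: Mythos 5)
Your argument is correct and rests on the same two pillars as the paper's proof: the dimension criterion of Proposition \ref{prop:aeqformalfixedpointset} and the identity $(M^H)^T=M^T$. The only difference is organizational: the paper first invokes Corollary \ref{cor:subgroupeqformal} to get equivariant formality of the $H$-action on $M$ and hence the \emph{equality} $\dim H^*(M^H)=\dim H^*(M)$, whereas you use only the unconditional inequality $\dim H^*(M^H)\leq \dim H^*(M)$ and recover the equality from the sandwich $\dim H^*(M)=\dim H^*(M^T)\leq \dim H^*(M^H)\leq \dim H^*(M)$; this makes your version marginally more self-contained, at no cost. Your componentwise refinement matches the paper's closing remark, and as a bonus it shows directly that every component of $M^H$ must contain a $T$-fixed point.
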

\begin{proof}
By \ref{cor:subgroupeqformal} the subtorus $H$ acts equivariantly formally on $M$. Thus, by Proposition \ref{prop:aeqformalfixedpointset}, $\dim H^*(M^H)=\dim H^*(M)$. Now, the fixed point set of the $T$-action on $M^H$ is again $M^T\subset M^H$, and by equivariant formality of the $T$-action on $M$, we have
\[
\dim H^*(M^T) = \dim H^*(M) = \dim H^*(M^H).
\]
Applying Proposition \ref{prop:aeqformalfixedpointset} again, we conclude that the $T$-action on $M^H$ is equivariantly formal.

Finally, a torus action on a disconnected manifold is equivariantly formal if and only if the action on every connected component is equivariantly formal.
\end{proof}

\begin{ex}
Corollary \ref{cor:actiononfixedseteqformal} in particular says that for an equivariantly formal torus action, every component of a fixed point submanifold $M^H$, where $H\subset T$ is a subtorus, contains a fixed point of the action. Let us give an example of a torus action with fixed points where this property is not satisfied, taken from \cite[Example 2]{Allday}.

Consider $S^1$, embedded in $S^3 = \SU(2)$ as a maximal torus, as well as $S^2 = S^1 \times [0,1]/_\sim$, where we collapse the boundary circles to points. Elements of $S^2$ will thus be written as $[z,t]$, with $z\in S^1$, and $t\in [0,1]$; for $t=0,1$ the elements $[z,t]$ are identical for all $z$. As $S^3$ is simply-connected, we find a homotopy $h\colon S^1\times I\to S^3$ such that $h(z,0)=1$ (the identity element in $S^3$) and $h(z,1)=z$, for all $z\in S^1\subset S^3$.

Define an action of $T^2=S^1\times S^1$ on $M:=S^2\times S^3$ by
\[
(w_1,w_2)\cdot ([z,t],g) := ([zw_1^{-1},t],h(zw_1^{-1},t)w_2h(z,t)^{-1}gw_2^{-1}).
\] 
One directly verifies that this really defines an action.
On the copy of $S^3$ where $t=0$ we have
\[
(w_1,w_2) \cdot ([z,0],g) = ([z,0],w_2gw_2^{-1}),
\]
so the action is conjugation by $w_2$. On the copy of $S^3$ where $t=1$ we have
\[
(w_1,w_2) \cdot ([z,1],g) = ([z,1],zw_1^{-1}w_2z^{-1}gw_2^{-1}) = ([z,1],w_1^{-1}w_2gw_2^{-1}),
\]
so the action is conjugation by $w_2$, followed by left multiplication with $w_1^{-1}$. We picture the whole action as an interpolation between these two actions.

The fixed point set of the full $T$-action is $M^T\cong S^1$, where $S^1$ is the maximal torus in $S^3$ embedded at $t=0$. The restricted action of the subcircle $H = \{(w^2,w)\} \subset T^2$ is given by
\[
(w^2,w)\cdot ([z,t],g) = ([zw^{-2},t],h(zw^{-2},t)wh(z,t)^{-1}gw^{-1}).
\]
For $t\neq 0,1$ there cannot occur any $H$-fixed points, as $zw^{-2}$ cannot equal $z$ for all $w$. For $t=0$ again only the maximal torus is contained in $M^H$. For $t=1$ we have 
\[
(w^2,w)\cdot ([z,1],g) = ([z,1],w^{-1}gw^{-1}),
\]
and because $w^{-1}gw^{-1}=g$ is equivalent to $gwg^{-1}=w^{-1}$ we can only have $w^{-1}gw^{-1}=g$ for all $w\in S^1$ if $g$ is in the normalizer $N_{\SU(2)}(S^1)$. This normalizer is the union $S^1\cup A\cdot S^1$, where $A = \left(\begin{matrix}0 & 1 \\ -1 & 0\end{matrix}\right)$. For elements in the centralizer this equality is not satisfied, but it is satisfied for all elements in $A\cdot S^1$, so we have found another circle in the fixed point set. In total, $M^H$ has two connected components, each of which is diffeomorphic to a circle, and only one of them contains $T$-fixed points.

Concerning equivariant formality, this implies that the $H$-action on $M$ is equivariantly formal (as the total dimension of the cohomology $H^*(M^H)$ is $4$, which is the same as the dimension of $H^*(M)$), but the whole $T$-action is not.
\end{ex}

\section{Cohomology of homogeneous spaces} \label{sec:cohomhomspaces}

In this section we will apply equivariant cohomology theory to obtain information on the cohomology of homogeneous spaces $G/H$, mostly for the case that the ranks of $G$ and $H$ are equal.

\begin{prop}\label{prop:eqcohomtransitive}
Given any two compact, connected Lie groups $H\subset G$, the equivariant cohomology of the $G$-action on $G/H$ by left multiplication is given by
\[
H^*_G(G/H) \cong S(\mfh^*)^H;
\]
its algebra structure $S(\mfg^*)^G\to H^*_G(G/H) = S(\mfh^*)^H$ is given by restriction of polynomials.
\end{prop}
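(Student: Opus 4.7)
The plan is to exploit the principal bundle $H \hookrightarrow G \to G/H$ by viewing $G$ as a $(G \times H)$-manifold --- with $G$ acting on the left and $H$ acting on the right --- and applying the commuting actions principle of Remark \ref{rem:commutingactionprinciple} twice. Both the left $G$-action and the right $H$-action on $G$ are free, and their quotients are $\mathrm{pt}$ and $G/H$ respectively; in each case the residual action of the other factor is the structure we care about (the trivial $H$-action on the point, and the given $G$-action on $G/H$).

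First, I would apply the principle to $G$ with the free right $H$-action, obtaining a natural isomorphism
\[
H^*_G(G/H) \;\cong\; H^*_{G \times H}(G).
\]
Second, I would apply it again to the same space, now with the free left $G$-action. This yields
\[
H^*_H(\mathrm{pt}) \;\cong\; H^*_{G \times H}(G),
\]
and the left-hand side equals $S(\mfh^*)^H$ by Example \ref{ex:trivialaction}. Chaining the two yields a graded algebra isomorphism $H^*_G(G/H) \cong S(\mfh^*)^H$.

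For the $S(\mfg^*)^G$-algebra structure I would use Lemma \ref{lem:algebrastructurefrommaptopoint}, which identifies the structure map $S(\mfg^*)^G \to H^*_G(G/H)$ with the pullback along $G/H \to \mathrm{pt}$. Tracing this pullback through the two isomorphisms above reduces the claim to identifying the induced map $H^*_G(\mathrm{pt}) \to H^*_H(\mathrm{pt})$, that is, $S(\mfg^*)^G \to S(\mfh^*)^H$. By naturality of the Cartan model under the inclusion $H \hookrightarrow G$, this map is precisely the restriction of polynomials from $\mfg$ to $\mfh$.

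The main obstacle is the identification of the algebra structure as restriction: the underlying module isomorphism is nearly formal once the commuting actions principle is at hand, but pinning down the $S(\mfg^*)^G$-action on $S(\mfh^*)^H$ requires chasing the natural maps carefully through both applications. A caveat is that Remark \ref{rem:commutingactionprinciple} includes a full proof of the commuting actions principle only in the torus case, so a completely self-contained treatment would either prove the general case separately, or work directly in the Cartan model, using $\Omega(G/H)^G \cong \Lambda^*(\mfg/\mfh)^{*H}$ together with a Koszul-type argument to produce a quasi-isomorphism $C_G(G/H) \simeq S(\mfh^*)^H$.
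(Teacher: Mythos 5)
Your first step coincides with the paper's: two applications of the commuting-actions principle give $H^*_G(G/H)\cong H^*_{G\times H}(G)\cong H^*_H(\mathrm{pt})=S(\mfh^*)^H$ as graded algebras, and your caveat about the principle being proved in the text only for tori applies equally to the published argument, which cites the literature for the general case. The genuine gap is in your identification of the $S(\mfg^*)^G$-algebra structure. The two middle isomorphisms are induced by the space-level projections $G\to G/H$ and $G\to\mathrm{pt}$, so at the chain level the class $f\otimes 1\in C_G(G/H)$ is sent to the cocycle $(X,Y)\mapsto f(X)$ in $C_{G\times H}(G)$, whereas the image of $S(\mfh^*)^H$ consists of cocycles of the form $(X,Y)\mapsto h(Y)$. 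To finish, one must invert the quasi-isomorphism $C_H(\mathrm{pt})\to C_{G\times H}(G)$ and show that $f(X)$ is cohomologous to $f|_{\mfh}(Y)$. There is no map $H^*_G(\mathrm{pt})\to H^*_H(\mathrm{pt})$ ``induced by naturality'' sitting inside this chain of isomorphisms: the inclusion $H\hookrightarrow G$ never appears here as part of an equivariant map of spaces, so asserting that the composite reduces to the restriction map is essentially a restatement of the claim to be proved.

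The paper closes exactly this gap with a computation: the chain-homotopy inverse of $C_H(\mathrm{pt})\to C_{G\times H}(G)$ is the Cartan map, which substitutes the $H$-equivariant curvature of a connection on the principal $G$-bundle $G\to\mathrm{pt}$ for the $\mfg$-variable. Taking $\theta$ to be (essentially) the Maurer--Cartan form, one computes $\theta_g(\overline{Y}_g)=-\Ad_gY$ for $Y\in\mfh$ acting from the right, hence $F_H^\theta(Y)(g)=\Ad_gY$, and substituting into a $G$-invariant polynomial $f$ yields $f(\Ad_gY)=f(Y)$, i.e.\ the restriction $f|_{\mfh}$. Some argument of this kind (or an independent verification that restricting the acting group to $H$ and then evaluating at the $H$-fixed point $eH$ realizes the isomorphism) is required; your proposal as written omits it.
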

\begin{proof}
Applying Theorem \ref{thm:eqcohomlocallyfreeactions}, or rather the generalization described in Remark \ref{rem:commutingactionprinciple}, twice gives isomorphisms
\[
H^*_G(G/H) \cong H^*_{G\times H}(G) \cong H^*_H({\mathrm{pt}}) = S(\mfh^*)^H
\]
of graded $\RR$-algebras. One needs to confirm that the $S(\mfg^*)^G$-algebra structure is as claimed. To this end, we consider these isomorphisms on the level of equivariant differential forms:
\[
(S(\mfg^*)\otimes \Omega(G/H))^G\longrightarrow (S(\mfg^*)\otimes S(\mfh^*)\otimes \Omega(G))^{G\times H} \longleftarrow S(\mfh^*)^H
\]
where both maps are induced by the natural projection maps. On order to understand where a $G$-invariant polynomial on $\mfg$ is mapped to on the level of cohomology, one needs a chain homotopy inverse of the map on the right, the so-called Cartan map, which is described explicitly in \cite[Theorem 5.2.1]{GuilleminSternberg} or \cite[Section 7]{Meinrenken}. One needs to fix the (in this case unique) connection one-form $\theta$ of the principal $G$-bundle $G\to {\mathrm{pt}}$, which is essentially given by the Maurer-Cartan form of $G$ (but note that $G$ acts by left multiplication on $G$ here). Then, for $Y\in \mfh$ acting on $G$ from the right, we compute
\[
\theta_g(\overline{Y}_g) = \theta_g(dl_g(\overline{Y}_e)) = \theta_g(dr_g(\overline{\Ad_gY}_e)) = -\Ad_gY,
\]
where $l_g$ and $r_g$ denote left and right multiplication with $g\in G$, respectively. Thus, the $H$-equivariant curvature $2$-form $F_H^\theta = d_H\theta + \frac12[\theta,\theta]\in C^2_H(G)\otimes \mfg$ is given by
\[
F_H^\theta(Y)(g) = \Ad_gY,
\]
for every $Y\in \mfh$ and $g\in G$, because $\theta$ satisfies $d\theta + \frac12 [\theta,\theta] = 0$. Thus, for any $G$-invariant polynomial $f\in S(\mfg^*)^G$, replacing the $\mfg$-variable by $F_H^\theta$ is the same as restricting the polynomial to $\mfh$.
\end{proof}
\begin{rem} In \cite[Th\'eor\`eme 24]{DufloKumarVergne}, the proposition is proved under relaxed conditions. Also, just as it is the case with Theorem \ref{thm:eqcohomlocallyfreeactions}, the proof is much easier in the Borel model. We have 
\[
EG\times_G G/H = EG/H = BH,
\]
inducing an isomorphism $H^*_G(G/H) = S(\mfh^*)^H$. When identifying $EG\times_G G/H = BH$, the projection map $EG\times_G G/H\to BG$ becomes the natural map $BH = EG/H \to EG/G = BG$, thus showing the claim about the algebra structure.
\end{rem}

\begin{thm}\label{thm:homogeneousspacesequalrank} For a homogeneous space $G/H$, where $G$ is a compact, connected Lie group and $H\subset G$ a connected closed subgroup, the $G$-action on $G/H$ is equivariantly formal if and only if $\rk G = \rk H$. In this case we have an $\RR$-algebra isomorphism
\[
H^*(G/H) \cong \frac{S(\mfh^*)^H}{(S^+(\mfg^*)^G)}
\]
and $H^*(G/H)$ vanishes in odd degrees.
\end{thm}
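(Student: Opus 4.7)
The plan is to identify $H^*_G(G/H)\cong S(\mfh^*)^H$ as an $S(\mfg^*)^G$-algebra via Proposition \ref{prop:eqcohomtransitive}, with the restriction of polynomials from $\mfg$ to $\mfh$ as structure map, and then apply Theorem \ref{thm:bigthmequivformal}. Equivariant formality is thus equivalent to $S(\mfh^*)^H$ being a finitely generated free graded $S(\mfg^*)^G$-module realizing the isomorphism $H^*_G(G/H)\cong S(\mfg^*)^G\otimes H^*(G/H)$ of condition~(3).

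For the implication \emph{equivariantly formal $\Rightarrow$ $\rk G=\rk H$}, I would argue that freeness of the nonzero module $S(\mfh^*)^H$ over $S(\mfg^*)^G$ forces the restriction map $\rho\colon S(\mfg^*)^G\to S(\mfh^*)^H$ to be injective, since any element in its kernel annihilates the module yet acts nontrivially on $1\in S(\mfh^*)^H$ unless it is zero. Finite generation (inherited from finite-dimensionality of $H^*(G/H)$) then makes $S(\mfh^*)^H$ integral over $\rho(S(\mfg^*)^G)$, so Krull dimensions agree; by Chevalley's restriction theorem the two rings are polynomial in $\rk G$ respectively $\rk H$ generators, giving $\rk G=\rk H$.

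For the converse, pick a common maximal torus $T\subset H$. Chevalley's theorem identifies $S(\mfg^*)^G\cong R^{W(G)}$ and $S(\mfh^*)^H\cong R^{W(H)}$, with $R=S(\mft^*)$ and $W(H)\subset W(G)$. By Chevalley--Shephard--Todd, $R$ is a free $R^{W(G)}$-module, and the averaging operator $\frac{1}{|W(H)|}\sum_{w\in W(H)}w$ is an $R^{W(G)}$-linear idempotent with image $R^{W(H)}$; thus $R^{W(H)}$ is a direct summand of a free $R^{W(G)}$-module, hence a finitely generated graded projective module, and thus free by graded Nakayama over the graded polynomial ring $R^{W(G)}$. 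The explicit Poincar\'e series $P_{R^W}(t)=\prod_i(1-t^{2d_i^W})^{-1}$ in terms of the degrees of basic invariants, combined with the classical Hirsch--Chevalley formula $P_{H^*(G/H)}(t)=\prod_i(1-t^{2d_i^G})/(1-t^{2d_i^H})$ for equal-rank homogeneous spaces, allows one to identify the free $S(\mfg^*)^G$-module $S(\mfh^*)^H$ graded-isomorphically with $S(\mfg^*)^G\otimes H^*(G/H)$, verifying condition~(3) of Theorem \ref{thm:bigthmequivformal}.

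The formula $H^*(G/H)\cong S(\mfh^*)^H/(S^+(\mfg^*)^G)$ then follows from Theorem \ref{thm:bigthmequivformal}(5), and the vanishing of $H^{\odd}(G/H)$ is immediate from the graded module isomorphism, since both $S(\mfh^*)^H$ and $S(\mfg^*)^G$ are concentrated in even equivariant degrees. The main obstacle I expect is the rank-matching step: raw freeness of $R^{W(H)}$ over $R^{W(G)}$ is clean, but identifying the free part as $H^*(G/H)$ requires either the classical Poincar\'e polynomial formula (which amounts to Borel's theorem) or an independent argument that $\dim H^*(G/H)=|W(G)|/|W(H)|$, for instance via Theorem \ref{thm:eulercharfixedpoints} applied to the $T$-action on $G/H$ (whose fixed-point set is $N_G(T)/N_H(T)=W(G)/W(H)$) combined with the vanishing of odd cohomology of equal-rank homogeneous spaces.
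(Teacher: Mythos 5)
Your forward implication is correct but proceeds differently from the paper. You deduce $\rk G=\rk H$ algebraically: freeness of $H^*_G(G/H)\cong S(\mfh^*)^H$ over $S(\mfg^*)^G$ forces the structure map (restriction of polynomials, by Proposition \ref{prop:eqcohomtransitive}) to be injective, and finite generation makes the extension integral, so the Krull dimensions $\rk G$ and $\rk H$ of the two invariant rings must agree. The paper instead restricts to a maximal torus $T\subset G$ (Corollary \ref{cor:subgroupeqformal}) and observes that the $T$-action on $G/H$ can only have fixed points in the equal-rank case. Both arguments are sound; yours trades fixed-point considerations for commutative algebra.

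The converse is where the genuine gap lies, and you have put your finger on it yourself. The averaging-idempotent argument does show that $R^{W(H)}$ is a finitely generated graded free module over $R^{W(G)}$ --- this is exactly the ``entirely algebraic'' route alluded to in the remark following the theorem. But condition (3) of Theorem \ref{thm:bigthmequivformal} is not bare freeness: it is a graded isomorphism with $S(\mfg^*)^G\otimes H^*(G/H)$, and the proof of $(3)\Rightarrow(1)$ is a degree-by-degree dimension count against the $E_1$-term of Proposition \ref{prop:E1term}. So you must match the degrees of a homogeneous basis of $R^{W(H)}$ over $R^{W(G)}$ with the Betti numbers of $G/H$, and both devices you propose for this are circular inside this paper: the Hirsch--Chevalley Poincar\'e polynomial formula is Proposition \ref{prop:homspaceequalrankbetti}, which is itself derived from equivariant formality, and the Euler-characteristic count via Theorem \ref{thm:eulercharfixedpoints} and Lemma \ref{lem:homspacefixedpoints} only yields $\dim H^*(G/H)=|W(G)|/|W(H)|$ once one knows $H^{\odd}(G/H)=0$, which is also part of the conclusion. (It is true that freeness of $H^*_G(M)$ over $S(\mfg^*)^G$ implies equivariant formality for compact $M$, but the standard proofs use the Eilenberg--Moore spectral sequence or a Hirsch--Brown model, neither of which is developed here.) The paper breaks the circle with a topological input: the Bruhat decomposition gives $H^{\odd}(G/T)=0$, hence equivariant formality of the $G$-action on $G/T$ by Corollary \ref{thm:hoddcollapse}, and then Leray--Hirsch applied to $H/T\to G/T\to G/H$ --- using surjectivity of $S(\mft^*)/(S^+(\mfg^*)^G)\to S(\mft^*)/(S^+(\mfh^*)^H)$ --- transfers the vanishing of odd cohomology to $G/H$. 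To complete your route you would need some comparably independent computation of $H^*(G/H)$ or of its Poincar\'e polynomial.
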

\begin{proof}
If the $G$-action is equivariantly formal, then also a maximal torus in $G$ acts in an equivariantly formal fashion, by Corollary \ref{cor:subgroupeqformal}. But the action of a maximal torus in $G$ on $G/H$ by left multiplication can only have fixed points if the ranks of $H$ and $G$ are equal.

Conversely, we consider first the case that $H=T$ is a maximal torus of $G$. In this case $G/T$ admits a CW structure with only even-dimensional cells, by the classical Bruhat decomposition -- see e.g.\ \cite[Section 7]{Mare} (for a nice overview) and references therein, e.g.\ \cite[Theorems 5.1.3 and 5.1.5]{Kumar}. Thus, the odd cohomology of $G/T$ vanishes. By Example \ref{ex:oddcohomzeroeqformal} the $G$-action on $G/T$ is equivariantly formal, and combining the description of the equivariant cohomology in Proposition \ref{prop:eqcohomtransitive} with Theorem \ref{thm:bigthmequivformal} we obtain 
\[
H^*(G/T) \cong \frac{S(\mft^*)}{(S^+(\mfg^*)^G)}.
\]
For a general equal-rank homogeneous space $G/H$ we claim that the fibration
\[
H/T \longrightarrow G/T \longrightarrow G/H
\]
satisfies that the map $H/T\to G/T$ induces a surjection in de Rham cohomology. Indeed, this map is the natural projection
\[
\frac{S(\mft^*)}{(S^+(\mfg^*)^G)}\longrightarrow \frac{S(\mft^*)}{(S^+(\mfh^*)^H)}
\]
which is clearly surjective. Thus, the Leray-Hirsch theorem implies that the cohomology of $G/H$ also vanishes in odd degrees. Thus, in the same way as for $G/T$, the $G$-action on $G/H$ is equivariantly formal, and the desired description of the cohomology of $G/H$ follows.
\end{proof}

\begin{rem} There are various other ways to obtain this theorem, without using the Bruhat decomposition. Given a homogeneous space $G/H$ of equal rank, all isotropy groups of the $G$-action on $H$ have the same rank as that of $G$. For such actions equivariant formality is automatic, see \cite[Proposition 3.7]{GR}. Then, Proposition \ref{prop:eqcohomtransitive} and Theorem \ref{thm:bigthmequivformal} imply the description of the cohomology ring. The vanishing of the odd cohomology then follows directly from the fact $S(\mfh^*)^H$ is concentrated in even degrees, or equally directly from Proposition \ref{prop:equivformalfinitefixedpointset}, because by Lemma \ref{lem:homspacefixedpoints} below, the equivariantly formal action of a maximal torus $T\subset G$ on $G/H$ has finite fixed point set.

Alternatively, one may also argue entirely algebraically and use that $S(\mft^*)$ is a free module over $S(\mfg^*)^G$ (see e.g.\ \cite[Section 18.3]{Kane}) to prove equivariant formality of the $G$-action. 
\end{rem}

\begin{rem}\label{rem:modulestructureG/H}
By Corollary \ref{cor:orbitspacedings} we have, for any connected closed subgroup $H\subset G$ of a compact, connected Lie group $G$ of equal rank, that $H^*_H(G) = H^*(G/H)$, where $H$ acts (freely) on $G$ by right multiplication. We claim that the $S(\mfh^*)^H$-algebra structure of this equivariant cohomology
\[
S(\mfh^*)^H \longrightarrow H^*_H(G) \cong H^*(G/H) \cong \frac{S(\mfh^*)^H}{(S^+(\mfg^*)^G)}
\]
is given by the canonical projection map. To see this, we consider the following commutative diagram, whose upper horizontal isomorphisms are those from the proof of Proposition \ref{prop:eqcohomtransitive}, and whose vertical maps are given by restriction of the acting group: 
\[\xymatrix{
S(\mfh^*)^H \ar[rrd] \ar[r]^{\cong} & H^*_H({\mathrm{pt}}) \ar[r]^{\cong} & H^*_{G\times H}(G) \ar[r]^{\cong} \ar[d] & H^*_G(G/H) \ar[d]\\
& & H^*_H(G) \ar[r]^\cong & H^*(G/H) \ar[r]^\cong & \frac{S(\mfh^*)^H}{(S^+(\mfg^*)^G)}
}\]
Note that the square in the middle commutes because the inverses of the two horizontal maps are induced by the canonical projection $G\to G/H$. The claim follows because traversing the diagram from the top left to the bottom right via the upper path results in the canonical projection map.
\end{rem}

\begin{cor}\label{cor:homspaceequivalences}
Consider a homogeneous space $G/H$, where $H\subset G$ are compact, connected Lie groups. Then $\chi(G/H)\geq 0$. Moreover, the following conditions are equivalent:
\begin{itemize}
\item $\rank G = \rank H$
\item $\chi(G/H) >0$
\item $H^{\odd}(G/H)=0$.
\end{itemize}
\end{cor}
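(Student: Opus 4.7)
The plan is to apply the identity $\chi(M)=\chi(M^T)$ from Theorem \ref{thm:eulercharfixedpoints} to the left-multiplication action of a maximal torus $T\subset G$ on $G/H$, and combine it with the equal-rank theorem (Theorem \ref{thm:homogeneousspacesequalrank}).

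First I would analyze the fixed point set: $gH\in (G/H)^T$ iff $g^{-1}Tg\subset H$. If $\rk G>\rk H$, no conjugate of $T$ can fit inside $H$, so $(G/H)^T=\emptyset$ and Theorem \ref{thm:eulercharfixedpoints} immediately yields $\chi(G/H)=0$. Assume now $\rk G=\rk H$, and take $T\subset H$ to be a maximal torus of $H$; by the rank hypothesis it is also a maximal torus of $G$. Then $eH\in (G/H)^T$, so the fixed set is nonempty.

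Next I would show that in the equal-rank case every $T$-fixed point is isolated, so $(G/H)^T$ is finite by compactness of $G/H$. Recall that the fixed point set of a compact group action is a submanifold whose tangent space at a fixed point is the fixed subspace of the isotropy representation. At $eH$ this tangent space is $(\mfg/\mfh)^T$, where $T$ acts via $\Ad$. Since $T$ is maximal in both $G$ and $H$, the standard root space decomposition gives $\mfg^T=\mft=\mfh^T$, so $(\mfg/\mfh)^T=0$ and $eH$ is isolated. At another fixed point $gH$ the same argument applies with $T$ replaced by $g^{-1}Tg\subset H$, which is again simultaneously a maximal torus of $G$ and of $H$. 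Thus $\chi(G/H)=\chi((G/H)^T)=|(G/H)^T|>0$.

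Combining the two cases gives $\chi(G/H)\geq 0$, with $\chi(G/H)>0$ iff $\rk G=\rk H$. For the equivalence with vanishing odd cohomology: if $\rk G=\rk H$ then $H^{\odd}(G/H)=0$ by Theorem \ref{thm:homogeneousspacesequalrank}; conversely, if $H^{\odd}(G/H)=0$ then $\chi(G/H)=\dim H^*(G/H)\geq \dim H^0(G/H)\geq 1$ (as $G/H$ is connected), so $\rk G=\rk H$ by the equivalence just established. The only genuinely nontrivial step is the tangent-space calculation $(\mfg/\mfh)^T=0$ that forces fixed points to be isolated; the rest is bookkeeping with the already available theorems.
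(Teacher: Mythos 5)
Your proof is correct, and it diverges from the paper's in two places worth noting. For the case $\rk G>\rk H$ the paper does not use the full maximal torus: it constructs a single circle $S^1\subset T_G$ that is not $G$-conjugate into $H$ (via a limiting argument with dense subcircles) and applies Theorem \ref{thm:eulercharfixedpoints} to that circle action, whose fixed set is empty. Your observation that the full maximal torus $T$ already has $(G/H)^T=\emptyset$ for dimension reasons is cleaner and bypasses that conjugacy argument entirely. For the equal-rank case the paper simply quotes Theorem \ref{thm:homogeneousspacesequalrank} ($H^{\odd}(G/H)=0$, hence $\chi>0$), whereas you compute $\chi(G/H)=|(G/H)^T|>0$ directly, proving along the way that the fixed points are isolated via $(\mfg/\mfh)^T=\mfg^T/\mfh^T=\mft/\mft=0$ (the identification $(\mfg/\mfh)^T=\mfg^T/\mfh^T$ being justified by complete reducibility of the $T$-representation). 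This tangent-space argument is sound and in fact supplies the discreteness of the fixed set that the paper only establishes afterwards, by a different route, in Lemma \ref{lem:homspacefixedpoints} (identifying $(G/H)^T$ with $W(G)/W(H)$). Both proofs still rely on Theorem \ref{thm:homogeneousspacesequalrank} for the implication $\rk G=\rk H\Rightarrow H^{\odd}(G/H)=0$, and both close the cycle of equivalences the same way, so the overall logical skeleton is shared; what your version buys is a more self-contained and elementary treatment of the Euler-characteristic dichotomy.
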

\begin{proof}
In Theorem \ref{thm:homogeneousspacesequalrank} we showed that for homogeneous spaces with $\rk G = \rk H$ the odd degree cohomology vanishes, and hence also the Euler characteristic is positive.

Let us show that whenever $\rk G > \rk H$ the Euler characteristic is zero. Then, as we always have cohomology in degree zero, the odd cohomology cannot vanish either. To see this, we construct a circle action on $G/H$ without fixed points, and apply Theorem \ref{thm:eulercharfixedpoints}: We choose a maximal torus $T_H\subset H$, as well as a maximal torus $T_G\subset G$ containing $T_H$. We can choose a circle $S^1\subset T_G$ which is not $G$-conjugate to a subgroup of $H$. (If this was not the case, then choose a sequence of subcircles $\{\exp(tX_n)\}$, with $X_n\to X\in \mfg$, such that $\{\exp(tX)\}$ is dense in $G$. If there existed $g_n$ such that $\Ad_{g_n}X_n\in \mfh$, then we could find a subsequence, converging to $g\in G$, and this element would satisfy $\Ad_gX\in \mfh$. But then, by continuity, $gGg^{-1}\subset H$, a contradiction.) Then, this circle cannot fix any point $gH\in G/H$, as the $G$-isotropy of this point is $gHg^{-1}$ -- if it fixed $gH$, then it would be conjugate to a subgroup of $H$. 

We thus have found a circle action without fixed points, which shows that the Euler characteristic is zero. 
\end{proof}

We now neglect the ring structure of the cohomology of equal-rank homogeneous spaces obtained in Theorem \ref{thm:homogeneousspacesequalrank}, and concentrate on their Betti numbers. We first obtain a formula for the total Betti number in Proposition \ref{prop:dimGH}, and then describe explicitly the Poincar\'e polynomials in Proposition \ref{prop:homspaceequalrankbetti}.

\begin{lemma}\label{lem:homspacefixedpoints}
Consider a homogeneous space $G/H$, where $H$ and $G$ are compact, connected Lie groups of equal rank, and $T\subset H$ a maximal torus. Then the inclusion $N_G(T)\to G$ induces an injection 
\[
W(G)/W(H) \cong N_G(T)/N_H(T) \longrightarrow G/H
\]
whose image is precisely the fixed point set of the $T$-action on $G/H$.
\end{lemma}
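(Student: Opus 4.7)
The plan is to analyze the map $\varphi \colon N_G(T) \to G/H$, $n \mapsto nH$, and prove three things: the image equals $(G/H)^T$, the fibers are the cosets of $N_H(T)$, and this quotient agrees with $W(G)/W(H)$. The key structural input is the equal-rank assumption, which ensures that $T$, being a maximal torus of $H$, is also a maximal torus of $G$; this is what lets conjugates of $T$ be moved back to $T$ inside $H$.

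For the image computation, I would first show the easy inclusion: if $n \in N_G(T)$, then for any $t \in T$ one has $t \cdot nH = n(n^{-1}tn)H$, and since $n^{-1}tn \in T \subset H$, this equals $nH$, so $nH \in (G/H)^T$. Conversely, suppose $gH$ is $T$-fixed, so $g^{-1}Tg \subset H$. Then $g^{-1}Tg$ is a torus of $H$ of rank equal to $\rank H$, hence a maximal torus of $H$. By conjugacy of maximal tori in $H$, there is $h \in H$ with $h^{-1}(g^{-1}Tg)h = T$, i.e., $gh \in N_G(T)$, so $gH = (gh)H$ lies in the image of $\varphi$.

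Next I would determine the fibers. Two elements $n_1, n_2 \in N_G(T)$ have the same image iff $n_1^{-1}n_2 \in H \cap N_G(T)$, so the key point is $H \cap N_G(T) = N_H(T)$. The inclusion $\supset$ is obvious; for $\subset$, any element of $H$ that normalizes $T$ normalizes $T$ while lying in $H$, hence lies in $N_H(T)$. This yields the injection $N_G(T)/N_H(T) \hookrightarrow G/H$ with image $(G/H)^T$.

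Finally, for the Weyl group identification, since $T \subset N_H(T) \subset N_G(T)$, one has the natural isomorphism
\[
N_G(T)/N_H(T) \;\cong\; \bigl(N_G(T)/T\bigr)\big/\bigl(N_H(T)/T\bigr) \;=\; W(G)/W(H),
\]
using that $T$ is a maximal torus of both $G$ and $H$. No step here looks like a genuine obstacle; the only place a subtlety could hide is the conjugacy argument in the image computation, and that is handled cleanly by noting that $\rank G = \rank H$ forces $g^{-1}Tg$ to be maximal in $H$, so standard conjugacy of maximal tori of the compact connected group $H$ applies.
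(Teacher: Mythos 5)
Your proposal is correct and follows essentially the same route as the paper: the core step in both is that $gH\in (G/H)^T$ iff $g^{-1}Tg\subset H$, and then the equal-rank hypothesis makes $g^{-1}Tg$ a maximal torus of $H$, so conjugacy of maximal tori in $H$ produces $h$ with $gh\in N_G(T)$. You additionally spell out the fiber computation $H\cap N_G(T)=N_H(T)$ and the identification with $W(G)/W(H)$, which the paper leaves implicit; these details are routine and correctly handled.
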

\begin{proof}
We observe that an element $gH\in G/H$ is fixed by $T$ if and only if $g^{-1}Tg\subset H$, i.e., by the conjugacy of maximal tori in $H$, if and only if there exists $h\in H$ such that $h^{-1}g^{-1}Tgh = T$. As $ghH = gH$, this means that the $T$-fixed point set is precisely the image of the composition $N_G(T) \to G \to G/H$ of the natural inclusion with the natural projection.
\end{proof}
\begin{prop}\label{prop:dimGH} For an equal-rank homogeneous space $G/H$, we have
\begin{equation}\label{eq:dimcohomGH}
\dim H^*(G/H) = \frac{|W(G)|}{|W(H)|}.
\end{equation}
\end{prop}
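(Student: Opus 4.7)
The plan is to combine the equivariant formality of the equal-rank case (Theorem \ref{thm:homogeneousspacesequalrank}) with the total Betti number interpretation of the fixed point set (Proposition \ref{prop:aeqformalfixedpointset}) and the explicit description of the $T$-fixed points (Lemma \ref{lem:homspacefixedpoints}).

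First, I would choose a maximal torus $T$ of $H$, which is simultaneously a maximal torus of $G$ since $\rk G = \rk H$. By Theorem \ref{thm:homogeneousspacesequalrank}, the $G$-action on $G/H$ by left multiplication is equivariantly formal, and Corollary \ref{cor:subgroupeqformal} then transfers equivariant formality to the restricted $T$-action on $G/H$.

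Next, I would apply Proposition \ref{prop:aeqformalfixedpointset} to this equivariantly formal torus action, which gives the identity
\[
\dim H^*(G/H) = \dim H^*\bigl((G/H)^T\bigr).
\]
By Lemma \ref{lem:homspacefixedpoints}, the fixed point set $(G/H)^T$ is in bijection with $W(G)/W(H)$, so in particular it is a finite set of $|W(G)|/|W(H)|$ points. Therefore its total cohomology is concentrated in degree zero and has dimension $|W(G)|/|W(H)|$, yielding the claimed formula.

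There is essentially no obstacle here — every ingredient has been established in earlier sections; the only subtlety is remembering to pick $T$ as a maximal torus of $H$ (rather than of $G$), so that $T \subset H$ and $T$ still has maximal rank in $G$, which is precisely what makes Lemma \ref{lem:homspacefixedpoints} applicable and keeps the fixed point set finite.
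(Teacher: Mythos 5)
Your proposal is correct and follows exactly the paper's argument: equivariant formality of the $T$-action (via Theorem \ref{thm:homogeneousspacesequalrank} and Corollary \ref{cor:subgroupeqformal}), the equality $\dim H^*(G/H)=\dim H^*((G/H)^T)$ from Proposition \ref{prop:aeqformalfixedpointset}, and the identification of $(G/H)^T$ with $W(G)/W(H)$ from Lemma \ref{lem:homspacefixedpoints}. No issues.
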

\begin{proof}
This follows from Proposition \ref{prop:aeqformalfixedpointset} because the action of a maximal torus $T\subset H$ is equivariantly formal and has precisely $\frac{|W(G)|}{|W(H)|}$ fixed points.
\end{proof}
\begin{rem}
The equality $\dim H^*(G/T)=|W(G)|$ follows also because the CW structure on $G/T$ given by the Bruhat decomposition has precisely $|W(G)|$ cells. Proposition \ref{prop:dimGH}  is then immediate from the observations on the fibration $H/T\to G/T\to G/H$ given in the proof of Theorem \ref{thm:homogeneousspacesequalrank}.
\end{rem}

\begin{ex}\label{ex:totalBettiGrass}
For the complex Grassmannian of $k$-planes in $\CC^n$

\[
{\mathrm{Gr}}_k(\CC^n) = \quotientmed{\U(n)}{\U(k)\times \U(n-k)}
\] 
we obtain
\[
\dim H^*({\mathrm{Gr}}_k(\CC^n)) = \frac{|W(U(n))|}{|W(U(k))| \cdot |W(U(n-k))|} = \frac{n!}{k!(n-k)!} = {n \choose k}.
\]
\end{ex}
\begin{prop}\label{prop:homspaceequalrankbetti}
Consider a homogeneous space $G/H$ of compact, connected Lie groups $H\subset G$ of equal rank $r$. If 
\[
S(\mfg^*)^G \cong \RR[\sigma_1,\ldots,\sigma_r]
\] 
and
\[
S(\mfh^*)^H \cong \RR[\psi_1,\ldots,\psi_r]
\]
with $\deg \sigma_i = p_i$ and $\deg \psi_i = q_i$ (usual degree of polynomials), then 
\[
P_t(H^*(G/H)) = \prod_{i=1}^r \frac{1-t^{2p_i}}{1-t^{2q_i}},
\]
where $P_t(H^*(G/H)) = \sum_{n=0}^{\dim G/H} b_n(G/H) t^n$ is the Poincar\'e polynomial of $G/H$.
\end{prop}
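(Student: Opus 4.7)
My approach would be to compare two different presentations of the equivariant cohomology ring $H^*_G(G/H)$ and read off the Poincaré series from the resulting identity of graded vector spaces.

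First, by Proposition \ref{prop:eqcohomtransitive}, we have a graded isomorphism
\[
H^*_G(G/H) \cong S(\mfh^*)^H
\]
with $S(\mfg^*)^G$-algebra structure given by restriction of polynomials from $\mfg$ to $\mfh$. Here the grading on the Cartan model assigns degree $2k$ to an element of $S^k$, so if we identify $S(\mfg^*)^G \cong \RR[\sigma_1,\ldots,\sigma_r]$ with the $\sigma_i$ having polynomial degrees $p_i$, their degree in the Cartan grading is $2p_i$; likewise $\deg \psi_i = 2q_i$. Consequently
\[
P_t\bigl(S(\mfg^*)^G\bigr) = \prod_{i=1}^r \frac{1}{1-t^{2p_i}}, \qquad P_t\bigl(S(\mfh^*)^H\bigr) = \prod_{i=1}^r \frac{1}{1-t^{2q_i}}.
\]

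Next, since $\rk G = \rk H$, Theorem \ref{thm:homogeneousspacesequalrank} tells us the $G$-action on $G/H$ is equivariantly formal. Invoking the equivalent formulation $(3)$ of Theorem \ref{thm:bigthmequivformal}, we obtain an isomorphism of graded $S(\mfg^*)^G$-modules
\[
H^*_G(G/H) \cong S(\mfg^*)^G \otimes H^*(G/H).
\]
Combining this with the identification from the first step yields an isomorphism of graded vector spaces
\[
S(\mfh^*)^H \cong S(\mfg^*)^G \otimes H^*(G/H).
\]
Since $H^*(G/H)$ is finite-dimensional (as $G/H$ is compact) and both polynomial rings have finite-dimensional graded pieces, taking Poincaré series on both sides gives the multiplicative identity
\[
\prod_{i=1}^r \frac{1}{1-t^{2q_i}} = \left(\prod_{i=1}^r \frac{1}{1-t^{2p_i}}\right) \cdot P_t(H^*(G/H)),
\]
from which solving for $P_t(H^*(G/H))$ yields the claimed formula.

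There is no real obstacle here once the equivariant-formality isomorphism of condition (3) is in hand; the only pitfall is the bookkeeping of grading conventions, since one must remember that the polynomial degree $p_i$ of $\sigma_i$ corresponds to cohomological degree $2p_i$. Note also that we only use equivariant formality as an isomorphism of graded vector spaces, not of algebras or even of modules beyond that; this is fine because multiplicativity of Poincaré series under graded tensor product is purely a dimension-counting statement.
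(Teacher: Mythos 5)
Your proposal is correct and follows essentially the same route as the paper's own proof: identify $H^*_G(G/H)\cong S(\mfh^*)^H$ via Proposition \ref{prop:eqcohomtransitive}, invoke equivariant formality from Theorem \ref{thm:homogeneousspacesequalrank} together with condition $(3)$ of Theorem \ref{thm:bigthmequivformal} to get $S(\mfh^*)^H\cong S(\mfg^*)^G\otimes H^*(G/H)$ as graded vector spaces, and compare Poincar\'e series. Your remark about the doubling of degrees in the Cartan grading matches the paper's parenthetical caveat exactly.
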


\begin{proof}
In Theorem \ref{thm:homogeneousspacesequalrank} we observed that the transitive $G$-action on $G/H$ is equivariantly formal. Using Proposition \ref{prop:eqcohomtransitive} and Theorem \ref{thm:bigthmequivformal} we conclude that
\begin{equation}\label{eq:freemoduleG/T}
S(\mfh^*)^H\cong H^*_G(G/H) \cong S(\mfg^*)^G \otimes H^*(G/H);
\end{equation}
here we need these isomorphisms only as one of graded vector spaces (but note that as elements of equivariant cohomology, the $\sigma_i$ and $\psi_i$ have twice the degree they inherited from the polynomial rings). This equality helps to compute the Betti numbers of $G/H$: the Poincar\'e series of $S(\mfh^*)^H$ and $S(\mfg^*)^H$ (for a graded vector space $V = \bigoplus_{n\geq 0} V_n$ with $\dim V_n<\infty$ for all $n$, this is the formal power series $\sum_{n=0}^\infty t^n \dim V_n$) are 
\[
P_t(S(\mfh^*)^H) = \prod_{i=1}^r \frac{1}{(1-t^{2q_i})},\qquad P_t(S(\mfg^*)^G) = \prod_{i=1}^r \frac{1}{(1-t^{2p_i})}.
\]
Then \eqref{eq:freemoduleG/T} implies that 
\[
P_t(S(\mfh^*)^H) = P_t(S(\mfg^*)^G)\cdot P_t(H^*(G/H)),
\]
so that 
\[
P_t(H^*(G/H)) = \prod_{i=1}^r \frac{1-t^{2p_i}}{1-t^{2q_i}}.
\]
\end{proof}

\begin{ex}\label{ex:bettiG/T}
In the special case that $H=T$ is a maximal torus of $G$, the cohomology $H^*(G/T)$ is, as an $\RR$-algebra, generated by the elements in $H^2(G/T)$. The Poincar\'e polynomial is 
\[
P_t(H^*(G/T)) = \prod_{i=1}^r \frac{1-t^{2p_i}}{1-t^2} = \prod_{i=1}^r (1+t^2+t^4 \cdots + t^{2p_i-2}).
\]
In particular, the total Betti number of $G/T$ is
\[
\dim H^*(G/T) = P_1(H^*(G/T)) = \prod_{i=1}^r p_i.
\]
Comparing this with Equation \eqref{eq:dimcohomGH}, i.e., $\dim H^*(G/T) = |W(G)|$, we obtain the following general formula for the order of the Weyl group of $G$ in terms of the generators of the cohomology of $G$:
\[
|W(G)| = \prod_{i=1}^r p_i.
\]
\end{ex}
\begin{ex}
Consider the complex Grassmannian ${\mathrm{Gr}}_k(\CC^n)$ of $k$-planes in $\CC^n$ as in Example \ref{ex:totalBettiGrass}.  In Example \ref{ex:invpolyUn} we computed that for $G=\U(n)$ we have $S(\mfg^*)^G = \RR[\sigma_1,\ldots,\sigma_n]$, where $\deg \sigma_i = i$. Thus, Proposition \ref{prop:homspaceequalrankbetti} gives
\begin{align*}
P_t({\mathrm{Gr}}_k(\CC^n)) &= \frac{(1-t^2)\cdots (1-t^{2n})}{(1-t^2)\cdots (1-t^{2k})(1-t^2)\cdots (1-t^{2(n-k)})}\\
&=\frac{(1-t^{2k+2})\cdots (1-t^{2n})}{(1-t^2)\cdots (1-t^{2(n-k)})}.
\end{align*}
\end{ex}
For more information on the cohomology of homogeneous spaces $G/H$, where $\rk G > \rk H$, we only refer to the literature, e.g.\ \cite{GHV}.

\section{Computing $H^*(M)$ via $H^*_T(M)$} \label{sec:HMHTM}

In Theorem \ref{thm:bigthmequivformal} we have seen that for an equivariantly formal $G$-action on $M$ we have an isomorphism of $\RR$-algebras
\[
H^*(M) \cong \frac{H^*_G(M)}{S^+(\mfg^*)^G\cdot H^*_G(M)}.
\]
This means that whenever we know the equivariant cohomology $H^*_G(M)$ as an $S(\mfg^*)^G$-algebra, we can use this isomorphism to compute the ordinary cohomology $H^*(M)$.

For an equivariantly formal torus action, the Borel localization theorem \ref{thm:borellocalization} states that the restriction map
\[
H^*_T(M) \longrightarrow H^*_T(M^T) = S(\mft^*)\otimes H^*(M^T)
\]
is injective, so one can try to compute $H^*_T(M)$ by understanding its image under this map. This is achieved by the Chang--Skjelbred Lemma, which describes the image only in terms of the $1$-skeleton $M_1 := \{p\in M\mid \dim T\cdot p\leq 1\}$ of the action, see \cite[Lemma 2.3]{ChangSkjelbred}. The original formulation used the Borel model; as $M_1$ is not a manifold, the formulation in terms of the Cartan model reads slightly differently -- see \cite[Section 11.5]{GuilleminSternberg} for the proof.
\begin{thm}[Chang-Skjelbred Lemma]\label{thm:changskjelbred}
The image of the natural restriction map $i^*\colon H^*_T(M) \to H^*_T(M^T)$ is given by
\begin{equation}\label{eq:changintersection}
\bigcap_{H\subset T} i_H^*(H^*_T(M^H)),
\end{equation}
where $H$ runs through all codimension-one subtori of $T$ and $i_H\colon M^T\to M^H$ is the inclusion.
\end{thm}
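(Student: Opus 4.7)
The inclusion of $\im(i^*)$ into the intersection is immediate from functoriality: for each codimension-one subtorus $H \subset T$, the inclusion $M^T \hookrightarrow M$ factors through $M^H$, so $i^* = i_H^* \circ j_H^*$ with $j_H^* \colon H^*_T(M) \to H^*_T(M^H)$; hence the image of $i^*$ lies in the image of $i_H^*$ for every such $H$. For the reverse inclusion, the plan is to invoke the long exact sequence of the pair $(M, M^T)$,
\[
\cdots \to H^k_T(M) \xrightarrow{i^*} H^k_T(M^T) \xrightarrow{\delta} H^{k+1}_T(M, M^T) \to \cdots,
\]
in which the image of $i^*$ is the kernel of $\delta$, and to show that if $\alpha \in H^*_T(M^T)$ lies in the intersection \eqref{eq:changintersection} then $\delta(\alpha) = 0$. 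The strategy is to prove this localization-wise on $S(\mft^*)$: to demonstrate that $\delta(\alpha)$ vanishes after localization at every associated prime of $H^{*+1}_T(M, M^T)$.

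Two ingredients are needed. The first is a support bound: the support of $H^*_T(M, M^T)$ is contained in the union of the hyperplane primes $\mfp_H \subset S(\mft^*)$ where $H$ ranges over codimension-one subtori with $M^H \supsetneq M^T$ and $\mfp_H$ consists of polynomials vanishing on $\mfh$. I would derive this by covering $M \setminus M^T$ by finitely many equivariant tubular neighborhoods of non-fixed orbits $Tp$, each of which has equivariant cohomology annihilated by linear forms vanishing on $\mft_p$ (as in the computation preceding Lemma \ref{lem:maptoorbitsupport}), and assembling via the equivariant Mayer-Vietoris sequence exactly as in the proof of Theorem \ref{thm:borellocalization}. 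The second ingredient is a compatibility: writing $\alpha = i_H^*(\beta_H)$ for a codimension-one $H$ with $M^H \supsetneq M^T$, naturality of the long exact sequences under the inclusion of pairs $(M, M^T) \hookrightarrow (M, M^H)$ yields $\delta(\alpha) = \pi_H^*(\delta_H(\beta_H))$, where $\delta_H$ and $\pi_H^*$ are the connecting map for $(M, M^H)$ and the map induced on relative cohomology, respectively. A parallel Mayer-Vietoris argument shows that $H^*_T(M, M^H)$ vanishes upon localization at $\mfp_H$ (using that every isotropy subalgebra in $M \setminus M^H$ fails to contain $\mfh$, and hence is annihilated by some element of $\mfp_{T_p} \setminus \mfp_H$); consequently $\delta(\alpha)$ vanishes at $\mfp_H$. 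Ranging over all such $H$ shows that $\delta(\alpha)$ vanishes at every $\mfp_H$ in the support of $H^{*+1}_T(M, M^T)$.

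The main obstacle is the final algebraic step: concluding $\delta(\alpha) = 0$ from its vanishing at every minimal prime of its support. This requires that $H^{*+1}_T(M, M^T)$ have no embedded associated primes. Since the theorem belongs to a section implicitly working under equivariant formality, $H^*_T(M)$ is free over $S(\mft^*)$ by Theorem \ref{thm:bigthmequivformal}, and $H^*_T(M^T) = S(\mft^*) \otimes H^*(M^T)$ is free as well. The cleanest route is to leverage these freeness properties, together with the long exact sequence, to show that $H^*_T(M, M^T)$ is Cohen-Macaulay of Krull dimension $\dim T - 1$, so that its associated primes coincide with the minimal primes of its support. Making this last step precise --- ideally by a direct induction on orbit-type strata rather than invoking full Cohen-Macaulay machinery --- is the key technical hurdle of the argument.
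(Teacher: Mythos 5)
First, a caveat: the paper does not prove this theorem at all --- it defers to \cite[Section 11.5]{GuilleminSternberg} --- so there is no internal proof to compare against. Your outline is, in substance, the standard localization proof: the easy inclusion by functoriality, the reduction of the hard inclusion to $\ker\delta$ via the pair $(M,M^T)$, the support bound by covering $M\setminus M^T$ with tubular neighborhoods of non-fixed orbits and assembling via Mayer--Vietoris, and the vanishing of $\delta(\alpha)$ after localizing at each $\mathfrak{p}_H$ are all correct as sketched. You are also right that equivariant formality, though absent from the statement, must be read in from the surrounding section; without it (more precisely, without $H^*_T(M)$ being at least a second syzygy, cf.\ \cite{AlldayFranzPuppe}) the lemma is false. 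A minor technical debt: the relative groups $H^*_T(M,A)$ and their long exact sequences exist in the paper's Cartan-model framework only after one defines them, e.g.\ by an algebraic mapping cone, and checks excision; this is routine but should be said.

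The genuine gap is the step you yourself flag and leave open: passing from ``$\delta(\alpha)$ dies in every localization at a height-one prime of the support'' to ``$\delta(\alpha)=0$'', which needs the absence of embedded primes. This is closable with tools already at hand, and more easily than your proposal suggests: by Corollary \ref{cor:eqformalinjection} the map $i^*$ is injective, so the long exact sequence identifies $H^{*+1}_T(M,M^T)$ with $\coker i^*$, and the short exact sequence $0\to H^*_T(M)\to H^*_T(M^T)\to \coker i^*\to 0$ is a length-one free resolution over $S(\mft^*)$, since both $H^*_T(M)$ (by Theorem \ref{thm:bigthmequivformal}) and $H^*_T(M^T)=S(\mft^*)\otimes H^*(M^T)$ are free. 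The Auslander--Buchsbaum formula then gives $\depth(\coker i^*)=\dim T-1$ when $\coker i^*\neq 0$, while your support bound caps its Krull dimension at $\dim T-1$; hence $\coker i^*$ is Cohen--Macaulay of dimension $\dim T-1$, all its associated primes are minimal of height one, and (since any prime in the support must contain one of the finitely many ideals $\mathfrak{p}_{T_p}$, each itself prime of height $\geq 1$) these associated primes are exactly among the $\mathfrak{p}_H$ with $M^H\neq M^T$. A nonzero element of a module remains nonzero in the localization at some associated prime, so your $\mathfrak{p}_H$-by-$\mathfrak{p}_H$ vanishing forces $\delta(\alpha)=0$. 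Note that you should not try to prove Cohen--Macaulayness of $H^*_T(M,M^T)$ by an independent stratification argument: under equivariant formality it simply \emph{is} $\coker i^*$, and that is the only module whose associated primes you need to control.
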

Note that for almost all codimension-one subtori $H\subset T$ we have $M^H = M^T$; these $H$ are irrelevant for the intersection. The only relevant groups $H$ are the connected components of those isotropy groups of the $T$-action that are of codimension one -- of these there are only finitely many. The one-skeleton $M_1$ of the action is the union of all the $M^H$, where $H$ runs through the codimension-one subtori as above.
\begin{ex}\label{ex:oneskeletoncp2}
Consider the $T^2$-action on $\CC P^2$ from Example \ref{ex:cpnaction}. The orbit space of this action is a triangle. The one-skeleton of the action is the preimage of the boundary of this triangle under the projection to the orbit space. It is the union of three $2$-spheres, any two of which meet in a single point.
\end{ex}

One important special case in which this theorem yields explicitly computable results is that of so-called GKM actions, named after a paper by Goresky, Kottwitz, and MacPherson \cite{GKM}. There, one assumes that the structure of the one-skeleton is as simple as possible: 
\begin{defn}\label{defn:GKM}
We call an action of a torus $T$ on a compact, connected manifold $M$ a \emph{GKM action} if the following conditions are satisfied:
\begin{enumerate}
\item The action is equivariantly formal.
\item The fixed point set of the action is finite.
\item The one-skeleton $M_1$ is a finite union of $T$-invariant two-spheres. 
\end{enumerate}
\end{defn}
Given the second condition, we know that the first one is equivalent to demanding that the odd cohomology groups of $M$ vanish, see Proposition \ref{prop:equivformalfinitefixedpointset}. Easy examples of GKM actions are the standard circle action on $S^2$, or the $T^2$-action on $\CC P^2$ (see Example \ref{ex:oneskeletoncp2}). These can be generalized to the following class of examples:
\begin{ex}
All toric symplectic manifolds are GKM. Indeed, toric symplectic manifolds have vanishing odd cohomology groups \cite[Theorem VII.3.5]{Audin} and finite fixed point set, and at each fixed point the weights of the isotropy representation form a basis of $\mft^*$: if $M$ is $2n$-dimensional, then there are precisely $n$ weights of the isotropy representation at any given fixed point, which have to be linearly independent, as otherwise the common kernel of the weights would determine a positive-dimensional subtorus acting trivially on $M$.
\end{ex}

Let $p\in M^T$ be a fixed point of a GKM action. Then the isotropy representation at $p$ decomposes into two-dimensional irreducible subrepresentations. If $\alpha$ is a weight of the isotropy representation -- which is a linear form on $\mft$, well-defined up to sign -- with weight space $V_\alpha$, and $T_\alpha\subset T$ the subtorus with Lie algebra $\ker \alpha$, then $V_\alpha$ is tangent to $M^{T_\alpha}\subset M_1$. The condition that $M_1$ is a finite union of two-dimensional submanifolds, is equivalent to the condition that the weights of the isotropy representation, at any fixed point, are pairwise linearly independent. Thus, for a GKM action on a manifold of dimension $2n$, in any given fixed point there meet precisely $n$ invariant two-spheres.

To any GKM action one associates, as follows, a labelled graph $\Gamma$, called the \emph{GKM graph} of the action: the vertices $V(\Gamma)$ are given by the fixed points of the action, and we draw an edge (i.e., an element of the edge set $E(\Gamma)$) for any invariant $2$-sphere connecting two fixed points. The argument above shows that this graph, for $M$ of dimension $2n$, is $n$-valent. Additionally, we label the edge as follows: the tangent space of an invariant two-sphere in one of the two fixed points is a two-dimensional invariant submodule of the isotropy representation, and there is a codimension-one subtorus $H\subset T$ that acts trivially on it. We put any nonzero linear form $\alpha\in \mft^*$ that vanishes on $\mfh$  as a label of the corresponding edge.
\begin{ex} A classical result of Atiyah \cite{Atiyah2} and Guillemin--Sternberg \cite{GuiSte} states that the image of the momentum map $\mu\colon M\to \mft^*$ of an Hamiltonian torus action on a symplectic manifold $M$ is a convex polytope. For a toric symplectic manifold $M$, the dimension of an orbit $T\cdot p$ is precisely the smallest dimension of a face containing $\mu(p)$. It follows that the GKM graph of a toric symplectic manifold is precisely the one-skeleton of the polytope $\mu(M)$.
\end{ex}

\begin{ex}
Consider a homogeneous space $G/H$, with $\rk G = \rk H$, equipped with the action of a maximal torus $T\subset H$ by left multiplication. We showed in Section \ref{sec:cohomhomspaces} that this action is equivariantly formal, and that the fixed point set of this action is given by the finite set $W(G)/W(H)$. In \cite{GuilleminHolmZara} it was observed that the $T$-action is GKM, and the GKM graph was determined explicitly in terms of the root systems of $G$ and $H$ (see \cite[Theorem 2.4]{GuilleminHolmZara}).
\end{ex}

The equivariant cohomology of a GKM action is encoded in the GKM graph:

\begin{thm} Consider a GKM action of a torus $T$ on a compact, connected orientable manifold $M$. Then 
\begin{align*}
&H^*_T(M) \cong \Big\{(f_p)\in \bigoplus_{p\in M^T} S(\mft^*) \Bigm|  f_p-f_q \in (\alpha) \text{ if there is an edge from } p \text{ to } q \text{ labelled } \alpha\Big\}.
\end{align*}
Here, $(\alpha)$ denotes the principal ideal generated by $\alpha$.
\end{thm}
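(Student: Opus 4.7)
The plan is to combine the Borel localization theorem, the Chang--Skjelbred lemma, and the explicit $S^1$-equivariant cohomology of $S^2$ from Example \ref{ex:S1aufS2}. First, since the action is equivariantly formal and $M^T$ is finite, Corollary \ref{cor:eqformalinjection} gives an injection
\[
H^*_T(M) \hookrightarrow H^*_T(M^T) \;=\; \bigoplus_{p\in M^T} S(\mft^*),
\]
so the task reduces to identifying its image. By Theorem \ref{thm:changskjelbred} this image equals $\bigcap_H i_H^*(H^*_T(M^H))$ over codimension-one subtori $H\subset T$, and only the finitely many identity components $T_\alpha=(\ker\alpha)_0$ of the nontrivial codimension-one isotropy groups contribute nontrivially; the corresponding $M^{T_\alpha}$ decompose, by the GKM hypothesis, as a disjoint union of invariant $2$-spheres and isolated fixed points.

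I would then analyse a single invariant two-sphere $S\subset M^{T_\alpha}$ with $T$-fixed points $p,q$. Since $T_\alpha$ acts trivially on $S$, a choice of splitting $\mft=\mfh\oplus\mft'$ with $\mfh=\Lie T_\alpha$ makes $T$-invariance on $\Omega(S)$ the same as $T/T_\alpha$-invariance and kills the $\mfh$-contraction in the equivariant differential, yielding the commuting-actions-type identification
\[
H^*_T(S)\;\cong\;S(\mfh^*)\otimes H^*_{T/T_\alpha}(S^2),
\]
compatible with restriction to $\{p,q\}$. Under this identification $H^*_T(\{p,q\})\cong S(\mfh^*)\otimes(\RR[u]\oplus\RR[u])\cong S(\mft^*)\oplus S(\mft^*)$, where $u$ is the coordinate dual to a chosen basis vector of $\mft'$; since $\alpha$ vanishes on $\mfh$ and is nonzero on $\mft'$, it is a nonzero scalar multiple of $u$ in $S(\mft^*)$.

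The computational heart is then Example \ref{ex:S1aufS2}: $H^*_{S^1}(S^2)=\{(f,g)\in\RR[u]\oplus\RR[u]\mid f(0)=g(0)\}$, with the restriction to fixed points being the tautological inclusion. Tensoring with $S(\mfh^*)$, the image of $H^*_T(S)\to H^*_T(\{p,q\})$ becomes precisely the pairs $(f_p,f_q)$ with $f_p-f_q$ divisible by $u$, equivalently by $\alpha$. Assembling the Chang--Skjelbred intersection, one congruence $f_p\equiv f_q\pmod{\alpha}$ is imposed for each invariant two-sphere of $M_1$, i.e.\ for each edge of $\Gamma$, giving exactly the asserted description.

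The step I expect to need the most care is the isomorphism $H^*_T(S)\cong S(\mfh^*)\otimes H^*_{T/T_\alpha}(S)$ together with tracking how the restriction maps and the identification $S(\mft^*)=S(\mfh^*)\otimes\RR[u]$ turn $u$-divisibility into $\alpha$-divisibility; this is essentially the commuting-actions principle of Remark \ref{rem:commutingactionprinciple} applied with $T_\alpha$ acting trivially, but it must be done naturally enough that the gluing over different edges is consistent. Beyond that point the argument is bookkeeping; orientability of $M$ does not enter here.
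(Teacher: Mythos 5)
Your proposal is correct and follows essentially the same route as the paper: the Chang--Skjelbred lemma reduces everything to the invariant two-spheres in the one-skeleton, and the congruence $f_p-f_q\in(\alpha)$ comes from the Mayer--Vietoris computation of the equivariant cohomology of such a sphere. The only (cosmetic) difference is that you split off the trivially acting subtorus $T_\alpha$ and quote Example \ref{ex:S1aufS2} literally before tensoring back with $S(\mfh^*)$, whereas the paper runs the $T$-equivariant Mayer--Vietoris sequence on the sphere directly, obtaining $H^*_T(N)\cong\{(f,g)\mid f|_{\mfh}=g|_{\mfh}\}$ and then identifying the kernel of the restriction $S(\mft^*)\to S(\mfh^*)$ with the principal ideal $(\alpha)$.
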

\begin{proof}
By Theorem \ref{thm:changskjelbred} the image of the (injective) natural restriction map $H^*_T(M)\to H^*_T(M^T)$ is 
\[
\bigcap_H i_H^*(H^*_T(M^H)),
\]
where $H$ runs through the codimension one subgroups of $T$, and $i_H\colon M^T\to M^H$ is the inclusion. As observed before, under our assumptions each component $N$ of one of the $M^H$ is either a single fixed point or a two-sphere $S^2$ with an action of $T/H \cong S^1$. To compute the equivariant cohomology of $N$, we generalize Example \ref{ex:S1aufS2} slightly: take $N = U\cup V$, where $U$ and $V$ are $T$-equivariantly homotopy equivalent to a fixed point. (Modulo the ineffective kernel, $N$ is equivariantly diffeomorphic to $S^2$ with the standard circle action. This follows from the theory of cohomogeneity-one actions, as such actions are determined by their group diagram.) So $H^*_T(U) = H^*_T(V) = S(\mft^*)$. Moreover, $U\cap V$ is homotopy equivalent to an invariant circle, whose isotropy Lie algebra is $\mfh$, so $H^*_T(U\cap V) = S(\mfh^*)$.  We thus obtain an exact sequence
\[
\cdots \longrightarrow H^*_T(N) \longrightarrow S(\mft^*)\oplus S(\mft^*) \overset{\varphi}\longrightarrow S(\mfh^*)\longrightarrow \cdots,
\]
where the map $\varphi$ is given by $\varphi(f,g) = f|_\mfh - g|_\mfh$. We thus obtain that
\[
H^*_T(N) \cong \{(f,g)\in S(\mft^*)\oplus S(\mft^*)\mid f|_\mfh = g|_\mfh\}.
\]
Now, the condition that $f|_\mfh =g|_\mfh$ is equivalent to the condition that the polynomial $f-g$ is in the kernel of the restriction map $S(\mft^*)\to S(\mfh^*)$. This kernel is a principal ideal, generated by any nonzero linear form that vanishes on $\mfh$. This is precisely the relation prescribed by the edge corresponding to $N$. 
\end{proof}

\begin{ex}\label{ex:cp2gkm}
Consider the action of $T^2$ on $\CC P^2$. We already understand the one-skeleton of the action, which consists of three invariant two-spheres. They are given by $\{[z:w:0]\}$, $\{[z:0:w]\}$ and $\{[0:z:w]\}$, whose isotropy groups are $\{(t,t)\mid t\in S^1\}$, $\{1\}\times S^1$, and $S^1\times \{1\}$, respectively. Choosing $\{u,v\}$ as the dual basis to the standard basis of $\mft\cong \RR^2$, the labels of the graph (which is a triangle) are given by $u$, $v$, and $u-v$.
  \begin{figure}[htb]
 \includegraphics[width=117pt]{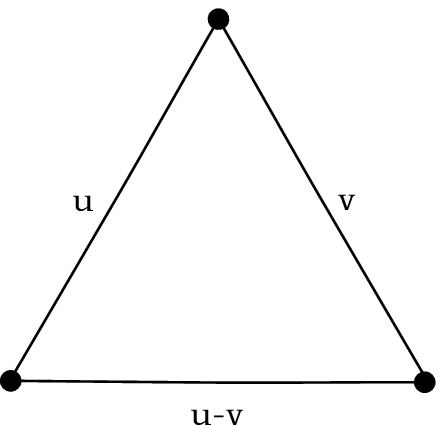}
 \caption{GKM graph of $\CC P^2$}
 \end{figure}
The equivariant cohomology is thus given by
\[
H^*_{T^2}(\CC P^2) \cong \Big\{(f,g,h)\in \RR[u,v]^3\Bigm| f-g\in (u),\, f-h\in (v),\, g-h\in (u-v)\Big\},
\]
with the $S(\mft^*)$-algebra structure induced from the equivariant cohomology of the fixed point set, i.e., componentwise multiplication.

From this, we can now determine the graded ring structure of the ordinary cohomology of $\CC P^2$. One checks that 
\[
(1,1,1),\qquad (v,v-u,0),\qquad (uv,0,0)
\]
are $\RR[u,v]$-module generators of the equivariant cohomology (which have degree $0,2,4$ as predicted by Theorem \ref{thm:bigthmequivformal}). To understand the ring structure we have to multiply 
\begin{align*}
(v,v-u,0)\cdot (v,v-u,0) &\equiv (v^2,(v-u)^2,0) \\
&\equiv (v^2,v^2-2uv+u^2,0) - v(v,v-u,0) + u(v,v-u,0)\\
&\equiv (uv,0,0)
\end{align*}
where we compute modulo $S^+(\mft^*)\cdot H^*_{T^2}(\CC P^2)$, i.e., in the quotient $H^*_{T^2}(\CC P^2)/S^+(\mft^*)\cdot H^*_{T^2}(\CC P^2)$. Also, $(v,v-u,0)^3 \equiv 0$. It follows that 
\[
H^*(\CC P^2) \cong \RR[\omega]/(\omega^3),
\]
where $\omega$ is of degree $2$ (which we of course knew before).
\end{ex}
A detailed introduction to GKM theory with many explicit computations can be found in \cite{Tymoczko}.

One can not only apply GKM theory to concrete computations, but also to obtain structural results on certain classes of actions. For instance, in \cite{GW} it was shown that all known examples of even-dimensional positively curved Riemannian manifolds admit isometric GKM actions, and described their GKM graphs. The graphs that occur are simplices and the complete bipartite graph $K_{3,3}$, with possibly all edges doubled or quadrupled. As an example, see Figure \ref{figrp} (which is taken from \cite{GW}) for the GKM graph of the action of the maximal torus of ${\mathrm{Spin}}(8)$ on $F_4/{\mathrm{Spin}}(8)$ by left multiplication.
  \begin{figure}[htb]
 \includegraphics[width=117pt]{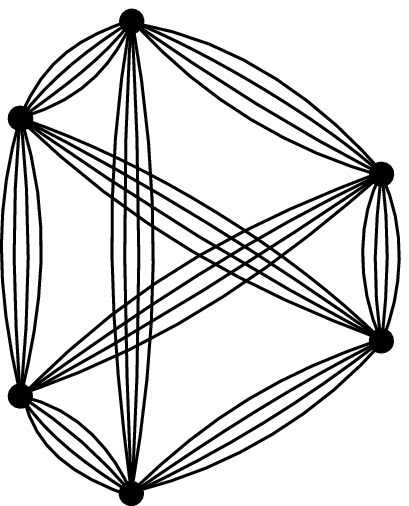}
 \caption{GKM graph of $F_4/{\mathrm{Spin}}(8)$}
 \label{figrp}
 \end{figure}
Restricting to GKM$_3$-actions (i.e., actions for which the two-skeleton of the action is the union of four-dimensional submanifolds) one obtains the following theorem.
\begin{thm}
Let $M$ be a compact, connected, positively curved, orientable Riemannian manifold. If $M$ admits an isometric GKM$_3$ torus action, then $M$ has the real cohomology ring of a compact rank one symmetric space.
\end{thm}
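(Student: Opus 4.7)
The strategy is to combine the classification of admissible GKM graphs for positively curved manifolds with the GKM presentation of equivariant cohomology, then recover $H^*(M)$ via the quotient construction of Theorem \ref{thm:bigthmequivformal}. First, one invokes the result of \cite{GW} that under our hypotheses the underlying combinatorial type of the GKM graph lies on a short explicit list: simplices and the complete bipartite graph $K_{3,3}$, each possibly with edges doubled or quadrupled. The GKM$_3$ assumption translates into the infinitesimal statement that at every fixed point no three weights of the isotropy representation are coplanar: if three weights spanned a $2$-plane, then the codimension-$2$ subtorus annihilated by that plane would have a fixed-point component of real dimension at least $6$, contradicting that the $2$-skeleton is a union of $4$-dimensional submanifolds. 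This rules out most of the doubled/quadrupled configurations and pins the admissible label systems down to those realised by CROSSes.

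Second, in each remaining combinatorial case one computes $H^*_T(M)$ from the GKM description and then passes to $H^*(M)\cong H^*_T(M)/(S^+(\mft^*)\cdot H^*_T(M))$ via Theorem \ref{thm:bigthmequivformal}. For the simplex on $n+1$ vertices with its standard labelling, an argument parallel to Example \ref{ex:cp2gkm} produces module generators in degrees $0,2,\ldots,2n$ and, after carrying out the multiplications modulo $S^+(\mft^*)\cdot H^*_T(M)$, yields $H^*(M)\cong \RR[\omega]/(\omega^{n+1})$ with $|\omega|=2$, i.e.\ the ring of $\CC P^n$. The two-vertex graphs (a pair of poles joined by several edges) produce sphere cohomology $\RR[\omega]/(\omega^2)$. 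For the $K_{3,3}$ graph, the analogous computation realises a truncated polynomial ring $\RR[\omega]/(\omega^3)$ with $|\omega|\in\{4,8\}$, matching $\HH P^2$ or the Cayley plane $\OO P^2$.

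The main obstacle is the bookkeeping in the $K_{3,3}$ case: one must exhibit explicit $S(\mft^*)$-module generators of the predicted degrees, verify multiplicative closure modulo $S^+(\mft^*)\cdot H^*_T(M)$, and check that the only labellings of $K_{3,3}$ simultaneously compatible with positive curvature, GKM$_3$, and orientability are the ones arising from $\HH P^2$ and $\OO P^2$. Two a priori inputs keep this tractable: Proposition \ref{prop:equivformalfinitefixedpointset} forces $H^{\odd}(M)=0$ (since the fixed point set is finite), and Proposition \ref{prop:aeqformalfixedpointset} fixes $\dim H^*(M)$ to equal the number of vertices of the graph. These constraints drastically restrict the possible quotient rings, and combined with the rigidity of the label system they leave precisely the real cohomology rings of CROSSes as the possible outputs.
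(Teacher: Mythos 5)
The combinatorial heart of the argument is missing, and what you put in its place does not hold up. The result you invoke from \cite{GW} in your first step --- that the graphs occurring are simplices and $K_{3,3}$ with possibly doubled or quadrupled edges --- is a description of the GKM graphs of the \emph{known examples} of positively curved manifolds, not a classification of the graphs that can arise under the hypotheses of the theorem; establishing such a classification is precisely the content of the proof. Your substitute, namely that the GKM$_3$ condition ``rules out most of the doubled/quadrupled configurations and pins the admissible label systems down to those realised by CROSSes,'' cannot work: GKM$_3$ is a purely linear-algebraic condition on the weights (your reformulation of it is fine) and is satisfied, for instance, by the standard torus action on $\CC P^2\times \CC P^2$, whose GKM graph is a product of triangles and whose cohomology is not that of a compact rank one symmetric space. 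Positive curvature must enter the determination of the graph in an essential way, and in your write-up it never does.

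In the actual proof it enters through the $2$-skeleton: by GKM$_3$, $M_2$ is a union of four-dimensional invariant submanifolds, each a fixed point component of a codimension-two subtorus, hence totally geodesic, therefore itself positively curved, and carrying an effective $T^2$-action. The classification of positively curved four-dimensional $T^2$-manifolds by Grove and Searle \cite{GroveSearle} then identifies each such piece equivariantly, which pins down the possible two-dimensional ``faces'' of the GKM graph together with their labels; a combinatorial analysis of how these faces can be assembled yields the admissible graphs and label systems. Only after that does one run the cohomology computation you describe in your second step --- which is the right procedure, parallel to Example \ref{ex:cp2gkm}, and your use of Propositions \ref{prop:equivformalfinitefixedpointset} and \ref{prop:aeqformalfixedpointset} to force $H^{\odd}(M)=0$ and to fix $\dim H^*(M)$ as the number of vertices is appropriate. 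Without the Grove--Searle input and the ensuing combinatorial classification, the first half of your argument is circular.
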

To prove this theorem we determined all possible GKM graphs under the given curvature assumption, using the classification of four-dimensional positively curved $T^2$-manifolds by Grove and Searle \cite{GroveSearle}.

Finally, we mention that GKM theory allows for various generalizations. One possibility to generalize is to allow a nonisolated fixed point set. This was considered in the context of Hamiltonian actions on symplectic manifolds \cite{GuilleminHolm}, and for equivariantly formal torus actions with one-dimensional fixed point set \cite{Chen}. In He's paper an important feature of the class of actions he considers is that the one-skeleton of the action is the union of (three-dimensional) submanifolds each containing an arbitrary number of fixed point components, contrary to the the classical case in which the invariant two-spheres always contain exactly two fixed points. GKM theory for actions without fixed points was considered in \cite{GNT}, for a certain class of Cohen-Macaulay torus actions (see Section \ref{sec:CohenMacaulay} below). Instead of the one-skeleton of the action one describes the equivariant cohomology of the action in terms of the $b+1$-skeleton $M_{b+1}$ of the action, where $b$ is the lowest occurring dimension of an orbit. The class of actions considered in \cite{GNT} has the property that $M_{b+1}$ is the union of submanifolds, each containing exactly two components of $M_b$. It is also possible to generalize GKM theory to actions of arbitrary compact Lie groups \cite{GM}, as well as to possibly infinite-dimensional equivariant cell complexes \cite{HHH}. One can also abstract from torus actions on manifolds and consider GKM graphs as objects of independent interest (see e.g.\ \cite{GuilleminZara}).  

\section{Algebraic generalizations of equivariant formality} \label{sec:CohenMacaulay}

An important property of equivariant formality of a torus action is that the restriction map
\begin{equation}\label{eq:restrictionmapCM}
H^*_T(M) \longrightarrow H^*_T(M^T)
\end{equation}
is injective. Because the kernel of this map is the torsion submodule by the Borel Localization Theorem \ref{thm:borellocalization}, this property is in fact equivalent not to the freeness of $H^*_T(M)$ but to its torsion-freeness. One can therefore ask the question how different equivariantly formal actions are from actions whose equivariant cohomology is torsion-free.

It was shown in \cite{Allday} that for smooth actions of at most two-dimensional tori, torsion-freeness of the equivariant cohomology is equivalent to equivariant formality. The first example of a non-equivariantly formal torus action whose equivariant cohomology is torsion-free was given in \cite{FranzPuppe}. 

Recently, Allday--Franz--Puppe interpolated between torsion-freeness and freeness of the equivariant cohomology, by using the notion of syzygies \cite{AlldayFranzPuppe}: already Atiyah \cite[Lecture 7]{Atiyah} and Bredon \cite[Main Lemma]{Bredon} observed that equivariantly formal actions satisfy a stronger property than the Chang-Skjelbred Lemma, Theorem \ref{thm:changskjelbred}, namely the exactness of the so-called \emph{Atiyah-Bredon sequence}
\[
0 \to H^*_T(M) \to H^*_T(M^T) \to H^{*+1}_T(M_1,M^T) \to \cdots \to H^{*+k}_T(M_k,M_{k-1})\to 0,
\]
where $M_i$ is the union of the $T$-orbits of dimension at most $i$. Here, we use relative equivariant cohomology in the Borel model (cf.\ Remark \ref{rem:Borelmodel}) to give meaning to the cohomologies occurring in the sequence. In \cite{FranzPuppe25} it was shown that exactness of this sequence is even equivalent to equivariant formality. More precise information was given in \cite{AlldayFranzPuppe}, where the authors showed that exactness of this sequence at the first $i$ positions is equivalent to $H^*_T(M)$ being an $i$th syzygy. Examples of torus actions whose equivariant cohomologies vary among all possible syzygy orders are given by so-called big polygon spaces \cite{FranzBig}.

A different way in which one can generalize the notion of equivariant formality is that of a Cohen-Macaulay action, introduced in \cite{GT}. The relevance of the Cohen-Macaulay property was already observed in \cite{Atiyah}.
\begin{defn} We say that an action of a compact Lie group $G$ on a compact manifold $M$ is \emph{Cohen-Macaulay} if $H^*_G(M)$ is a Cohen-Macaulay module over $S(\mfg^*)^G$.
\end{defn} 
To motivate this notion, let us restrict to the action of a torus $T$. (Note as well that the Cohen-Macaulay property for the action of a compact, connected Lie group $G$ is equivalent to that of the restriction of the action to a maximal torus, see \cite[Proposition 2.9]{GR}.) It turns out that the Cohen-Macaulay property is equivalent to the exactness of an Atiyah-Bredon-type sequence
\[
0 \to H^*_T(M) \to H^*_T(M_b) \to H^{*+1}_T(M_{b+1},M_b) \to \cdots \to H^{*+k}_T(M_k,M_{k-1})\to 0,
\]
where $b$ is the lowest occurring orbit dimension, see \cite{GT} or \cite[Section 5]{FranzPuppe2}. In particular, the equivariant cohomology algebra, for Cohen-Macaulay actions, is computable as for equivariantly formal actions, by determining the image of the restriction map $H^*_T(M)\to H^*_T(M_b)$. Note however that the natural map $H^*_T(M)\to H^*(M)$ is not surjective for Cohen-Macaulay actions, which is why this notion is less useful for computing the ordinary cohomology of a $T$-manifold (however, one may divide both the acting torus and the manifold by a locally freely acting $b$-dimensional subtorus to obtain an equivariantly formal action for which the considerations of Section \ref{sec:HMHTM} hold true).

For torus actions with fixed points, or more generally for $G$-actions with points with maximal isotropy rank the notion of being Cohen-Macaulay coincides with equivariant formality \cite[Proposition 2.5]{GR}.

Many geometrically important classes of actions are Cohen-Macaulay. Besides the already known classes of equivariantly formal actions, like Hamiltonian actions on symplectic manifolds, see Example \ref{ex:morsebotteqformal}, they include:
\begin{enumerate}
\item $G$-actions for which all points have the same isotropy rank \cite[Corollary 4.3]{GR}, in particular, transitive $G$-actions.
\item Actions of cohomogeneity one \cite{GM1}. One can also determine the multiplicative structure of the equivariant cohomology of cohomogeneity one manifolds explicitly, see \cite{CGHM}. Note that cohomogeneity-two actions are not necessarily Cohen-Macaulay; an easy example is a $T^2$-action on $(S^1\times S^3)\#(S^2\times S^2)$ with exactly $2$ fixed point (see \cite{OrlikRaymond} and \cite[Example 4.3]{GM1}). 
\item The action of the closure of the Reeb flow of a $K$-contact manifold \cite{GNT}.
\item Hyperpolar actions on symmetric spaces \cite{GHM}. 
\end{enumerate}

\section{Actions on foliated manifolds}

The main algebraic ingredient of the construction of the Cartan model is the structure of a $G$-differential graded algebra on $\Omega(M)$ induced by a $G$-action on $M$. That is, the $G$-action induces contraction operators $i_X$ and Lie derivative operators $L_X$, for every $X\in \mfg$, on $\Omega(M)$. It was Cartan's original approach to abstract from the concrete geometric setting, and consider equivariant cohomology of abstract $G$-differential graded algebras, see \cite[Section 4]{Cartan1}.

In \cite{GT2} this was applied this to foliated manifolds, using the notion of transverse action from \cite[Section 2]{AlvarezLopez}:
\begin{defn}
A \emph{transverse action} of a finite-dimensional Lie algebra $\mfg$ on a foliated manifold $(M,\cF)$ is a Lie algebra homomorphism
\[
\mfg\longrightarrow l(M,\cF).
\]
\end{defn}
Here, $l(M,\cF)= L(M,\cF)/\Xi(\cF)$ is the Lie algebra of \emph{transverse fields}: $L(M,\cF)$ is the Lie algebra of \emph{foliate fields}, i.e., vector fields whose flows send leaves to leaves, which is the same as the normalizer of the subalgebra of vector fields $\Xi(\cF)$ tangent to $\cF$ in the Lie algebra $\Xi(M)$ of all vector fields on $M$. For the trivial foliation by points, a transverse action is the same as an ordinary infinitesimal action on $M$.

Recall that on a foliated manifold $(M,\F)$  the $\F$-basic forms
\[
\Omega(M,\cF)=\{\omega\in \Omega(M)\mid i_X\omega=\cL_X\omega=0 \text{ for all }X\in \Xi(\F)\}
\] 
define, in the same way as the $G$-basic forms introduced in Definition \ref{defn:basicforms}, a subcomplex of the de Rham complex of $M$, thus yielding the \emph{$\F$-basic cohomology} $H^*(M,\F)$. This cohomology was first considered by Reinhart \cite{Reinhart}.

A transverse action of a finite-dimensional Lie algebra $\mfg$ on a foliated manifold $(M,\cF)$ induces the structure of a $\mfg$-differential graded algebra, thus yielding a notion of \emph{equivariant basic cohomology} \cite{GT2} for transverse actions. Explicitly, one defines on
\[
\Omega_{\mfg}(M,\F) := (S(\mfg^*)\otimes \Omega(M,\F))^{\mfg}
\]
an equivariant differential $d_\mfg$ in the same way as in Definition \ref{def:equivariantdifferential}, and obtains $H^*_\mfg(M,\F)$ as the cohomology of this complex.

The main example for which this variant of equivariant cohomology was investigated was the \emph{Molino action} of a Killing foliation \cite{Molino} (see \cite[Section 4.1]{GT2} for a short summary): this is an action of an abelian Lie algebra $\mfa$ whose orbits are the leaf closures of the foliation. Imitating classical results on the fixed point sets of torus actions as in Section \ref{sec:fixedpoints}, one can use this theory to obtain results about the set of closed leaves of a Killing foliation. For example, one obtains the following generalization of Proposition \ref{prop:aeqformalfixedpointset} \cite{GT2}:

\begin{thm} For any transversely oriented Killing foliation $\cF$ on a compact manifold $M$, the union $C\subset M$ of closed leaves of $M$ satisfies
\[
\dim H^*(C,\cF)\leq \dim H^*(M,\cF),
\]
and equality holds if and only if the Molino action is equivariantly formal.
\end{thm}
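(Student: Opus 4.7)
The plan is to replicate the proof of Proposition~\ref{prop:aeqformalfixedpointset} in the transverse setting, using $\mfa$ in place of $\mft$ and $C$ in place of $M^T$. The key observation is that the leaf closures of $\cF$ are precisely the orbits of the Molino action, so a leaf is closed if and only if the transverse $\mfa$-action is trivial on it; thus $C$ is the exact transverse analog of the fixed point set. I would organize the argument around three ingredients: (i) computing $H^*_\mfa(C,\cF)$, (ii) establishing a Borel-type localization theorem, and (iii) exploiting the spectral sequence of the Cartan model for the transverse action.

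First, on $C$ the Molino action is trivial, so by a direct adaptation of Example~\ref{ex:trivialaction} (using that $\mfa$ is abelian and hence $S(\mfa^*)^\mfa = S(\mfa^*)$) one obtains $H^*_\mfa(C,\cF) \cong S(\mfa^*)\otimes H^*(C,\cF)$. In particular this is a free $S(\mfa^*)$-module of rank $\dim H^*(C,\cF)$. Second, I would prove a Borel localization statement: the restriction map $H^*_\mfa(M,\cF)\to H^*_\mfa(C,\cF)$ has kernel and cokernel which are $S(\mfa^*)$-torsion. The proof should follow the outline of Theorem~\ref{thm:borellocalization}: on a saturated tubular neighborhood of a non-closed leaf closure the isotropy subalgebra of the transverse $\mfa$-action is a proper subalgebra $\mfa'\subsetneq \mfa$, so any nonzero linear form on $\mfa$ vanishing on $\mfa'$ annihilates the equivariant basic cohomology of such a neighborhood (the analog of Lemma~\ref{lem:maptoorbitsupport}). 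Covering $M\setminus C$ by finitely many such neighborhoods (compactness plus the Killing hypothesis ensures this works) and using a Mayer--Vietoris sequence in equivariant basic cohomology then yields torsion for the complement, and localizing at $S=S(\mfa^*)\setminus\{0\}$ gives the claim. Combining (i) and (ii) one has $\rk_{S(\mfa^*)} H^*_\mfa(M,\cF) = \dim H^*(C,\cF)$.

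Third, by the spectral sequence of the Cartan model for the transverse action of $\mfa$ on $(M,\cF)$, the $E_1$-page is $S(\mfa^*)\otimes H^*(M,\cF)$, which is free of rank $\dim H^*(M,\cF)$. Since ranks only decrease when passing to subquotients, and since the rank of $H^*_\mfa(M,\cF)$ equals the rank of $E_\infty$ (the transverse analog of Corollary~\ref{Appendix:Cor:RankGleich}), we deduce
\[
\dim H^*(C,\cF) \;=\; \rk_{S(\mfa^*)} H^*_\mfa(M,\cF) \;=\; \rk E_\infty \;\leq\; \rk E_1 \;=\; \dim H^*(M,\cF),
\]
which is the desired inequality. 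Equality forces $\rk E_\infty = \rk E_1$; since at each stage $E_r$ remains free over $S(\mfa^*)$ until the first nonzero differential appears, a nonzero differential would strictly decrease the rank, so all $d_r$ vanish, meaning the Molino action is equivariantly formal. Conversely, equivariant formality gives $H^*_\mfa(M,\cF)\cong S(\mfa^*)\otimes H^*(M,\cF)$ as modules, yielding equality of dimensions.

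The main obstacle is step (ii), the Borel localization theorem for the Molino action. The subtlety is that one is dealing with an infinitesimal (Lie algebra) action rather than a Lie group action, so the averaging argument used in Theorem~\ref{thm:equivMV} is not directly available; one must either work at the level of leaf closures (using that for Killing foliations the leaf closure space is sufficiently well-behaved) or exploit the structure theorem of Molino to reduce to a torus action upstairs in a basic fibration. Either route requires care in constructing saturated tubular neighborhoods of leaf closures compatible with the transverse $\mfa$-action and in verifying that equivariant basic Mayer--Vietoris exactness still holds in this transverse setting.
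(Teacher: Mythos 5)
Your plan is exactly the route the paper itself indicates: the theorem is stated as the generalization of Proposition~\ref{prop:aeqformalfixedpointset} obtained by ``imitating classical results on the fixed point sets of torus actions,'' with the technical ingredients (equivariant basic Mayer--Vietoris, Borel localization for the Molino action, and the rank comparison via the spectral sequence) deferred to \cite{GT2}. Your outline matches this approach step for step, and you correctly identify step (ii) as the point where the real work lies.
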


On the other hand, there are criteria for equivariant formality of the Molino action, similar to the classical setting. For example we have the following generalization of Example \ref{ex:morsebotteqformal} \cite{GT2}:
\begin{thm}\label{thm:basiceqformal}
If $\cF$ is a transversely oriented Killing foliation on a compact manifold $M$, and $f\colon M\to \RR$ a basic Morse-Bott function whose critical set is the union of closed leaves of $\cF$, then the Molino action is equivariantly formal.
\end{thm}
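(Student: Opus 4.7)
The plan is to transplant the classical Morse--Bott argument of Example \ref{ex:morsebotteqformal} into the transverse setting. Since $f$ is basic, every sublevel set $M^a := \{p\in M\mid f(p)\leq a\}$ is $\cF$-saturated, so the basic de Rham complex $\Omega(M^a,\cF)$ and its equivariant extension $\Omega_\mfa(M^a,\cF)$ for the Molino action are well-defined, as are relative versions for the pair $(M^{\kappa+\varepsilon},M^{\kappa-\varepsilon})$ around a critical value $\kappa$. I would order the critical values $\kappa_1<\cdots<\kappa_N$ and prove by induction on $i$ that each $H^*_\mfa(M^{\kappa_i+\varepsilon},\cF)$ is a free $S(\mfa^*)$-module. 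Once this is established for $i=N$, freeness of $H^*_\mfa(M,\cF)$ over $S(\mfa^*)$ yields equivariant formality of the Molino action via the basic analogue of Theorem \ref{thm:bigthmequivformal} proved in \cite{GT2}.

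The inductive step at level $\kappa_i$ rests on two ingredients. First, the critical submanifold $C_i:=\Crit(f)\cap f^{-1}(\kappa_i)$ is by hypothesis a union of closed leaves of $\cF$; equivalently, the Molino orbits through points of $C_i$ coincide with those leaves, so the $\mfa$-fundamental vector fields are tangent to $\cF|_{C_i}$. Consequently, on the basic complex $\Omega(C_i,\cF|_{C_i})$ all contractions $i_{\overline{X}}$ vanish and the equivariant differential reduces to the ordinary basic differential, so exactly as in the trivial action case (Example \ref{ex:trivialaction}) one obtains
\[
H^*_\mfa(C_i,\cF|_{C_i}) \cong S(\mfa^*)\otimes H^*(C_i,\cF|_{C_i})
\]
as $S(\mfa^*)$-modules, in particular free. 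Second, because $\cF$ is transversely oriented Killing and $C_i$ is $\mfa$-invariant, its normal bundle inside $M$ inherits a compatible foliated and $\mfa$-equivariant structure, and the Hessian of $f$ supplies an orientation. This should yield an equivariant basic Thom isomorphism
\[
H^{*+\lambda_i}_\mfa(M^{\kappa_i+\varepsilon},M^{\kappa_i-\varepsilon};\cF) \cong H^*_\mfa(C_i,\cF|_{C_i}),
\]
where $\lambda_i$ is the Morse--Bott index of $C_i$. Feeding this into the long exact sequence of the pair, the inductive hypothesis on the right and freeness of the Thom term on the left force the sequence to collapse into a short exact sequence of free $S(\mfa^*)$-modules, so $H^*_\mfa(M^{\kappa_i+\varepsilon},\cF)$ is free.

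The main obstacle will be the construction of the equivariant basic Thom isomorphism: one must produce a Thom form that simultaneously lies in the basic subcomplex, is $\mfa$-equivariantly closed for the Molino action, and represents the Hessian-induced orientation. This is precisely where the Killing hypothesis is essential, as it guarantees a bundle-like metric whose transverse part is preserved both by holonomy and by the Molino action, making a Mathai--Quillen / Berline--Vergne construction of the equivariant Thom form possible inside $\Omega_\mfa(M,\cF)$. Once this Thom isomorphism, together with the equivariant basic Mayer--Vietoris and excision tools from \cite{GT2}, is in hand, the inductive argument goes through essentially verbatim from the classical proof, establishing freeness of $H^*_\mfa(M,\cF)$ and hence equivariant formality.
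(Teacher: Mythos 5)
Your outline is the right one: the paper itself gives no proof of this theorem but refers to \cite{GT2}, and the argument there is exactly the transplantation of the Atiyah--Bott induction over critical levels, with an equivariant basic Thom isomorphism for the (foliated, transversely oriented) negative normal bundles, that you describe. However, there is a genuine gap at the decisive step. You assert that ``the inductive hypothesis on the right and freeness of the Thom term on the left force the sequence to collapse into a short exact sequence.'' This is false as a piece of algebra: a long exact sequence whose outer terms are free $S(\mfa^*)$-modules can perfectly well have nonzero connecting homomorphisms (and then the middle term need not be free -- which is precisely the failure mode one must rule out). What has to be \emph{proved} is that the map $H^*_\mfa(M^{\kappa_i+\varepsilon},M^{\kappa_i-\varepsilon};\cF)\to H^*_\mfa(M^{\kappa_i+\varepsilon},\cF)$ is injective, equivalently that the restriction to $M^{\kappa_i-\varepsilon}$ is surjective.

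The missing ingredient is the Euler class argument of Atiyah--Bott: composing the above map with restriction to $C_i$ and with the inverse of the Thom isomorphism, one obtains multiplication by the equivariant basic Euler class $e_\mfa(\nu_i^-)\in H^{\lambda_i}_\mfa(C_i,\cF)\cong S(\mfa^*)\otimes H^*(C_i,\cF)$ of the negative normal bundle. One must show this class is a non-zero-divisor. This is where the hypothesis that $\Crit(f)$ is the \emph{entire} union of closed leaves enters in an essential way (your proposal uses it only to trivialize the transverse action on $C_i$, which is not enough): since every leaf near $C_i$ but not in $C_i$ is non-closed, the linearized transverse $\mfa$-action on the fibres of the normal bundle of $C_i$, hence on the invariant subbundle $\nu_i^-$, has no nonzero fixed vectors. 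Decomposing $\nu_i^-$ into weight spaces, the component of $e_\mfa(\nu_i^-)$ in $S^{\lambda_i/2}(\mfa^*)\otimes H^0(C_i,\cF)$ is a product of nonzero weights, so $e_\mfa(\nu_i^-)$ is indeed a non-zero-divisor in the free module $S(\mfa^*)\otimes H^*(C_i,\cF)$. Only with this injectivity in hand does the long exact sequence break into short exact sequences
\[
0 \longrightarrow H^*_\mfa(M^{\kappa_i+\varepsilon},M^{\kappa_i-\varepsilon};\cF) \longrightarrow H^*_\mfa(M^{\kappa_i+\varepsilon},\cF)\longrightarrow H^*_\mfa(M^{\kappa_i-\varepsilon},\cF)\longrightarrow 0,
\]
which split because the quotient is free by induction, so that the middle term is free. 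With this step added (and the Thom isomorphism constructed as you indicate), your argument is the proof of \cite{GT2}.
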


This criterion was applied to concrete geometric situations such as contact \cite{GNT} or cosymplectic geometry \cite{BaGo} to count closed Reeb orbits. In contact geometry, the existence of a momentum map is automatic, and just as in the symplectic setting, a generic component of the momentum map is a Morse-Bott function. As its critical set is the correct one we can apply Theorem \ref{thm:basiceqformal} to the foliation given by the Reeb vector field (we need $M$ to be $K$-contact in order for the foliation to be Riemannian):
\begin{thm} 
Let $M$ be a compact $K$-contact manifold, and $C\subset M$ the union of closed Reeb orbits. Then
\[
\dim H^*(C,\cF) = \dim H^*(M,\cF).
\]
In particular, if the number of closed Reeb orbits is finite, then it is given by $\dim H^*(M,\cF)$.
\end{thm}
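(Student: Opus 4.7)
The plan is to reduce the statement to Theorem \ref{thm:basiceqformal} combined with the preceding theorem relating basic Betti numbers to equivariant formality of the Molino action. First, I would recall that for a compact $K$-contact manifold $M$ with contact form $\eta$ and Reeb vector field $R$, the $1$-dimensional foliation $\cF$ defined by $R$ is transversely oriented (via $d\eta$) and Riemannian, with bundle-like metric coming from the $K$-contact structure. The closure of the flow of $R$ in the isometry group of such a metric is a torus $T$, and $\cF$ is a Killing foliation whose Molino action is the transverse action of the Lie algebra $\mfa$ of $T/\overline{\langle R\rangle}$. The leaves corresponding to closed orbits of $R$ are precisely the closed leaves of $\cF$, so $C$ coincides with the union of closed leaves.

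Second, I would produce the required basic Morse-Bott function. Any element $X$ in the Lie algebra of $T$ is a transverse Killing field, and the function $\mu^X := \eta(\overline{X})$ is basic (since $R$-invariant and annihilating $R$). A direct computation with the contact analogue of the momentum map identity $i_{\overline{X}}d\eta = -d\mu^X$ shows that the critical set of $\mu^X$ is exactly the zero set of the component of $\overline{X}$ transverse to $\cF$. Choosing $X$ to be a generic element whose closure in $T$ equals all of $T$, this zero set is precisely the union of $T$-fixed leaves, i.e., the closed leaves. The Morse-Bott property follows from the standard argument using the $T$-invariant bundle-like metric, since $\mu^X$ is, transversely, a generic component of a torus momentum map on the (possibly singular) leaf space; one can make this rigorous by working locally in Molino charts where the foliation looks like a torus action and quoting the classical Hamiltonian Morse-Bott result from Example \ref{ex:morsebotteqformal}.

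With these ingredients in hand, Theorem \ref{thm:basiceqformal} applies to give that the Molino action is equivariantly formal, and the preceding theorem then yields
\[
\dim H^*(C,\cF) = \dim H^*(M,\cF),
\]
which is the main assertion. For the ``in particular'' clause, if there are only finitely many closed Reeb orbits, then $C$ is a finite disjoint union of closed leaves. The restricted foliation $\cF|_C$ has a point as its transverse model on each component, so the basic complex of $(C,\cF|_C)$ is concentrated in degree zero and counts the connected components; hence $\dim H^*(C,\cF)$ equals the number of closed Reeb orbits.

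The main obstacle I anticipate is the second step: carefully justifying that the critical set of $\mu^X$ is the union of closed leaves and that $\mu^X$ is basic Morse-Bott in the sense needed by Theorem \ref{thm:basiceqformal}. Near a closed leaf $L$ one has a foliated tubular neighborhood on which the transverse geometry is that of a torus acting on a slice, and the required Morse-Bott non-degeneracy in the transverse Hessian reduces to the classical symplectic statement; near a non-closed leaf, the generic choice of $X$ forces $\overline{X}$ to have a nonzero component transverse to $\cF$, ruling out critical points. Packaging this carefully, rather than the formal deduction at the end, is where the real work lies.
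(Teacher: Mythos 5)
Your proposal follows essentially the same route as the paper: the paper likewise invokes the automatic existence of the contact momentum map, observes that a generic component is a basic Morse--Bott function whose critical set is the union of closed Reeb orbits, and then applies Theorem \ref{thm:basiceqformal} together with the preceding theorem on equivariant formality of the Molino action. Your explicit identification $\mu^X=\eta(\overline{X})$ and the discussion of its critical set correctly fill in the details the survey leaves implicit.
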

On a compact $K$-contact manifold $(M,\alpha)$ of dimension $2n+1$, the elements $1,[d\alpha],\ldots,[d\alpha]^n$ are nonzero in $H^*(M,\cF)$; in this way we obtain an alternative proof of the statement due to Rukimbira \cite[Corollary 1]{Rukimbira} that the Reeb flow of any compact $K$-contact manifold has at least $n+1$ closed Reeb orbits. Moreover, by an easy application of the Gysin sequence, we find:
\begin{thm}
Let $M$ be a compact $K$-contact manifold of dimension $2n+1$ with only finitely many closed Reeb orbits. Then the number of closed Reeb orbits is $n+1$ if and only if $M$ is a real cohomology sphere,
\end{thm}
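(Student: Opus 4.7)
The plan is to combine the preceding theorem, which identifies the number of closed Reeb orbits with $\dim H^*(M,\cF)$ when the former is finite, with the Gysin sequence of the Reeb flow. Since $1,[d\alpha],\ldots,[d\alpha]^n$ are already known to be nonzero in $H^*(M,\cF)$, the condition that the number of closed Reeb orbits equals $n+1$ is equivalent to the basic cohomology being exhausted by these powers, i.e.\ $H^*(M,\cF)\cong \RR[x]/(x^{n+1})$ with $x=[d\alpha]$ in degree $2$; the task then reduces to showing that this holds precisely when $M$ is a real cohomology sphere. For this I would invoke the Gysin sequence of the Reeb flow (available because the $K$-contact condition makes the flow Riemannian), in the form
\[
\cdots \longrightarrow H^{k}(M,\cF)\xrightarrow{\pi^*} H^{k}(M)\longrightarrow H^{k-1}(M,\cF)\xrightarrow{\wedge [d\alpha]} H^{k+1}(M,\cF)\longrightarrow H^{k+1}(M)\longrightarrow \cdots
\]
with Euler class $[d\alpha]$, and use that basic cohomology vanishes above the codimension $2n$ of the foliation.

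For the forward implication, I would substitute the assumed multiplicative description of $H^*(M,\cF)$: the map $\wedge [d\alpha]\colon H^{2j}(M,\cF)\to H^{2j+2}(M,\cF)$ is an isomorphism for $0\leq j\leq n-1$, zero at $j=n$, and the odd basic cohomology vanishes. Reading off the sequence degree by degree then immediately yields $H^p(M)=0$ for $0<p<2n+1$, $H^0(M)=\RR$, and at the top the sequence $0\to H^{2n+1}(M)\to H^{2n}(M,\cF)=\RR\to 0$ gives $H^{2n+1}(M)=\RR$, so $M$ is a real cohomology sphere. Conversely, assuming $H^*(M)=H^*(S^{2n+1})$, I would induct on the degree, alternating between odd and even positions: each odd position of the sequence forces $H^{\odd}(M,\cF)=0$, and each even position identifies $H^{2j+2}(M,\cF)$ with the image of $\wedge [d\alpha]\colon H^{2j}(M,\cF)\to H^{2j+2}(M,\cF)$, so that $H^{2j}(M,\cF)=\RR\cdot[d\alpha]^j$ for $0\leq j\leq n$. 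This yields $\dim H^*(M,\cF)=n+1$, and the preceding theorem then gives the claimed count.

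The main obstacle I expect is the careful bookkeeping at the boundary degrees of the Gysin sequence, especially at the top end $k\in\{2n,2n+1\}$, where one must be sure that the sequence really truncates correctly; concretely, one needs that $H^{2n+1}(M,\cF)=0$ (which follows because basic cohomology cannot exceed the codimension $2n$) and, in the reverse direction, that the chain of isomorphisms forced by the sequence reaches all the way up to $H^{2n}(M,\cF)=\RR$ without an extra class sneaking in. Once these endpoints are nailed down, the induction through the middle range is entirely symmetric and mechanical, and the forward and reverse directions are essentially the same degree-wise computation read in opposite directions.
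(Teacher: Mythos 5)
Your argument is correct and is exactly the route the paper takes: it derives the statement from the preceding theorem (identifying the number of closed Reeb orbits with $\dim H^*(M,\cF)$) together with the Gysin sequence of the Reeb flow, which the paper invokes without writing out the degree-by-degree bookkeeping you supply. Your handling of the endpoints (vanishing of basic cohomology above the codimension $2n$, and the identification $H^{2n+1}(M)\cong H^{2n}(M,\cF)$) is the right way to close the argument.
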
 

Similar results can be derived in other geometries where there naturally appears a Riemannian foliation, such as $K$-cosymplectic geometry (see \cite[Section 8]{BaGo}).

\appendix
\section{Spectral sequences and the module structure on equivariant cohomology}

We present the basics of the spectral sequence of a filtration and apply them to the Cartan model of equivariant cohomology. By also paying attention to the multiplicative structure on spectral sequences, this tool allows us to derive some fundamental properties of the $S(\mathfrak{g}^*)^G$-module structure on $H^*_G(M)$: it is finitely generated and its rank agrees with that of the final page of the spectral sequence associated to a certain filtration. Also, we use spectral sequences to prove the torus case of Remark \ref{rem:commutingactionprinciple}.
Finally we give an example where $E_\infty$ and $H^*_G(M)$ are not isomorphic as $S(\mathfrak{g}^*)^G$-modules, a point which is in several places unclear in the literature.

Before we start, we want to point out that the goal here is not to give a complete introduction to spectral sequences but rather to provide the reader with all the algebraic background that is needed for our (and many other topological) applications.
In particular, we avoid the finer details of convergence by restricting to first-quadrant spectral sequences. For an in-depth introduction we recommend, e.g., Chapter 5 of \cite{Weibel}.

\subsection{Basic definitions}

Let $R$ be a commutative ring. When applying algebraic results to equivariant cohomology we will always take $R=\mathbb{R}$.

\begin{defn}
A \emph{(cohomology) spectral sequence} is a sequence $\{(E_r,d_r)\}_{r\geq 0}$ of bigraded $R$-modules $E_r=\bigoplus_{p,q\in\mathbb{Z}}E^{p,q}_r$ with $R$-linear differentials $d_r^{p,q}\colon E_r^{p,q}\rightarrow E^{p+r,q-r+1}_r$ satisfying $d_r\circ d_r=0$ and isomorphisms $E_{r+1}^{p,q}\cong (\ker d_r^{p,q})/(\im d_r^{p-r,q+r-1})$.
\end{defn}

A spectral sequence is often compared to a book, where for turning the $r$th page $E_r$ one takes cohomology to arrive at the next page $E_{r+1}\cong H^*(E_r,d_r)$. The advantage of spectral sequences is that they can be used to approximate the cohomology of a cochain complex by breaking down the transition $(C^*,d)\rightsquigarrow H^*(C^*,d)$ into smaller steps. Let us now make this idea precise by defining a suitable notion of convergence.

A \emph{first-quadrant} spectral sequence is a spectral sequence $(E_r,d_r)$ where $E_r^{p,q}=0$ whenever $p<0$ or $q<0$. Note that if we fix a bidegree $(p,q)$ and start turning through the pages, the differentials $d_r^{p,q}$ (resp.\ $d_r^{p-r,q+r-1}$) eventually leave (resp.\ come from outside) the first-quadrant and thus are trivial. This implies that $E_r^{p,q}\cong E_l^{p,q}$ for all $l\geq r$. This stable value is denoted by $E_\infty^{p,q}$ and the the bigraded $R$-module $E_\infty$ is called the final page of the spectral sequence. If for some $r$ we have $d_i=0$ for $i\geq r$, or equivalently $E_r=E_\infty$, we say that the spectral sequence \emph{collapses} at $E_r$. While we will solely be interested in first-quadrant spectral sequences, the definition of $E_\infty$ is not limited to this special case and makes sense whenever the pointwise limit exists.

\begin{defn}
A \textit{filtration} of a (graded) $R$-module $H$ is a sequence of (graded) submodules
\[\ldots\subset F^pH\subset F^{p-1}H\subset\ldots\]
We say that the spectral sequence $(E_r,d_r)$ \textit{converges} to a graded module $H^*$ if there is a filtration of $H^*$ such that in any degree $n$ we have
\[0=F^sH^n\subset\ldots\subset F^pH^n\subset F^{p-1}H^n\subset\ldots\subset F^tH^n=H^n\] for some $s,t\in \mathbb{Z}$ and
$E_\infty^{p,q}\cong F^pH^{p+q}/F^{p+1}H^{p+q}$.
\end{defn}
Note that when working with $\mathbb{R}$-coefficients (or over any field) there is a highly non-canonical isomorphism of vector spaces $H^n=\bigoplus_p F^pH^n/ F^{p+1}H^n=\bigoplus_{p+q=n}E_\infty^{p,q}$. In particular $H^*\cong E_\infty$ as graded vector spaces when we consider $E_\infty^{p,q}$ to be of degree $p+q$.

\subsection{Spectral sequence of a filtration}\label{Appendix:ConstrSec}
As hinted at above, the usefulness of spectral sequences stems from the fact that they can be used to break the process of taking cohomology down into several steps. Consider, e.g., the Cartan model $C_G(M)=(S(\mathfrak{g}^*)\otimes \Omega(M))^G$ with its differential $d_G=1\otimes d+\delta$ where $\delta$ is the component which raises the degree in $S(\mathfrak{g}^*)$ and $d$ is just the differential on $\Omega(M)$. Algebraically speaking, $C_G(M)$ is a huge and complicated object, but its cohomology under the differential $1\otimes d$ is much smaller (see Prop. \ref{prop:E1term} below). Consequently, when analysing $H_G(M)$, it can be helpful to take cohomology with respect to $1\otimes d$ first, and then worry about the rest of $d_G$. This process of singling out the $1\otimes d$ component is achieved via a suitable filtration and the associated spectral sequence.

\begin{defn}
A filtration of a cochain complex $(C,d)$ of $R$-modules is a family
\[\ldots\subset F^pC\subset F^{p-1}C\subset\ldots\]
of subcomplexes of $C$. The filtration is said to be \textit{canonically bounded} if $F^0C=C$ and $F^{n+1}C^n=0$.
\end{defn}

\begin{rem}\label{Appendix:Hfiltration}
A filtration of a complex $(C,d)$ induces a filtration $F^*H^*(C,d)$ of $H^*(C,d)$, where $F^pH^n(C,d)$ is the image of the map $H^n(F^pC,d)\rightarrow H^n(C,d)$.
\end{rem}

\begin{thm}\label{Appendix:specseq}
Let $(C,d)$ be a cochain complex and $F^*C$ a canonically bounded filtration. Then the construction below gives rise to a first-quadrant spectral sequence $(E_r,d_r)$ converging to $H^*(C,d)$. More precisely we have \[E^{p,q}_\infty\cong F^{p}H^{p+q}(C,d)/F^{p+1}H^{p+q}(C,d),\] where $F^pH^n(C,d)$ is defined as above.
\end{thm}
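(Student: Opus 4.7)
The plan is to construct $E_r^{p,q}$ explicitly as a quotient of ``approximate cycles modulo approximate boundaries'', verify that the differential $d$ on $C$ induces differentials $d_r$ making the resulting bigraded $R$-modules into a spectral sequence, and finally identify the stable value $E_\infty^{p,q}$ with the associated graded of the induced filtration on $H^*(C,d)$.

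First I would define
\[
Z_r^{p,q} := \{x \in F^p C^{p+q} \mid dx \in F^{p+r} C^{p+q+1}\}
\]
(``elements of filtration degree $p$ that survive $r$ pages'') together with
\[
B_r^{p,q} := Z_{r-1}^{p+1,q-1} + d\bigl(Z_{r-1}^{p-r+1,q+r-2}\bigr),
\]
and set $E_r^{p,q} := Z_r^{p,q}/B_r^{p,q}$. One checks readily that $B_r^{p,q} \subset Z_r^{p,q}$, that the restriction of $d$ sends $Z_r^{p,q}$ into $Z_r^{p+r,q-r+1}$ (because $d^2=0$ automatically places $d(dx)$ in the required filtration degree), and that the resulting map descends to an $R$-linear differential $d_r \colon E_r^{p,q} \to E_r^{p+r,q-r+1}$ of the right bidegree with $d_r \circ d_r = 0$.

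The main technical step, and what I expect to be the most delicate, is the identification $E_{r+1}^{p,q} \cong H(E_r,d_r)^{p,q}$. One shows that the map $\phi \colon Z_{r+1}^{p,q} \to \ker d_r^{p,q} / \operatorname{im} d_r^{p-r,q+r-1}$ given by $x \mapsto [[x]]$ is surjective with kernel exactly $B_{r+1}^{p,q}$. Surjectivity uses the decomposition of $B_r^{p+r,q-r+1}$: given $y\in Z_r^{p,q}$ representing a $d_r$-cycle, write $dy = z + dw$ with $z\in Z_{r-1}^{p+r+1,q-r}$ and $w\in Z_{r-1}^{p+1,q-1}\subset B_r^{p,q}$; then $y-w \in Z_{r+1}^{p,q}$ represents the same class as $y$ in $E_r^{p,q}$. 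The identification of the kernel relies on the monotonicity inclusion $Z_{r-1}^{p-r+1,q+r-2}\subset Z_r^{p-r,q+r-1}$ and on the fact that an element of $Z_{r-1}^{p+1,q-1}$ that differs from a given element of $Z_{r+1}^{p,q}$ by a coboundary must already lie in $Z_r^{p+1,q-1}$. This bookkeeping is the only step with real content; the remainder of the argument is purely formal.

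The first-quadrant property follows immediately from canonical boundedness: $F^0 C = C$ and $F^{n+1}C^n = 0$ force $E_r^{p,q} = 0$ outside $p,q \geq 0$. For the convergence statement, note that whenever $r > q+1$ and $r \geq p+1$, canonical boundedness yields $F^{p+r}C^{p+q+1} = 0$ and $F^{p-r+1}C = C$, so $Z_r^{p,q}$ stabilizes to $F^p C^{p+q} \cap \ker d$ and $B_r^{p,q}$ stabilizes to $(F^{p+1}C^{p+q}\cap \ker d) + (d(C^{p+q-1})\cap F^p C^{p+q})$. To identify this stable quotient with $F^p H^{p+q}(C,d)/F^{p+1}H^{p+q}(C,d)$, I would use that the natural map from $F^p C^{p+q} \cap \ker d$ to $F^p H^{p+q}(C,d)/F^{p+1}H^{p+q}(C,d)$ induced by passage to cohomology is surjective by the definition of the filtration in Remark \ref{Appendix:Hfiltration}, and then verify directly that its kernel coincides with the stable $B_\infty^{p,q}$ just described, yielding the required isomorphism.
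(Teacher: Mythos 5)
Your proposal is correct and follows essentially the same route as the paper: your $Z_r^{p,q}$ and $B_r^{p,q}$ are exactly the paper's $A_r^{p,q}$ and $d(A_{r-1}^{p-r+1,q+r-2})+A_{r-1}^{p+1,q-1}$, and the identification of $E_\infty^{p,q}$ with $F^pH^{p+q}/F^{p+1}H^{p+q}$ via cocycle representatives is the same argument (the paper defers the bookkeeping for $E_{r+1}\cong H(E_r,d_r)$ to Weibel, which you sketch explicitly). No gaps.
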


In the construction we, for the moment, forget about the cohomological degree and focus purely on the filtration degree. The second component of the bidegree will be added in the end. We start by setting
\[E_0^{p}=F^pC/F^{p+1}C.\]
This carries a differential induced by $d$ and $E_0=\bigoplus_p E_0^p$ is known as the associated graded chain complex. Its cohomology $E_1$ is a first approximation of the cohomology of $(C,d)$, where cocycles are represented by elements whose filtration degree increases under the differential. Note that there is a subquotient of $E_0$ that is a much better approximation of the cohomology, namely $E_\infty=\bigoplus_p E_\infty^p$ where
\[E_\infty^p=\frac{\ker d\cap F^pC+F^{p+1}C}{\im d\cap F^pC+F^{p+1}C}.\]
To interpolate between the two we introduce the approximate cycles
\[A_r^p=\{x\in F^pC~|~dx\in F^{p+r}C\}\]
whose filtration degree increases by $r$ under the differential. Now set
\[E_r^p= \frac{A^p_r+F^{p+1}C}{d(A^{p-r+1}_{r-1})+F^{p+1}C} \cong\frac{A_r^{p}}{d\left(A_{r-1}^{p-r+1}\right)+A_{r-1}^{p+1}}.\]
The usefulness of these interpolations stems from the fact that $E_{r+1}$ can be computed from $E_r$: by definition $d$ induces a map $d_r\colon E_r^p\rightarrow E_r^{p+r}$ and one can identify $E_{r+1}$ with $H(E_r,d_r)$ (see \cite[Theorems 5.4.1]{Weibel} for details).

The bigrading in the spectral sequence arises from additionally considering the grading on $C$. We want the latter to correspond to the total degree of the bigrading so we set $A^{p,q}_r=A_r^p\cap C^{p+q}$, which naturally induces a bigrading on $E_r$. Explicitly we have \[E_r^{p,q}=\frac{A_r^{p,q}}{d\left(A_{r-1}^{p-r+1,q+r-2}\right)+A_{r-1}^{p+1,q-1}}.\]
Since $d_r$ raises the total degree by one and the filtration degree by $r$, it is of bidegree $(r,-r+1)$.
To construct the isomorphism $E^{p,q}_\infty\cong F^{p}H^{p+q}(C,d)/F^{p+1}H^{p+q}(C,d)$, note that $d$ vanishes on $A^{p,q}_r$ for $r>q+1$ because the filtration is canonically bounded. Thus $E_r^{p,q}$ is represented by cocycles from $\ker(d)\cap F^pC^{p+q}$. The isomorphism is then defined by just mapping those cocycles onto their image in $F^{p}H^{p+q}(C,d)/F^{p+1}H^{p+q}(C,d)$. For further details like well-definedness of the last map we again refer to \cite[Theorems 5.4.1 and 5.5.1]{Weibel}.


\subsection{The spectral sequence of the Cartan model}\label{Appendix:Section:CartanSpeq}
From now on let $G$ be a compact, connected group acting on a manifold $M$.
Recall from the definitions in Section \ref{sec:CartanModel} that the Cartan model $C_G(M)\subset S(\mathfrak{g}^*)\otimes \Omega^*(M)$ inherits a bigrading via \[\left( S(\mathfrak{g}^*)\otimes \Omega^*(M)\right)^{p,q}= S^\frac{p}{2}(\mathfrak{g}^*)\otimes\Omega^q(M),\] whenever $p$ is even and $C_G^{p,q}(M)=0$ when $p$ is odd. In particular, $S(\mathfrak{g}^*)$ is concentrated in even degrees when considered as the subalgebra $C_G^{*,0}$.
We also assign a total degree via $C_G^n(M)=\bigoplus_{p+q=n}C^{p,q}_G(M)$.
The Cartan differential is $d_G=1\otimes d+\delta$ with $d$ just the regular differential in
$\Omega^*(M)$ and $(\delta\omega)(X)=-i_{\overline{X}}(\omega(X))$. Note that $1\otimes d$ and $\delta$ are themselves differentials of bidegree $(0,1)$ and $(2,-1)$.

\begin{rem}
Doing a suitable degree shift one can achieve that the bidegrees of the differentials are $(0,1)$ and $(1,0)$. With this grading $C_G(M)$ becomes a double complex in the classical sense and the spectral sequence we construct below is (up to degree shifts) the spectral sequence associated to this double complex (c.f.\ \cite{GuilleminSternberg}). As the degree shift will not simplify our presentation of the material and the original bigrading is more in line with the topological conventions, we decide to stick to the original one.
\end{rem}

In what follows we will write $C$ instead of $C_G(M)$. The filtration we consider on $C$ is defined by
\[F^pC:= C^{\geq p,*}=\bigoplus_{l\geq p,q\geq 0}C^{l,q}.\]
It is canonically bounded as \[F^pC^n=\bigoplus_{l=p}^nC^{l,n-l}.\]
The differential $d_G$ restricts to the $F^pC$, so this is indeed a filtration by subcomplexes and we have an associated spectral sequence to which we just refer as the spectral sequence of $C$. Let us now explicitly compute the first pages.

We have $E_0^{p,q}=F^pC^{p+q}/F^{p+1}C^{p+q}$, which is canonically isomorphic to $C^{p,q}$ via the projection onto this summand.
The differential $d_0\colon E^{p,q}_0\rightarrow E^{p,q+1}_0$ is just the one induced by $d_G$ on the quotient. The composition with the isomorphisms
\[C^{p,q}\cong F^pC^{p+q}/F^{p+1}C^{p+q}\xrightarrow{d_G} F^pC^{p+q+1}/F^{p+1}C^{p+q+1}\cong C^{p,q+1}\]
is precisely the its bidegree $(0,1)$ component $1\otimes d$. Thus we see that $(E_0,d_0)$ is isomorphic to $(C,1\otimes d)$ as a cochain complex.

\begin{rem}\label{Appendix:rem:multiplicativebla}
The following observation will become relevant when discussing multiplicative aspects in Section \ref{Appendix:Multstrucsec}.
In fact the above isomorphism $(C,1\otimes d)\cong (E_0,d_0)$ is one of commutative differential graded algebras (cdga, see Section \ref{Appendix:Multstrucsec}) with respect to the product \[F^pC/F^{p+1}C\otimes F^qC/F^{q+1}C\rightarrow F^{p+q}C/F^{p+q+1}C\] on $E_0$ which is induced by multiplication in $C$. The cohomology of a cdga is naturally a commutative graded algebra. Morphisms between cdgas, i.e.\ multiplicative maps that respect the grading and commute with the differential, induce multiplicative maps in cohomology. The isomorphism in the following proposition is of this form and hence respects the algebra structure.
\end{rem}

\begin{prop}\label{prop:E1term} If $G$ is a compact, connected  Lie group acting on a compact differentiable manifold, then the $E_1$-term in the spectral sequence associated to the Cartan complex is
\[
E_1 \cong S(\mfg^*)^G\otimes H^{*}(M).
\]
\end{prop}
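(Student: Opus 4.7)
The plan is to use the construction of the spectral sequence from Section \ref{Appendix:ConstrSec}. By definition, $E_1 = H^*(E_0,d_0)$, and we observed in Section \ref{Appendix:Section:CartanSpeq} that $(E_0,d_0)$ is canonically isomorphic to the complex $(C_G(M), 1\otimes d) = ((S(\mfg^*)\otimes \Omega(M))^G, 1\otimes d)$. Thus the task reduces to computing the cohomology of this last complex.

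First I would establish that taking $G$-invariants commutes with taking cohomology on $G$-equivariant complexes. Since $G$ is compact, the averaging operator (as used in Theorem \ref{thm:cohomofinvariantforms}) provides a $G$-equivariant projection $S(\mfg^*)\otimes \Omega(M) \to (S(\mfg^*)\otimes \Omega(M))^G$ that splits the inclusion. This makes the functor $V\mapsto V^G$ exact on $G$-representations, and as this functor commutes with the $G$-equivariant differential $1\otimes d$, it passes to cohomology:
\[
H^*\bigl((S(\mfg^*)\otimes \Omega(M))^G,\, 1\otimes d\bigr) \;\cong\; H^*\bigl(S(\mfg^*)\otimes \Omega(M),\, 1\otimes d\bigr)^G.
\]

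Next I would compute the cohomology of the un-averaged complex. Since $1\otimes d$ acts trivially on the $S(\mfg^*)$-factor and as the de Rham differential on the $\Omega(M)$-factor, and since tensoring with the free (hence flat) $\RR$-module $S(\mfg^*)$ commutes with taking cohomology degree by degree, we obtain
\[
H^*\bigl(S(\mfg^*)\otimes \Omega(M),\, 1\otimes d\bigr) \;\cong\; S(\mfg^*)\otimes H^*(M).
\]

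Finally, I would take $G$-invariants of this expression. Because $G$ is connected, any $g\in G$ is joined to $e$ by a smooth path, hence the diffeomorphism induced by $g$ is smoothly homotopic to $\id_M$; the homotopy axiom for de Rham cohomology then implies that $G$ acts trivially on $H^*(M)$. Consequently $(S(\mfg^*)\otimes H^*(M))^G = S(\mfg^*)^G\otimes H^*(M)$, yielding the desired identification of $E_1$. The main thing to get right is the exactness of the $G$-invariants functor, which is where the compactness of $G$ enters; everything else is a formal consequence of the bigraded structure of $C_G(M)$ and of $G$ being connected.
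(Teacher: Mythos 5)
Your proposal is correct. It uses the same two essential ingredients as the paper's proof --- averaging over the compact group, and the fact that connectedness of $G$ forces the induced action on $H^*(M)$ to be trivial --- but it organizes them differently. The paper fixes the comparison map induced by the inclusion $(C_G(M),1\otimes d)\hookrightarrow (S(\mfg^*)\otimes\Omega(M),1\otimes d)$ and checks by hand that it is injective (averaging a primitive), lands in the invariants (connectedness), and is surjective; for the last step it precomposes with $S(\mfg^*)^G\otimes\Omega(M)^G\hookrightarrow C_G(M)$ and invokes \'E.~Cartan's Theorem~\ref{thm:cohomofinvariantforms}. You instead package the averaging trick once and for all as the statement that $V\mapsto V^G$ is an exact functor, which immediately gives $H^*\bigl(C_G(M),1\otimes d\bigr)\cong\bigl(S(\mfg^*)\otimes H^*(M)\bigr)^G$, and then finish with the connectedness argument. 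This buys you a more self-contained argument: you do not need Theorem~\ref{thm:cohomofinvariantforms} as an input (indeed, specializing your first two steps to the factor $\Omega(M)$ reproves it), whereas the paper leans on it for surjectivity. The one point worth making explicit in your write-up is why the averaging integral is defined and yields a continuous projection on $S(\mfg^*)\otimes\Omega(M)$ --- this works because each graded piece is a finite sum of copies of the Fr\'echet space $\Omega(M)$ with a smooth $G$-action --- and that exactness of $(-)^G$ really does commute with passing to $\ker d/\im d$; both are routine, but they are where the compactness hypothesis is actually consumed.
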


\begin{proof}
We just need to compute the cohomology of $(E_0,d_0)$. Consider the inclusion of complexes
\begin{equation}\label{eq:a8}
(C,1\otimes d) = ((S(\mfg^*)\otimes \Omega(M))^G,1\otimes d) \longrightarrow (S(\mfg^*)\otimes \Omega(M),1\otimes d).
\end{equation}
With regards to Remark \ref{Appendix:rem:multiplicativebla} note that it is an inclusion of cdgas. We obtain the induced map on cohomology
\[
i\colon H^*(C,1\otimes d) \longrightarrow S(\mfg^*)\otimes H^*(M).
\]
Let us show first that it is injective. Assume that $\omega \in C$ is such that $\omega=(1\otimes d)(\sigma)$ for some $\sigma\in S(\mfg^*)\otimes \Omega(M)$.  As $\omega$ is $G$-invariant and $1\otimes d$ commutes with the diagonal $G$-action on $S(\mfg^*)\otimes \Omega(M)$, we have $(1\otimes d)(g^*\sigma)=\omega$ for all $g\in G$. But then also
\[
(1\otimes d)\left(\int_G g^*\sigma\, dg\right)= \int_G(1\otimes d)g^*\sigma\, dg = \int_G \omega\, dg = \omega.
\]
Because $\int_G g^*\sigma\, dg\in C$, it follows that $[\omega]=0\in H^*(C,1\otimes d)$.

We next claim that the map $i$ takes values in $S(\mfg^*)^G\otimes H^*(M)$, which means that for every $[\omega]$ on the left hand side, the element $i[\omega]$ is $G$-invariant when considered as a polynomial function with values in $H^*(M)$. For $g\in G$ the diffeomorphism $g^{-1}\colon M\to M$ is homotopic to the identity, because $G$ is connected. Then, for any $X\in \mfg$ we have $[\omega(\Ad_gX)] = [(g^{-1})^*\omega(X)] = [\omega(X)]$.

Finally we show that $i:H^*(C,1\otimes d)\to S(\mfg^*)^G\otimes H^*(M)$ is surjective. For this we precompose \eqref{eq:a8} with the inclusion
\[
(S(\mfg^*)^G\otimes \Omega(M)^G,1\otimes d) \longrightarrow (C,1\otimes d).
\]
In cohomology we obtain the composition
\[
S(\mfg^*)^G\otimes H^*(\Omega(M)^G,d)\longrightarrow H^*(C,1\otimes d)\overset{i}\longrightarrow S(\mfg^*)^G\otimes H^*(M)
\]
which, by Theorem \ref{thm:cohomofinvariantforms}, is an isomorphism. Thus $i$ is surjective.
\end{proof}

\begin{rem}
Note that the proof is simpler in case of a torus action: in this case the coadjoint action on $S(\mft^*)$ is trivial, so the isomorphism $E_1 = S(\mft^*)\otimes H^{*}(M)$ follows directly from Theorem \ref{thm:cohomofinvariantforms}.
\end{rem}

\begin{cor}\label{thm:hoddcollapse}
If the cohomology of $M$ is concentrated in even degrees, i.e., $H^n(M)=0$ whenever $n$ is odd, then the spectral sequence of the Cartan model degenerates at the $E_1$-term.
\end{cor}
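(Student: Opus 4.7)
My plan is to leverage Proposition \ref{prop:E1term}, which identifies $E_1 \cong S(\mathfrak{g}^*)^G \otimes H^*(M)$, and then run a parity-of-degrees argument to show every higher differential vanishes.

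First I would record the key parity observation about the bigrading on the Cartan complex. By construction, $C_G^{p,q}(M) = S^{p/2}(\mathfrak{g}^*) \otimes \Omega^q(M)$ when $p$ is even and $C_G^{p,q}(M) = 0$ when $p$ is odd. Consequently $E_0^{p,q}$, and therefore also $E_1^{p,q}$ (as a subquotient), vanishes unless $p$ is even. Combining this with the hypothesis $H^{\mathrm{odd}}(M) = 0$ together with Proposition \ref{prop:E1term} gives
\[
E_1^{p,q} \neq 0 \quad \Longrightarrow \quad p \text{ and } q \text{ are both even}.
\]
Since every later page $E_r$ is a subquotient of $E_1$, the same parity restriction persists on $E_r$ for all $r \geq 1$.

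Next I would check that this parity pattern is incompatible with the bidegree $(r, -r+1)$ of the differential $d_r$ for every $r \geq 1$. If $E_r^{p,q}$ is nonzero, then $p$ and $q$ are even. The target $E_r^{p+r, q-r+1}$ can be nonzero only if both $p+r$ and $q-r+1$ are even. The first condition forces $r$ to be even, while the second forces $r$ to be odd, a contradiction. Hence $d_r = 0$ for every $r \geq 1$, which is exactly the statement that the spectral sequence collapses at the $E_1$-term.

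I do not anticipate a serious obstacle here: the statement reduces to a purely combinatorial parity check once Proposition \ref{prop:E1term} is in hand, together with the basic fact that $S(\mathfrak{g}^*)$ contributes only even $p$-degree to the Cartan bigrading. The mildest subtlety worth writing out carefully is that the parity vanishing on $E_1$ really does descend to each $E_r$, which follows immediately from the subquotient relation $E_{r+1} \cong \ker d_r / \operatorname{im} d_{r-r+1}$ at each bidegree.
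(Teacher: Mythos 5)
Your proposal is correct and follows essentially the same route as the paper: both use Proposition \ref{prop:E1term} together with the fact that $S(\mfg^*)^G$ sits in even filtration degree to conclude that $E_1^{p,q}$ vanishes unless $p$ and $q$ are both even, whence every $d_r$ vanishes for parity reasons. (Your final formula for the subquotient has a typo --- the denominator should be the image of $d_r^{p-r,q+r-1}$ --- but this does not affect the argument.)
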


\begin{proof}
Under the hypothesis we know that $E_1^{p,q}$ vanishes whenever $p$ or $q$ is odd. Thus $d_1$ vanishes for degree reasons. The same argument applies to all subsequent pages.
\end{proof}

\begin{rem}\label{rem:doddvanishes}
The differential $d_r$ on $E_r$ vanishes whenever $r\geq 1$ is odd, because $S(\mathfrak{g}^*)^G$ is concentrated in even degrees. In particular, the spectral sequence collapses at $E_1$ if and only if it collapses at $E_2$.
\end{rem}

\begin{ex}
Consider the diagonal action of $S^1\subset\mathbb{C}$ on the unit sphere $S^{2n+1}\subset \mathbb{C}^{n+1}$. The Weyl-invariant polynomials are just $\mathbb{R}[u]$, where $u$ is the dual of some generator $X$ of the Lie algebra of $S^1$. The $E_1$ term of the spectral sequence is isomorphic to $\mathbb{R}[u]\otimes H^*(S^{2n+1})$, so it consists just of two copies of $\mathbb{R}[u]$, embedded as $E_1^{*,0}$ and $E_1^{*,2n+1}$. A differential can only be nonzero if it maps from the $(2n+1)^{\mathrm{st}}$ row to the $0^{\mathrm{th}}$ row. Consequently we have $d_r=0$ for $1\leq r\leq 2n+1$ and $E_1\cong E_{2n+2}$. By the same reasoning we have $d_r=0$ for $r\geq 2n+3$ and $E_{2n+3}=E_\infty$. All that remains to understand is what the differential $d_{2n+2}$ does on $E_{2n+2}$:

\begin{center}
\begin{tikzpicture}
  \matrix (m) [matrix of math nodes,
    nodes in empty cells,column sep={0.8cm,between origins},
    row sep={1cm,between origins}]{
           2n+1 &[+2mm]  \mathbb{R}  &  0    &  \mathbb{R}   &   \cdots  & & & &\\
          \vdots     &   &  &  & & & & &\\
          0     &  \mathbb{R} & 0 & \mathbb{R} & \cdots &  \mathbb{R} & 0 & \mathbb{R} &\cdots
          \\[-3mm]
     &   0  &  1  &  2  &  &  2n+2 &\\};
     
     \draw[->,thick] (m-1-2) to (m-3-6) node[midway, above=3mm,right=-3mm] {$d_{2n+2}$};
     \draw[->,thick] (m-1-4) to (m-3-8);
    
   \draw[very thick] (m-1-2)++(-0.4,0.4)--++(0,-3.4);
   \draw[very thick] (m-3-1)++(-0.5,-0.4)--++(7.5,0);
    \end{tikzpicture}
    \end{center}

Often spectral sequence arguments can work entirely without knowing the explicit definition of the differentials if one adds an extra ingredient. In this case for example, we know by Theorem \ref{thm:eqcohomlocallyfreeactions} that $E_\infty$ is the cohomology of a $2n$-dimensional manifold and vanishes in degrees above $2n$. This knowledge implies that no elements of greater (total) degree  must survive the transition from $E_{2n+2}$ to $E_{2n+3}$. Consequently $d_{2n+2}\colon \smash{E_{2n+2}^{p,2n+1}}\rightarrow \smash{E^{p+2n+2,0}_{2n+2}}$ has to be an isomorphism for every $p\geq 0$. All that remains on the page $E_{2n+3}=E_\infty$ is therefore $\mathbb{R}[u]/(u^{n+1})$ in the $0$th row. We have shown that $H^*(\mathbb{C}P^n)\cong H_{S^1}(S^{2n+1})\cong\mathbb{R}[u]/(u^{n+1})$ as graded vector spaces. With the help of the discussion of the $\mathbb{R}[u]$-module and algebra structures from the subsequent sections, one can deduce that this isomorphism is actually one of $\mathbb{R}[u]$-algebras. However, this is false in general and only holds because in the example, $E_\infty$ is concentrated in a single row, implying there is only one step in the filtration of $H_{S^1}(S^{2n+1})$.

Finally, let us examine explicitly the generator of $\smash{E_{2n+2}^{0,2n+1}}\cong H^{2n+1}(S^{2n+1})$. Let $\omega_0$ be a $S^1$-invariant volume form on $S^{2n+1}$. Other than suggested by the isomorphism, $\omega_0$ does not represent a generator of $\smash{E_{2n+2}^{0,2n+1}}$ because $d_{S^1}\omega_0=ui_{\overline{X}}\omega_0$ has filtration degree $2$. So $\omega_0$ is not an element of $\smash{A^{0,2n+1}_{2n+2}}$. However, we find a form $\omega_1$ such that $i_{\overline{X}}(\omega_0)=d\omega_1$ because $H^{2n}(S^{2n+1})=0$. Now $d_G(\omega_0+u\omega_1)=u^2i_{\overline{X}}\omega_1$ lies in filtration degree $4$. Inductively we construct a zigzag $\omega=\omega_0+\cdots+u^n\omega_n$ such that $d_G\omega$ is a multiple of $u^{n+1}$. So $\omega$ lies in $\smash{A^{0,2n+1}_{2n+2}}$ and induces an element of $\smash{E_{2n+2}^{0,2n+1}}$. Using the fact that the bidegree-$(0,2n+1)$ component of $\omega$, which is precisely $\omega_0$, does not lie in the the projection $\im d$ of $\im d_G$ to the $(0,2n+1)$ component, we conclude that $\omega$ descends to a generator.
\end{ex}

\subsection{Multiplicative structure}\label{Appendix:Multstrucsec}

\begin{defn}
A \textit{graded} $R$-\textit{algebra} is an $R$-algebra $A=\bigoplus_{k\in \mathbb{Z}} A^k$ (where $A^k$ are $R$-modules) such that the multiplication map respects the grading, i.e., $A^p\cdot A^q\subset A^{p+q}$. It is called \textit{commutative} if $xy=(-1)^{|x||y|}yx$ for homogeneous elements $x,y$ of degrees $|x|,|y|$. If $d\colon A\rightarrow A$ is an $R$-linear map which raises the degree by $1$ and satisfies $d^2=0$ as well as the graded Leibniz rule
\[d(xy)=dx\cdot y+(-1)^{|x|}x\cdot dy,\]
we call $(A,d)$ a \textit{commutative differential graded algebra} (cdga). A filtration $F^*A$ of $A$ (as a graded $R$-module) is called \textit{multiplicative} if $F^pA\cdot F^lA\subset F^{p+l}A$.
\end{defn}

\begin{rem}\label{Appendix:Multfiltration} The cohomology $H^*(A,d)$ of any cdga $(A,d)$ inherits an algebra structure which turns it into a commutative graded algebra.
If $F^*A$ is a multiplicative filtration of $(A,d)$ by subcomplexes, then the induced filtration on $H^*(A,d)$ (see Remark \ref{Appendix:Hfiltration}) is multiplicative with respect to the induced algebra structure. In this case we have well defined product maps
\[\frac{F^pH^n}{F^{p+1}H^n}\otimes \frac{F^lH^m}{F^{l+1}H^m}\longrightarrow \frac{F^{p+l}H^{n+m}}{F^{p+l+1}H^{n+m}},\]
where we write $H^k$ for $H^k(A,d)$.
\end{rem}

\begin{ex}
The differential forms $(\Omega(M),d)$ and the Cartan model $(C_G(M),d_G)$ are cdgas with the total degree which is the sum of both components of the bidegree. The filtration of the Cartan model as defined in the previous section is a multiplicative filtration.
\end{ex}

We have seen that for a suitably filtered complex $(C,d)$ the last page of the associated spectral sequence carries information on $H^*(C,d)$ and the two are even abstractly isomorphic as vector spaces if we use field coefficients. It is natural to ask if in case of a cdga $(A,d)$, $E_\infty$ carries information on the algebra structure on $H^*(A,d)$. While we cannot expect to have $E_\infty\cong H^*(A,d)$ as algebras, the algebra structure does indeed leave its mark on $E_\infty$ in the following manner.

\begin{thm}\label{Appendix:Thm:MultSpecsec}
Let $(A,d)$ be a cdga with a canonically bounded multiplicative filtration $F^*A$. Then the spectral sequence from Theorem \ref{Appendix:specseq} carries a multiplicative structure, i.e., for any $r$ there exist multiplication maps $\mu_r\colon  E^{p,q}_r\otimes E^{s,t}_r\rightarrow E_r^{p+s,q+t}$ with the following properties:\begin{itemize}
\item $(E_r,d_r)$ is a cdga with respect to the total degree of the bigrading.
\item The multiplication $\mu_{r+1}$ is induced by $\mu_r$ under the isomorphism $E_{r+1}\cong H(E_{r},d_{r})$.
\end{itemize}
In particular we get an induced multiplication on $E_\infty$. Under the isomorphism
\[E_\infty^{p,q}= F^pH^{p+q}(A,d)/F^{p+1}H^{p+q}(A,d),\]
this product coincides with the one described in Remark \ref{Appendix:Multfiltration}.
\end{thm}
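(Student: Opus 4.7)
My plan is to build the product structure by lifting multiplication from the filtered cdga $(A,d)$ up to the approximate cocycles $A_r^{p,q}$ and then checking that it descends to the relevant quotients defining $E_r^{p,q}$. Throughout, the three inputs are the Leibniz rule on $A$, the multiplicativity of the filtration $F^pA \cdot F^sA \subseteq F^{p+s}A$, and the nesting $A_r^p \subseteq A_{r-1}^p$.

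The first step is to verify that if $x \in A_r^{p,q}$ and $y \in A_r^{s,t}$, then $xy \in A_r^{p+s,q+t}$. The total degree works by construction, the filtration degree works because $F^pA \cdot F^sA \subseteq F^{p+s}A$, and the condition $d(xy) = dx\cdot y + (-1)^{|x|} x\cdot dy \in F^{p+s+r}A$ follows from $dx \in F^{p+r}A$ and $dy \in F^{s+r}A$ together with the filtration multiplicativity. This gives a bilinear map on the numerators; to descend it to $E_r$ one must show that the ideal generated by $d(A_{r-1}^{p-r+1,q+r-2}) + A_{r-1}^{p+1,q-1}$ is absorbed. For a factor $x' \in A_{r-1}^{p+1}$ one uses that $x' \cdot y$ lives in $F^{p+s+1}A$ with differential in $F^{p+s+r}A$, i.e., in $A_{r-1}^{p+s+1}$. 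For a factor of the form $dz$ with $z \in A_{r-1}^{p-r+1}$, the Leibniz rule rewrites $dz \cdot y = d(zy) \mp z\cdot dy$, and a direct degree count shows $zy \in A_{r-1}^{p+s-r+1}$ and $z\cdot dy \in A_{r-1}^{p+s+1}$. The analogous computations on the right-hand factor complete the check, so $\mu_r$ is well defined.

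Next I would verify the cdga structure on $(E_r,d_r)$. Graded commutativity and associativity are inherited directly from $(A,d)$ since $\mu_r$ is induced by multiplication in $A$. The Leibniz rule $d_r(xy) = d_r(x)\cdot y + (-1)^{|x|} x\cdot d_r(y)$ follows because $d_r$ is itself induced by $d$ on representatives, and the Leibniz identity, being of the correct bidegree, passes to the quotient. To compare $\mu_{r+1}$ with the product induced on cohomology $H(E_r,d_r)$, I would note that an element of $E_{r+1}^{p,q}$ is represented by some $x \in A_{r+1}^{p,q} \subseteq A_r^{p,q}$; the product in $H(E_r,d_r)$ is computed by multiplying representatives in $A$ and then passing to $E_r$, while the product $\mu_{r+1}$ is defined by multiplying in $A$ and passing to $E_{r+1}$. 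The commutative square expressing this compatibility is immediate from the inclusion $A_{r+1} \subseteq A_r$ and the fact that both products come from the same multiplication in $A$.

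Finally, for the statement on $E_\infty$: since the filtration is canonically bounded, each $E_\infty^{p,q}$ is represented by elements of $\ker d \cap F^p A^{p+q}$, and the isomorphism $E_\infty^{p,q} \cong F^p H^{p+q}/F^{p+1} H^{p+q}$ simply sends such a representative to its cohomology class. The product $\mu_\infty$ and the product from Remark \ref{Appendix:Multfiltration} are both induced from multiplication in $A$ on such representatives, so they coincide. The main bookkeeping obstacle is step one — the case analysis showing that $\mu_r$ is well defined on the quotient — since here one must track three separate filtration estimates per factor and ensure each lands inside the correct denominator $A_{r-1}$, rather than merely $A_r$. Once this is done, every remaining claim is a formality derived from the Leibniz rule in $A$.
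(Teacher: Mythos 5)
Your proposal is correct and follows exactly the route the paper takes: defining $\mu_r$ by restricting the multiplication of $A$ to the approximate cocycles $A_r^{p,q}$ and checking that it descends to the quotients $E_r^{p,q}$; the paper merely states this and defers the bookkeeping to McCleary, whereas you carry out the filtration estimates (all of which check out, including the two cases for the denominator and the identification on $E_\infty$ via representatives in $\ker d\cap F^pA^{p+q}$). No gaps.
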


Details of the proof are given e.g.\ in \cite[Section 2.3]{McCleary}. Let us just quickly demystify the products $\mu_r$ by giving their definition: in the explicit construction of $E_r^{p,q}$ from Section \ref{Appendix:ConstrSec} one easily checks that multiplication in $A$ restricts to $A_r^{p,q}\otimes A_r^{s,t}\rightarrow A_r^{p+s,q+t}$ and that this descends to quotients inducing the map $\mu_r\colon  E_r^{p,q}\otimes E_r^{s,t}\rightarrow E_r^{p+s,q+t}$ from the above theorem. Finally we want to draw the reader's attention to Remark \ref{Appendix:rem:multiplicativebla}, where we argue that
\[E_1\cong S(\mathfrak{g^*})^G\otimes H^*(M)\]
as algebras.

\subsection{On the module structure of the equivariant cohomology}\label{Appendix:Section:ModuleStructure}
One of the interesting features of equivariant cohomology is that it is not only an algebra over $\mathbb{R}$ but over $S(\mathfrak{g}^*)^G$. As we have seen, multiplicative structures carry over to the spectral sequence, so we can use the latter to analyse the $S(\mathfrak{g}^*)^G$-module structure on $H^*_G(M)$.

As the differential $d_G$ of the Cartan model vanishes on $S(\mathfrak{g}^*)^G\otimes 1$, we have $S^p(\mathfrak{g}^*)^G \subset A^{2p,0}_r$ for all $r$. The degreewise projection onto $E^{2p,0}_r$ yields a map
\[S(\mathfrak{g}^*)^G\rightarrow E_r\]
whose image is the zeroth row $E_r^{*,0}$. On the page $E_1\cong S(\mathfrak{g}^*)^G\otimes H^*(M)$ (see Prop.\ \ref{prop:E1term}) it is just the inclusion of $S(\mathfrak{g}^*)^G\otimes \RR$. Note that we also obtain an induced map $S(\mathfrak{g}^*)^G\rightarrow E_\infty$. These maps are easily checked to be morphisms of algebras. Thus, the $E_r$ carry the structure of $S(\mathfrak{g}^*)^G$-modules.

For degree reasons the differentials $d_r$ vanish on $E_r^{*,0}$ for $r\geq 1$ so by the Leibniz rule we have $d_r(fx)=fd_r(x)$ for any $f\in S(\mathfrak{g}^*)^G,x\in E_r$. The module structure on $E_{r+1}$ is just the one that $H(E_r,d_r)$ inherits from the differential graded $S(\mathfrak{g}^*)^G$-module $(E_r,d_r)$.

\begin{lemma}\label{Appendix:Lem:modulegenerators}
Let $x_1,\ldots,x_k\in E_\infty$ be homogeneous elements (with respect to the bigrading) that generate $E_\infty$ as an $S(\mathfrak{g}^*)^G$-module. Choose representatives $y_1,\ldots,y_k\in H^*_G(M)$ via the isomorphisms
\[E_\infty^{p,q}\cong F^pH^{p+q}_G(M)/F^{p+1}H^{p+q}_G(M).\]
Then the $y_i$ generate $H^*_G(M)$ as an $S(\mathfrak{g}^*)^G$-module.
\end{lemma}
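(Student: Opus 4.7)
The strategy is descending induction on the filtration degree. Since the induced filtration $F^*H^*_G(M)$ (Remark \ref{Appendix:Hfiltration}) on the equivariant cohomology is canonically bounded—more precisely, $F^0H^n_G(M)=H^n_G(M)$ and $F^{p}H^n_G(M)=0$ once $p>n$ since $F^{n+1}C_G(M)^n=0$—the induction starts trivially at $p=n+1$ and descends to $p=0$. Without loss of generality the element $z\in H^n_G(M)$ to be expressed as a combination of the $y_i$ is homogeneous.

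For the inductive step, suppose $z\in F^pH^n_G(M)$. Its class $[z]$ in
\[
F^pH^n_G(M)/F^{p+1}H^n_G(M)\;\cong\; E_\infty^{p,n-p}
\]
can, by hypothesis, be written as $[z]=\sum_j f_j\cdot x_{i_j}$ with $f_j\in S(\mfg^*)^G$ homogeneous. Bidegree matching (bearing in mind that $f_j$ sits in bidegree $(2\deg f_j,0)$) forces $x_{i_j}$ to have bidegree $(p-2\deg f_j,\,n-p)$, so each chosen representative $y_{i_j}$ lies in $F^{p-2\deg f_j}H^{n-2\deg f_j}_G(M)$. Because $f_j\in S(\mfg^*)^G$ is itself a $d_G$-cocycle supported in filtration degree $2\deg f_j$, it represents a class in $F^{2\deg f_j}H^{2\deg f_j}_G(M)$, and multiplicativity of the filtration (Remark \ref{Appendix:Multfiltration}) places $f_j\cdot y_{i_j}$ inside $F^pH^n_G(M)$. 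Consequently $w:=z-\sum_j f_j\cdot y_{i_j}\in F^pH^n_G(M)$ as well.

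The key point is now that the projection $F^pH^n_G(M)\twoheadrightarrow E_\infty^{p,n-p}$ sends $w$ to zero. This is where Theorem \ref{Appendix:Thm:MultSpecsec} enters: the product on $E_\infty$ coincides with the product induced from $H^*_G(M)$ on the associated graded, so the projection is multiplicative and, in particular, $S(\mfg^*)^G$-linear—where the module structure on $E_\infty$ is the one coming from the inclusion $S(\mfg^*)^G\hookrightarrow E_r^{*,0}$ used to formulate the hypothesis. Thus $w$ projects to $[z]-\sum_j f_j\cdot x_{i_j}=0$, so $w\in F^{p+1}H^n_G(M)$ and the induction hypothesis expresses $w$, and hence $z$, as an $S(\mfg^*)^G$-linear combination of the $y_i$.

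The only genuine obstacle is the compatibility just invoked: one must verify that the $S(\mfg^*)^G$-module structure on the associated graded $\bigoplus_{p,q} F^pH^{p+q}_G(M)/F^{p+1}H^{p+q}_G(M)$ agrees, under the isomorphism with $E_\infty$, with the module structure on the latter coming from the multiplicative spectral sequence. This agreement is a formal consequence of the multiplicative filtration theorem, but deserves to be pointed out explicitly because a generating set of $E_\infty$ need not lift to a generating set unless the two actions genuinely correspond.
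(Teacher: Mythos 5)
Your proposal is correct and follows essentially the same route as the paper's proof: write the class of $z$ in $E_\infty^{p,n-p}$ as an $S(\mfg^*)^G$-combination of the $x_i$ with bidegree-matched homogeneous coefficients, lift using the $y_i$, invoke the compatibility of the product on $E_\infty$ with the associated graded of $H^*_G(M)$ (Theorem \ref{Appendix:Thm:MultSpecsec}) to see the difference lands in $F^{p+1}$, and iterate until the bounded filtration terminates. The paper phrases the iteration as successively constructing $c_1, c_2, \ldots$ rather than as descending induction, but the argument is the same.
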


\begin{proof}
Let $c_0\in H^l_G(M)$ be any element. It is contained in some $F^pH_G^l(M)$, so we may consider its image $\overline{c_0}\in E_\infty^{p,l-p}$. We find elements $f_1,\ldots,f_k\in S(\mathfrak{g}^*)^G$ such that
\[\overline{c_0}=\sum f_ix_i.\]
Recall that the multiplication in $E_\infty$ respects the bigrading. We may therefore choose the $f_i$ in such a way that they are homogeneous and if $x_i\in E_\infty^{p-m,l-p}$, we have $|f_i|=m$ (in the grading inherited from the Cartan model) or $f_i=0$. This ensures that $\sum_i f_iy_i$ lies in $F^pH^l_G(M)$.
Now by the description of the multiplicative structure on $E_\infty$ from Theorem \ref{Appendix:Thm:MultSpecsec} one verifies that $\sum_i f_iy_i$ projects to $\overline{c_0}$ in $E_\infty^{p,l-p}$. In particular
\[c_1=c_0-\sum f_iy_i\]
projects to $0$ and thus lies in $F^{p+1}H^l_G(M)$. Now we repeat this process for $c_1$ until eventually $c_{l-p+1}\in F^{l+1}H^l_G(M)=0$. We have written $c_0$ as a linear combination of the $y_i$.
\end{proof}

The following proposition applies in particular to compact manifolds. The proof is taken from \cite[Prop.\ 3.10.1]{AlldayPuppe}

\begin{prop}
If $\dim H^*(M)<\infty$, then $H^*_G(M)$ is finitely generated as an $S(\mathfrak{g}^*)^G$-module.
\end{prop}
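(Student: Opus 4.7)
The strategy is to prove that the terminal page $E_\infty$ of the Cartan spectral sequence is finitely generated as a module over $R := S(\mathfrak{g}^*)^G$, and then invoke Lemma \ref{Appendix:Lem:modulegenerators} to lift homogeneous generators of $E_\infty$ to a finite generating set of $H^*_G(M)$.

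First, I would observe that $R$ is a Noetherian ring: it is a finitely generated commutative $\mathbb{R}$-algebra (for $G$ connected this follows from Chevalley's restriction theorem combined with the Chevalley-Shephard-Todd theorem discussed in Section \ref{sec:Coadjoint}; in general it follows from Hilbert's finiteness theorem in invariant theory), hence Noetherian by Hilbert's basis theorem. By Proposition \ref{prop:E1term}, $E_1 \cong R \otimes H^*(M)$ is therefore a free $R$-module of finite rank under our hypothesis $\dim H^*(M) < \infty$, in particular Noetherian. As explained in Section \ref{Appendix:Section:ModuleStructure} the differentials $d_r$ are $R$-linear, so each subsequent page $E_{r+1} = H(E_r,d_r)$ is a subquotient of $E_r$; inductively, every $E_r$ is a finitely generated (even Noetherian) $R$-module.

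Next I would exploit that the spectral sequence is vertically bounded. Set $D := \max\{n : H^n(M) \neq 0\}$, which is finite by the hypothesis $\dim H^*(M) < \infty$. Since $E_1^{p,q} \cong R^p \otimes H^q(M)$ vanishes whenever $q > D$, the same vanishing persists on every subsequent page. As $d_r$ has bidegree $(r,-r+1)$, it automatically sends any $E_r^{p,q}$ with $r > q+1$ to a row of negative index, hence to zero. Combined with the bound $q \leq D$, this gives $d_r \equiv 0$ on all of $E_r$ as soon as $r \geq D+2$, so $E_{D+2} = E_\infty$ as bigraded $R$-modules. By the previous paragraph $E_{D+2}$, and therefore $E_\infty$, is finitely generated. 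Lemma \ref{Appendix:Lem:modulegenerators} then furnishes the desired finite generating set of $H^*_G(M)$.

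The main subtlety is the passage from the pointwise stabilization $E_r^{p,q} = E_\infty^{p,q}$, which is automatic for first-quadrant spectral sequences and only requires $r$ large depending on $(p,q)$, to a \emph{uniform} stabilization $E_r = E_\infty$ at a single finite stage. This is where the hypothesis $\dim H^*(M) < \infty$ enters a second and essential time: it bounds the number of nonzero rows and forces all higher differentials to vanish simultaneously. Without such a vertical bound, the Noetherianness of each $E_r$ would not by itself guarantee that $E_\infty$ is finitely generated, since generators of the subquotients $E_r$ could in principle drift to ever higher bidegrees as $r$ grows.
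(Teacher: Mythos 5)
Your proposal is correct and follows essentially the same route as the paper: reduce to finite generation of $E_\infty$ via Lemma \ref{Appendix:Lem:modulegenerators}, note that $E_1\cong S(\mfg^*)^G\otimes H^*(M)$ is finitely generated over the Noetherian ring $S(\mfg^*)^G$ so that each subsequent page is a finitely generated subquotient, and observe that the spectral sequence collapses after finitely many pages. The only cosmetic difference is that you bound the nonzero rows by $\max\{n: H^n(M)\neq 0\}$ where the paper uses $\dim M$; both yield the needed uniform collapse.
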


\begin{proof}
By Lemma \ref{Appendix:Lem:modulegenerators}, it suffices to show that $E_\infty$ is finitely generated. We have seen that $E_1$ is the free module $S(\mathfrak{g}^*)^G\otimes H^*(M)$. The cohomology $H^*(M)$ is finite-dimensional and in particular $E_1$ is finitely generated as an $S(\mathfrak{g}^*)^G$-module. The ring $S(\mathfrak{g}^*)^G$ is is a polynomial ring (see Section \ref{sec:Coadjoint}). In particular it is Noetherian, which implies that submodules and quotients of finitely generated $S(\mathfrak{g}^*)^G$-modules are again finitely generated, see \cite[Prop.\ 6.5]{AtiyahMac}. Thus if $E_r$ is finitely generated, the same is true for $E_{r+1}=H(E_r,d_r)$: the differential respects the module structure so the cohomology is a quotient of the submodule $\ker d_r$. As the spectral sequence collapses after a finite number of pages (at most $\dim M$), we conclude that $E_\infty$ is finitely generated.
\end{proof}

Note that, since $S(\mathfrak{g}^*)^G$ is concentrated in even degrees, the module structure preserves even and odd degree elements. With regard to the resulting decomposition we have the following

\begin{cor}\label{Appendix:Cor:RankGleich}
If $\dim H^*(M)<\infty$, then the ranks of the $S(\mathfrak{g}^*)^G$-modules $E^{\even}_\infty$ (resp.\ $E_\infty^{\odd}$) and $H^{\even}_G(M)$ (resp.\ $H^{\odd}_G(M)$) coincide.
\end{cor}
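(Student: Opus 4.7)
The plan is to deduce the equality of ranks from a comparison of Poincar\'e series.

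First, since the filtration on the Cartan complex is canonically bounded, in each fixed total degree $n$ the induced filtration of the finite-dimensional vector space $H_G^n(M)$ is finite with successive quotients $F^p H_G^n(M)/F^{p+1} H_G^n(M) \cong E_\infty^{p, n-p}$, so that
\[\dim_\RR H_G^n(M) = \sum_{p+q=n} \dim_\RR E_\infty^{p,q}.\]
Summing separately over even and over odd $n$ shows that $H_G^{\even}(M)$ and $E_\infty^{\even}$ have the same Poincar\'e series (where $E_\infty^{p,q}$ is placed in total degree $p+q$), and the same for the odd parts. All four modules are finitely generated over $R=S(\mfg^*)^G$: $H_G^*(M)$ by the preceding proposition, its even and odd parts because $R$ is concentrated in even degrees (so the decomposition is one of $R$-submodules) combined with Noetherianity; analogously $E_\infty^{\even}$ and $E_\infty^{\odd}$ are subquotients of the finitely generated free module $E_1 \cong R \otimes H^*(M)$.

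Second, I would invoke the following commutative-algebra fact: for the graded polynomial ring $R = \RR[\sigma_1,\ldots,\sigma_r]$ with $\deg\sigma_i = d_i$ (Chevalley, Section \ref{sec:Coadjoint}), the Poincar\'e series of any finitely generated graded $R$-module $N$ takes the form $P_N(t) = Q_N(t)/\prod_i(1-t^{d_i})$ for some $Q_N(t) \in \ZZ[t]$, and $\rk_R N = Q_N(1)$. This is proved by choosing a finite graded free resolution of $N$ (Hilbert's syzygy theorem) and invoking additivity of both the Poincar\'e series and the rank on short exact sequences of finitely generated modules over the integral domain $R$. In particular, equality of Poincar\'e series implies equality of ranks, which combined with the first paragraph yields the claim.

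The main obstacle is precisely this commutative-algebra input. A more naive approach, trying to iterate rank additivity on the short exact sequences $0 \to F^{p+1} H_G^*(M) \to F^p H_G^*(M) \to F^p/F^{p+1} \to 0$, fails: the filtration is infinite (only bounded in each fixed total degree), and each individual quotient $F^p/F^{p+1}$, viewed as an $R$-module in isolation, is annihilated by $R^+=\bigoplus_{k>0}R^k$ and hence torsion, so the putative sum of ranks collapses to zero. What saves the day is that the $R$-module structure on the full $E_\infty = \bigoplus_{p,q} E_\infty^{p,q}$ is shifted (an element $f \in R^k$ sends $E_\infty^{p,q}$ to $E_\infty^{p+2k,q}$), and it is this shifted structure that the Poincar\'e series correctly encodes.
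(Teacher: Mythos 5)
Your argument is correct and is essentially the paper's own proof: both deduce the equality of ranks from the fact that $E_\infty$ and $H^*_G(M)$ have the same Poincar\'e series (degreewise, via the finite filtration in each total degree), combined with the commutative-algebra fact that for a finitely generated graded module $N$ over the graded polynomial ring $S(\mfg^*)^G$ the Hilbert series has the form $Q_N(t)\prod_i(1-t^{d_i})^{-1}$ with $\rk N = Q_N(1)$, proved exactly as you indicate (free resolutions and additivity). Your closing remark on why naive rank additivity along the filtration fails is a useful sanity check but not needed for the proof.
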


\begin{proof}
For a finitely generated graded module $M$ over the polynomial ring $S(\mathfrak{g}^*)^G$, the rank is encoded in its Hilbert-Poincar\'e series $H_M(t)=\sum_i \dim( M^i)\, t^i$:\ the latter takes the form $f(t)\prod_{i=1}^r(1-t^{k_i})^{-1}$ for some $f\in \mathbb{Z}[t]$, where $r$ is the number of variables of $S(\mathfrak{g}^*)^G$ and the $k_i$ are their degrees \cite[Thm.\ 11.1]{AtiyahMac}. The rank is then precisely $f(1)$ (check this for a free module first and then deduce it for general $M$ via a free resolution). As we have already seen, $E_\infty$ and $H^*_G(M)$ are isomorphic as graded vector spaces, so the claim follows.
\end{proof}

\begin{rem}
In the corollary above, it is tempting to argue that a basis of a free submodule in $H^*_G(M)$ projects down to the basis of a free submodule of $E_\infty$. However this is false in general.
\end{rem}

\subsection{Naturality and the comparison theorem}
We briefly discuss maps between spectral sequences and the important comparison Theorem. The latter enables us to prove Remark \ref{rem:commutingactionprinciple} in case $G$ and $H$ are tori. Also, a construction made in said proof is needed in the next and final section.
\begin{defn}
A morphism of spectral sequences $(E_r,d_r)\rightarrow (E_r',d_r')$ is a family of morphisms
$f_r\colon E_r\rightarrow E'_r$, defined for large $r$, that preserve the bigrading, commute with the differentials, and have the property that
$f_{r+1}$ is the map induced by $f_r$ in cohomology.
\end{defn}

In particular, if $E_\infty$ is defined, we obtain a map $f_\infty\colon E_\infty\rightarrow E'_\infty$.
Morphisms of spectral sequences associated to filtrations arise naturally via filtration-preserving maps: Suppose $(C,d)$ and $(C',d')$ are canonically bounded filtered cochain complexes and $f\colon C\rightarrow C'$ is a filtration-preserving chain map. Then $f$ maps $A_r^{p,q}$ (see the construction in Section \ref{Appendix:ConstrSec}) to ${A'}_r^{p,q}$ and induces maps $f_r\colon E_r\rightarrow E_r'$ for $r\geq 0$. One checks directly via the definitions that this is a morphism of spectral sequences. For proofs of this and the theorem below we refer to \cite[Thm.\ 5.5.11]{Weibel}.

\begin{thm}[comparison theorem]\label{Appendix:thm:Comparison}
If, in the above setting, one of the $f_r$ is an isomorphism, then so are all subsequent $f_r$ and $f$ induces an isomorphism in cohomology.
\end{thm}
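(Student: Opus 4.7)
The plan is to establish the conclusion in two stages: first show inductively that $f_s$ is an isomorphism for every $s \geq r$, then use convergence together with the canonical boundedness of the filtrations to deduce that $f$ induces an isomorphism $H^*(C,d) \to H^*(C',d')$. Throughout, I will exploit the fact that $f$ being filtration-preserving means it sends the approximate-cycle modules $A_s^{p,q}$ into ${A'}_s^{p,q}$, so that every $f_s$ respects the bigrading and commutes with $d_s$ (resp.\ $d_s'$).

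For the first stage, I would argue by induction on $s \geq r$. Suppose $f_s\colon E_s \to E_s'$ is an isomorphism of bigraded modules. Since $f_s$ is a chain map with respect to the differentials $d_s$ and $d_s'$, it restricts to an isomorphism $\ker d_s \to \ker d_s'$ and sends $\operatorname{im} d_s$ isomorphically to $\operatorname{im} d_s'$. Taking the quotient then yields an isomorphism on cohomology, which by the very definition of the induced map is $f_{s+1}$. Iterating, $f_s$ is an isomorphism for all $s \geq r$. Because both spectral sequences are first-quadrant (this is implicit in the canonical boundedness of the filtrations), for each fixed bidegree $(p,q)$ we have $E_s^{p,q} = E_\infty^{p,q}$ and ${E'}_s^{p,q} = {E'}_\infty^{p,q}$ once $s$ is large enough, so the limit map $f_\infty\colon E_\infty \to E'_\infty$ is also an isomorphism in every bidegree.

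For the second stage, I would pass to the induced filtrations $F^pH^n := F^pH^n(C,d)$ and $F^p{H'}^n := F^pH^n(C',d')$, with respect to which $f_\infty^{p,n-p}$ is identified with the map $F^pH^n/F^{p+1}H^n \to F^p{H'}^n/F^{p+1}{H'}^n$ induced by $f$. The canonical boundedness gives $F^{n+1}H^n = F^{n+1}{H'}^n = 0$, which anchors a downward induction on $p$: assuming $f\colon F^{p+1}H^n \to F^{p+1}{H'}^n$ is an isomorphism, the commutative diagram of short exact sequences
\[
\begin{array}{ccccccccc}
0 & \to & F^{p+1}H^n & \to & F^pH^n & \to & E_\infty^{p,n-p} & \to & 0 \\
  &     & \downarrow &     & \downarrow &     & \downarrow &     &   \\
0 & \to & F^{p+1}{H'}^n & \to & F^p{H'}^n & \to & {E'}_\infty^{p,n-p} & \to & 0
\end{array}
\]
together with the five lemma (applied with the inductive hypothesis on the left and the first-stage conclusion on the right) shows that $f\colon F^pH^n \to F^p{H'}^n$ is an isomorphism. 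Since $F^0H^n = H^n$ and $F^0{H'}^n = {H'}^n$ by canonical boundedness, the induction terminates at $p=0$ with the desired isomorphism on total cohomology.

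The only real subtlety is ensuring that the finite-length filtration on each $H^n$ is actually in place to start the downward induction; this is exactly what canonical boundedness of the filtrations of $C$ and $C'$ buys us, via Remark \ref{Appendix:Hfiltration} (the image filtration inherits finite length in each degree). Without this hypothesis one would have to invoke more delicate convergence arguments, but in our setting the five lemma applied finitely many times suffices.
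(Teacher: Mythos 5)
Your proof is correct: the paper itself gives no argument here and simply defers to Weibel, and your two-stage argument (isomorphisms propagate to all later pages and hence to $E_\infty$, then downward induction on the filtration degree via the short five lemma applied to $0\to F^{p+1}H^n\to F^pH^n\to E_\infty^{p,n-p}\to 0$, anchored by canonical boundedness) is exactly the standard proof found in the cited reference. The only point worth making explicit is the naturality of the isomorphism $E_\infty^{p,q}\cong F^pH^{p+q}/F^{p+1}H^{p+q}$ with respect to filtration-preserving chain maps, which you assert and which follows directly from the explicit description of that isomorphism in Section \ref{Appendix:ConstrSec}.
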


To illustrate the usefulness of the above theorem, we prove Remark \ref{rem:commutingactionprinciple} in the case of tori:

\begin{prop}\label{Appendix:Prop:QuotientShenanigans}
Let a torus $T=T'\times T''$ act on $M$ in such a way that the restricted action of the $T''$-factor is free. Then there is a map $C_{T'}(M/T'')\rightarrow C_{T}(M)$ of cdgas inducing an isomorphism in cohomology.
\end{prop}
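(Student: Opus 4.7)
The plan is to define the map explicitly and then establish the quasi-isomorphism via a spectral sequence comparison. Let $\pi\colon M\to M/T''$ denote the projection. For $\omega\in C_{T'}(M/T'')$, viewed as an equivariant polynomial map $\mft'\to\Omega(M/T'')$, I will set
\[
f(\omega)(X',X''):=\pi^*\bigl(\omega(X')\bigr).
\]
Verifying that $f$ is a well-defined morphism of cdgas is routine: $T'$-invariance of $f(\omega)$ follows from $T'$-invariance of $\omega$ together with $T'$-equivariance of $\pi$, while $T''$-invariance is automatic because $T''$ acts trivially on $M/T''$; multiplicativity is immediate; and commutation with the Cartan differentials follows because $\overline{X''}$ is tangent to the fibers of $\pi$ (so that $i_{\overline{X''}}\pi^*\eta=0$ for any $\eta\in\Omega(M/T'')$), while $d\pi$ sends the $T'$-fundamental field on $M$ to the $T'$-fundamental field on $M/T''$.

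To show $f$ is a quasi-isomorphism, I would equip both complexes with the multiplicative filtration by polynomial degree in $(\mft')^*$. This filtration is preserved by $f$ and is canonically bounded in each total degree (in degree $n$ the $(\mft')^*$-polynomial degree is at most $n/2$), so both spectral sequences converge and Theorem \ref{Appendix:thm:Comparison} reduces the claim to showing that the induced map on $E_1$-pages is an isomorphism. The $E_1$-page of $C_{T'}(M/T'')$ is $S((\mft')^*)\otimes H^*(M/T'')$ by Theorem \ref{thm:cohomofinvariantforms} (applied degreewise to $S^p((\mft')^*)\otimes \Omega(M/T'')^{T'}$).

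For $C_T(M)$, the part of $d_T$ preserving $(\mft')^*$-degree is precisely the $T''$-Cartan differential $1\otimes d_{T''}$ acting on the identification $E_0^p\cong S^p((\mft')^*)\otimes C_{T''}(M)^{T'}$. Since $T'$ is compact, averaging shows that taking $T'$-invariants commutes with $d_{T''}$-cohomology, so $E_1^p\cong S^p((\mft')^*)\otimes H^*_{T''}(M)^{T'}$. Because the $T''$-action is free, Theorem \ref{thm:eqcohomlocallyfreeactions} combined with Proposition \ref{prop:basiccohomfreeaction} provides an isomorphism $H^*_{T''}(M)\cong H^*(M/T'')$ induced by pullback along $\pi$; as $T'$ is connected, its action on $H^*(M/T'')$ is trivial, so the two $E_1$-pages match canonically and the induced map $f_1$ becomes the identity.

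The main delicacy will be tracking the identifications on $E_1$ carefully: one must confirm that the composite of $f$ with the inverse of the isomorphism $H^*(M/T'')\cong H^*_{T''}(M)^{T'}$ is the identity, which ultimately boils down to the fact that both arise from the same pullback $\pi^*\colon \Omega(M/T'')\to \Omega(M)$. Once this bookkeeping is done, the comparison theorem concludes the proof.
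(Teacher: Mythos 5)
Your map is the same one the paper uses (it is $\id_{S((\mft')^*)}\otimes\psi_1$ in their notation), but your proof that it is a quasi-isomorphism takes a genuinely different route. The paper reduces to $T''=S^1$ and inducts on the circle factors of $T''$; for a single circle it runs two separate applications of the comparison theorem, first filtering by the $(\mft'')^*$-degree to show that the inclusion $S((\mft'')^*)\otimes\Omega(M)^{T}\hookrightarrow C_{S^1}(M)$ is a quasi-isomorphism (there the $E_0$-differential is just $1\otimes d$, so only Theorem \ref{thm:cohomofinvariantforms} is needed), and only afterwards filtering by the $(\mft')^*$-degree. You handle general $T''$ in a single comparison by filtering by the $(\mft')^*$-degree, which avoids the induction entirely; the price is that your $E_1$-identification requires knowing that passing to $T'$-invariants commutes with taking $d_{T''}$-cohomology, and the phrase ``averaging shows'' hides the only nontrivial point of your argument: the usual averaging proof needs a chain homotopy between $g^*$ and $\id$ on the complex $(C_{T''}(M),d_{T''})$, not merely on $(\Omega(M),d)$. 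This can be supplied, e.g.\ by checking that $i_{\overline{X'}}\circ d_{T''}+d_{T''}\circ i_{\overline{X'}}=\cL_{\overline{X'}}$ on $C_{T''}(M)$ (the extra terms cancel because contractions anticommute), so that the proof of Theorem \ref{thm:cohomofinvariantforms} carries over verbatim; alternatively one can run an inner comparison argument on $C_{T''}(M)^{T'}\hookrightarrow C_{T''}(M)$ filtered by the $(\mft'')^*$-degree. With that point filled in, your identification of $f_1$ with $\id\otimes\pi^*$ — an isomorphism by Corollary \ref{cor:orbitspacedings} together with the triviality of the action of the connected group $T'$ on $H^*(M/T'')$ — and Theorem \ref{Appendix:thm:Comparison} complete the proof correctly.
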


\begin{proof}
It suffices to prove the proposition in case $T=T'\times S^1$. Then the general case $T^n=T^l\times T^r$ follows by induction.
Consider now an action of $T=T'\times S^1$ on $M$ with the $S^1$ factor acting freely. Via the above product decomposition the Lie algebra of $T$ decomposes as $\mathfrak{t}\oplus\mathfrak{t}_1$. In Corollary \ref{cor:orbitspacedings} it was proved that $\Omega(M/S^1)\cong \Omega_{\mathrm{bas}~S^1}(M)\rightarrow C_{S^1}(M)$ induces an isomorphism on cohomology. Note that if we restrict this map to $\Omega(M/S^1)^{T'}$, it will take values in $S(\mathfrak{t}_1^*)\otimes \Omega^{T}(M)$. We want to argue that in the diagram
\[\xymatrix{
\Omega(M/S^1)^{T'}\ar[r]^(0.45){\psi_1}\ar[d]^{\psi_2}& S(\mathfrak{t}_1^*)\otimes \Omega(M)^{T}\ar[d]^{\psi_3} \\
\Omega(M/S^1)\ar[r]^{\psi_4} & C_{S^1}(M)
}\]
the map $\psi_1$ induces an isomorphism in cohomology. By Theorem \ref{thm:cohomofinvariantforms} and Corollary \ref{cor:orbitspacedings} (applied to the proved $S^1$ case) we know that $\psi_2$ and $\psi_4$ induce isomorphisms. Consequently, if we show that $\psi_3$ induces an isomorphism, the same will hold for $\psi_1$.

Filter both complexes,  $S(\mathfrak{t}_1^*)\otimes \Omega^{T}(M)$ and $C_{S^1}(M)$, by the degree of $S(\mathfrak{t}_1^*)$ as we did for the construction of the spectral sequence for $C_{S^1}(M)$ (see Section \ref{Appendix:Section:CartanSpeq}). As $\psi_3$ is $S(\mathfrak{t}_1^*)$-linear it respects the filtration and induces a morphism of spectral sequences. As argued before, the $0$th pages of the spectral sequences are isomorphic to the respective filtered complexes $ S(\mathfrak{t}_1^*)\otimes \Omega^{T}(M)$ and $C_{S^1}(M)$ and one quickly checks that the map between the $0$th pages is just $\psi_3$. On both $0$th pages, the differential $d_0$ is $1\otimes d$, with $d$ the exterior derivative on $\Omega(M)$. The inclusion $\Omega(M)^T\rightarrow \Omega(M)$ factors through $\Omega(M)^{S^1}\rightarrow\Omega(M)$ and both induce isomorphisms in cohomology by Theorem \ref{thm:cohomofinvariantforms}. Consequently the inclusion $i\colon\Omega(M)^{T}\rightarrow\Omega(M)^{S^1}$ induces an isomorphism as well and we deduce that $\psi_3={\id_{S(\mathfrak{t}_1^*)}}{\otimes}\ { i}$ induces an isomorphism on $E_1=H(E_0,d_0)$. Now by the Comparison Theorem \ref{Appendix:thm:Comparison}, $\psi_3$ induces an isomorphism in cohomology.

The final step is to show that the map
\[\varphi\colon C_{T'}(M/S^1)=S(\mathfrak{t}^*)\otimes \Omega(M/S^1)^{T'}
\longrightarrow S(\mathfrak{t}^*)\otimes \left(S(\mathfrak{t}_1^*)\otimes \Omega(M)^{T}\right)=C_{T}(M)\]
defined as $\id_{S(\mathfrak{t}_l^*)}\otimes \psi_1$ induces an isomorphism in cohomology. To see this one proceeds analogously to before: Filter both complexes by the degree of $S(\mathfrak{t}^*)$. Then the $0$th pages will be isomorphic to $C_{T'}(M/S^1)$ and $C_{T}(M)$ (the bigrading on the latter is not the usual one!) and $\varphi$ induces a morphism of spectral sequences which on $E_0$ is just $\varphi$ itself. The differentials $d_0$ are $1\otimes d$ and $1\otimes d_{S^1}$. In particular $\varphi$ induces an isomorphism on the cohomology $E_1$ because $\psi_1$ does so on the right tensor factor. Another application of Theorem \ref{Appendix:thm:Comparison} yields the result.
\end{proof}

\subsection{A counterexample}
\label{sec:counterexample}
In \cite{Skjelbred} it was shown that under certain topological conditions, e.g.\ for compact manifolds, the equivariant cohomology of a $S^1$-action and the final page of the spectral sequence are isomorphic as $S(\mathfrak{t}^*)$-modules. For  tori of greater dimension this is no longer true.
We construct here a $T^2$-action on a compact manifold such that the final page of the spectral sequence associated to the Cartan model is not isomorphic as a graded $S(\mathfrak{t}^*)$-module to the equivariant cohomology.

Consider the standard action of the diagonal maximal torus $T^3$ of $\SU(4)$ by left multiplication, where we identify $(s,t,u)$ with the diagonal matrix with entries $(stu,\overline{s},\overline{t},\overline{u})$. The maximal diagonal torus of $\SU(2)$ is a circle and together they yield a product action of $T^4=S^1\times T^3$ on $\SU(2)\times \SU(4)$. We pull back this action along the homomorphism $T^3\rightarrow T^4,~(s,t,u)\mapsto(s,s,t,u)$. Now we take the quotient of the first circle factor of $T^3$ and consider the action of the middle and right circle factors to obtain an action of $T^2$ on the space
\[M:=(\SU(2)\times\SU(4))/S^1.\] This action has the desired properties as we will now show. In what follows the Lie algebra of the $r$-torus will be denoted $\mathfrak{t}_r$.

As it is our goal to show that $H_{T^2}^*(M)$ and $E_\infty$ are not isomorphic let us begin by pointing out the structural difference in the two modules.

\begin{claim} In $E_\infty$ there exists a nontrivial degree $2$ element which is torsion with respect to some linear polynomial in $S(\mathfrak{t}_2^*)$. The same does not hold for $H^*_{T^2}(M)$.
\end{claim}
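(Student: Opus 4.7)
My plan is to reduce the computation to $H^*(N/T^3)$ for $N = \SU(2) \times \SU(4)$, then exploit its explicit presentation. By Proposition~\ref{Appendix:Prop:QuotientShenanigans} and Corollary~\ref{cor:orbitspacedings}, since the $T^3 = S^1 \times T^2$-action on $N$ (pulled back from the free left-multiplication action of $T^1 \times T^3 \subset \SU(2)\times\SU(4)$) is free, we obtain
\[
H^*_{T^2}(M) \cong H^*_{T^3}(N) \cong H^*(N/T^3)
\]
as $S(\mathfrak{t}_2^*)$-algebras. Quotienting in stages realizes $N/T^3$ as a principal $S^1$-bundle over $\CC P^1 \times F$ (with $F = \SU(4)/T^3$ the full flag variety), whose Euler class is $e = \xi - \sigma \in H^2(\CC P^1 \times F)$, where $\xi$ generates $H^2(\CC P^1)$ and $\sigma, \tau, \upsilon$ are the usual Chern-root generators of $H^2(F)$; the form of $e$ is obtained from the character $(w;s,t,u) \mapsto w s^{-1}$ on $T^1 \times T^3$ with kernel $T^3$. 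A character-tracking check shows that under the classifying map of the principal $T^2$-bundle $M \to M/T^2$, the generators $\tau^*, \upsilon^* \in \mathfrak{t}_2^*$ pull back to $\bar\tau, \bar\upsilon \in H^2(N/T^3)$.

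\textbf{Low-degree cohomology.} Applying the Gysin sequence (and using that the base has vanishing odd cohomology), I would deduce
\[
H^2(N/T^3) = \RR\langle \bar\sigma, \bar\tau, \bar\upsilon\rangle, \qquad \bar\xi = \bar\sigma,
\]
and a basis
\[
H^4(N/T^3) = \RR\langle \bar\tau^2,\ \bar\upsilon^2,\ \bar\tau\bar\upsilon,\ \bar\sigma\bar\tau\rangle,
\]
subject to the decisive relation
\[
\bar\sigma\bar\tau + \bar\sigma\bar\upsilon = -\bar\tau^2 - \bar\upsilon^2 - \bar\tau\bar\upsilon, \qquad (\ast)
\]
obtained from the standard flag-manifold relation $\sigma^2 + \tau^2 + \upsilon^2 + \sigma\tau + \sigma\upsilon + \tau\upsilon = 0$ combined with $\bar\sigma^2 = \bar\xi^2 = 0$ inherited from $\CC P^1$.

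\textbf{No linear torsion in $H^*_{T^2}(M)$.} For a general $y = a\bar\sigma + b\bar\tau + c\bar\upsilon \in H^2_{T^2}(M)$ and nonzero $\ell = \lambda\tau^* + \mu\upsilon^* \in \mathfrak{t}_2^*$, I would expand $\ell \cdot y$ in the basis of $H^4(N/T^3)$ using $(\ast)$, obtaining
\[
a(\lambda-\mu)\,\bar\sigma\bar\tau + (b\lambda - a\mu)\,\bar\tau^2 + (c-a)\mu\,\bar\upsilon^2 + (b\mu + c\lambda - a\mu)\,\bar\tau\bar\upsilon.
\]
A short case split on whether $\lambda$, $\mu$, or $\lambda - \mu$ vanishes shows that $\ell \cdot y = 0$ forces $a = b = c = 0$. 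Thus no nonzero degree-$2$ element of $H^*_{T^2}(M)$ is annihilated by a linear polynomial.

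\textbf{Linear torsion in $E_\infty^{0,2}$.} The class $[x] \in H^2(M)$ defined by the pullback of a volume form of $S^2$ along $M \to S^2 = \SU(2)/S^1$ is $T^2$-basic (as $T^2$ acts trivially on the base), so it lifts to a closed equivariant form and, by a direct degree-parity argument, survives all differentials in the Cartan spectral sequence to a nonzero element of $E_\infty^{0,2}$. To compute $\tau^* \cdot [x]$ and $\upsilon^* \cdot [x]$ in $E_\infty^{2,2} = F^2 H^4 / F^4 H^4$, I would note that $H^4(M) = 0$ forces $F^2 H^4 = H^4$, while $F^4 H^4 = E_\infty^{4,0}$ is the image of $S^2(\mathfrak{t}_2^*) \to H^4$; the inequality $\dim E_\infty^{4,0} \leq \dim S^2(\mathfrak{t}_2^*) = 3$ together with the linear independence of $\bar\tau^2, \bar\tau\bar\upsilon, \bar\upsilon^2$ in $H^4$ forces $F^4 H^4 = \RR\langle \bar\tau^2, \bar\tau\bar\upsilon, \bar\upsilon^2\rangle$. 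Lifting $[x]$ to $\bar\xi = \bar\sigma$, one has $\tau^* \cdot [x] = \bar\sigma\bar\tau$ and $\upsilon^* \cdot [x] = \bar\sigma\bar\upsilon$; by $(\ast)$, $\bar\sigma\bar\upsilon \equiv -\bar\sigma\bar\tau$ modulo $F^4 H^4$, so $(\tau^* + \upsilon^*) \cdot [x] = 0$ in $E_\infty^{2,2}$, exhibiting the desired torsion element. The main obstacle will be careful filtration bookkeeping, in particular the dimension argument pinning down $F^4 H^4$ and the verification that the class of $\ell \cdot [x]$ in $E_\infty^{2,2}$ is independent of the chosen lift of $[x]$ (which holds because two lifts differ by an element of $F^2 H^2 = \langle\bar\tau, \bar\upsilon\rangle$, whose $\ell$-product already lies in $F^4 H^4$).
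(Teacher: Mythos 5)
Your proposal is correct and arrives at exactly the relation that drives the paper's counterexample, but by a different computational route. The paper stays inside the Cartan model: after identifying $H^*_{T^2}(M)\cong H^*_{T^3}(N)$, it pins down the degree-$4$ kernel of $S(\mft_3^*)\to H^*_{T^3}(N)$ by a spectral-sequence dimension count ($\dim H^3(N)=2$ bounds the image of $d_3$) combined with naturality under $T^3\hookrightarrow T^4$, which produces the two Weyl invariants $X^2$ and $X^2+XY+Y^2+YZ+Z^2+ZX$; both halves of the claim are then read off from these two polynomials, with the $E_\infty$ statement handled via the auxiliary spectral sequence of the $\RR[Y,Z]$-filtration of $C_{T^3}(N)$ from Proposition \ref{Appendix:Prop:QuotientShenanigans}. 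You instead present $H^{\leq 4}(N/T^3)$ as a ring via the Gysin sequence of the circle bundle $N/T^3\to \CC P^1\times F$ with Euler class $\xi-\sigma$: your relation $(\ast)$ is precisely the image of the paper's kernel element $X(Y+Z)+Y^2+YZ+Z^2$, your torsion class $\bar\xi=\bar\sigma$ is the paper's $\overline{X}$, and your annihilating linear form $\tau^*+\upsilon^*$ is the paper's $Y+Z$. Your route buys an explicit low-degree presentation of the ring and a transparent verification that no linear form in $\mft_2^*$ kills a degree-$2$ class, at the cost of quoting the ring $H^*(\CC P^1\times F)$ and checking that $\cup(\xi-\sigma)$ is injective on $H^2$ so that $(\ast)$ and $\bar\sigma^2=0$ really are the only degree-$4$ relations; the paper needs only $\dim H^3(N)$ and the invariant theory of $\SU(2)\times\SU(4)$. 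Your handling of $E_\infty$ directly through the multiplicative filtration of $H^*_{T^2}(M)$ (Theorem \ref{Appendix:Thm:MultSpecsec}) is arguably more streamlined than the paper's detour through the second spectral sequence.

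Two small points to tighten. First, the nonvanishing of your class in $E_\infty^{0,2}$ follows more cleanly from your own computation --- $F^1H^2_{T^2}(M)=F^2H^2_{T^2}(M)$ is the image of $S^1(\mft_2^*)$, namely $\langle\bar\tau,\bar\upsilon\rangle$, and $\bar\sigma\notin\langle\bar\tau,\bar\upsilon\rangle$ since $\bar\sigma,\bar\tau,\bar\upsilon$ are independent in $H^2(N/T^3)$ --- than from the ``survives all differentials'' argument, which additionally requires $[x]\neq 0$ in $H^2(M)$ and $H^1(M)=0$ (both true via the bundle $\SU(4)\to M\to S^2$, but not verified in your sketch). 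Second, the count $\dim H^4(N/T^3)=4$ should be recorded explicitly, since without it $\bar\tau^2,\bar\tau\bar\upsilon,\bar\upsilon^2$ might fail to be independent and $F^4H^4_{T^2}(M)$ could be smaller than claimed. Neither is a genuine obstacle.
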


To analyse $H^*_{T^2}(M)$ we will use that it is isomorphic to $H^*_{T^3}(N)$, where $N=\SU(2)\times \SU(4)$ with the aforementioned $T^3$-action.
The isomorphism is induced by the cdga morphism
\[\varphi\colon C_{T^2}(M)=S(\mathfrak{t}_2^*)\otimes \Omega(M)^{T^2}\longrightarrow S(\mathfrak{t}_2^*)\otimes \left(S(\mathfrak{t}_1^*)\otimes\Omega(N)^{T^3}\right)=C_{T^3}(N)\]
which was constructed in the proof of Proposition \ref{Appendix:Prop:QuotientShenanigans}, where we decompose $\mathfrak{t}_3$ as $\mathfrak{t}_2\oplus \mathfrak{t}_1$ in such a way that $\mathfrak{t}_1$ corresponds to the subcircle of $T^3$ such that $M=N/S^1$. In the proof we also argued that $\varphi$ induces an isomorphism between the $E_\infty$-term of the spectral sequence of $C_{T^2}(M)$ and the final page $E_\infty'$ of the spectral sequence obtained by filtering $C_{T^3}(N)$ by the degree of $S(\mathfrak{t}^*_2)$. This allows us to work with the latter spectral sequence when analysing the $E_\infty$-term.
Note that under the isomorphisms $H_{T^2}(M)\cong H_{T^3}(N)$ and $E_\infty\cong E_\infty'$, the $S(\mathfrak{t}_2^*)$-module structure on the left side corresponds to the pullback of the $S(\mathfrak{t}_3^*)$-module structure on the right side along the inclusion $S(\mathfrak{t}_2^*)\rightarrow S(\mathfrak{t}^*_3)$.

Now let $X,Y,Z\in \mathfrak{t}_3^*$ be the dual basis of the standard basis of $\mathfrak{t}_3$, with $X$ in the $\mathfrak{t}_1^*$ summand of the decomposition $\mathfrak{t}_3^*=\mathfrak{t}_2^*\oplus \mathfrak{t}_1^*$.
\begin{lemma}
The map $S(\mathfrak{t}^*_3)\rightarrow H_{T^3}^*(N)$ is injective in degrees up to $3$ and its kernel in degree $4$ is generated by $X^2$ and $X^2+XY+Y^2+YZ+Z^2 +ZX$.
\end{lemma}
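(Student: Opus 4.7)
The plan is to exploit two facts about the $T^3$-action on $N=\SU(2)\times \SU(4)$: it is free (if $(s,t,u)\in T^3$ fixes $(A,B)$, the $\SU(2)$-equation forces $s=1$, and then the $\SU(4)$-equation forces $t=u=1$), and the acting $T^3$ embeds into the maximal torus $T^4\subset G:=\SU(2)\times \SU(4)$ via $(s,t,u)\mapsto(s;\,s,t,u)$. Freeness gives $H^*_{T^3}(N)\cong H^*(N/T^3)=H^*(G/T^3)$, and under this identification the map $S(\mathfrak{t}_3^*)\to H^*_{T^3}(N)$ becomes the map induced in cohomology by the classifying map of the principal $T^3$-bundle $G\to G/T^3$. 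Since $T^4/T^3\cong S^1$, there is a principal $S^1$-bundle
\[
G/T^3\longrightarrow G/T^4=(\SU(2)/S^1)\times(\SU(4)/T^3)=S^2\times(\SU(4)/T^3),
\]
whose base is the product of the Hopf base with the complete complex flag variety.

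Next I would compute $H^*(G/T^4)$ via Borel's theorem and identify the Euler class of this $S^1$-bundle. Borel gives
\[
H^*(G/T^4)\cong \RR[A,S,T,U]/(A^2,\sigma_2,\sigma_3,\sigma_4),
\]
where $A$ corresponds to the $\SU(2)$-circle, $S,T,U$ to the $\SU(4)$-torus, and $\sigma_i$ are the elementary symmetric polynomials in the weights $\{S+T+U,-S,-T,-U\}$ of the standard $\SU(4)$-representation; a direct computation yields $\sigma_2=-(S^2+T^2+U^2+ST+SU+TU)$. Since the kernel of the restriction $\mathfrak{t}_4^*\to\mathfrak{t}_3^*$ is spanned by $A-S$, the Euler class of the $S^1$-bundle is $e:=A-S\in H^2(G/T^4)$ (up to sign). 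By commutativity of the appropriate classifying square, the map $S(\mathfrak{t}_3^*)\to H^*(G/T^3)$ factors as $S(\mathfrak{t}_3^*)=S(\mathfrak{t}_4^*)/(A-S)\to H^*(G/T^4)\xrightarrow{p^*}H^*(G/T^3)$; explicitly $X,Y,Z$ are $p^*S,p^*T,p^*U$, and $p^*A=p^*S$.

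Finally, I apply the Gysin sequence of this $S^1$-bundle to compute $H^n(G/T^3)$ and the image of $S(\mathfrak{t}_3^*)$ for $n\leq 4$. Since $H^{\odd}(G/T^4)=0$ one immediately gets $H^1(G/T^3)=0$ and $H^3(G/T^3)=\ker(\cdot e\colon H^2(G/T^4)\to H^4(G/T^4))$; expanding a general $\alpha=aA+bS+cT+dU$ in the monomial basis $\{AS,AT,AU,S^2,ST,SU,T^2,TU\}$ of $H^4(G/T^4)$ shows that $(A-S)\alpha=0$ forces $\alpha=0$, so $H^3(G/T^3)=0$. In degree $2$, $H^2(G/T^3)=H^2(G/T^4)/(e)$ is three-dimensional with basis $[S]=[A],[T],[U]$, which is precisely the image of $X,Y,Z$; thus the map $S(\mathfrak{t}_3^*)\to H^*(G/T^3)$ is injective in degrees $\leq 3$ (the odd cases being trivial since $S^n(\mathfrak{t}_3^*)=0$). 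In degree $4$, $H^4(G/T^3)=H^4(G/T^4)/e\cdot H^2(G/T^4)$; the image of $e$ is spanned by $\{-AS,\,AS-S^2,\,AT-ST,\,AU-SU\}$, so the quotient has basis $\{ST,SU,T^2,TU\}$ and dimension $4$. Both $X^2=p^*(S^2)=0$ (because $S^2\equiv AS\equiv 0$ in the quotient) and $X^2+XY+Y^2+YZ+Z^2+ZX=-p^*\sigma_2=0$ therefore lie in the kernel. Since $S^2(\mathfrak{t}_3^*)$ has dimension $6$ and the restriction of the map to the subspace $\RR\{XY,XZ,Y^2,YZ\}$ is an isomorphism onto $H^4(G/T^3)$, the kernel is exactly two-dimensional and so is generated by these two linearly independent elements. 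The main obstacle is making the factorization through $H^*(G/T^4)$ and the identification of the Euler class $e=A-S$ precise; once those are in hand, the remainder is bookkeeping in the Gysin sequence.
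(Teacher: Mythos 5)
Your proof is correct, but it follows a genuinely different route from the one in the paper. The paper stays entirely inside the spectral sequence of the Cartan model $C_{T^3}(N)$: injectivity in degree $2$ holds because $H^1(N)=H^2(N)=0$ leaves no room for a differential to reach $E_r^{2,0}$; the degree-$4$ kernel is bounded above by $\dim H^3(\SU(2)\times\SU(4))=2$, since only $d_3\colon E_3^{0,3}\to E_3^{4,0}$ can hit $E_r^{4,0}$; and the bound is then saturated by restricting the two degree-$4$ Weyl-invariant polynomials of $\SU(2)\times\SU(4)$ along $i^*\colon S(\mathfrak{t}_4^*)\to S(\mathfrak{t}_3^*)$, which lie in the kernel by Remark \ref{rem:modulestructureG/H}. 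You instead exploit freeness of the $T^3$-action (left translation by a subtorus), identify the $S(\mathfrak{t}_3^*)$-algebra structure on $H^*_{T^3}(N)\cong H^*(G/T^3)$ with the characteristic homomorphism of the principal bundle $G\to G/T^3$, and run the Gysin sequence of the circle bundle $G/T^3\to G/T^4$ over the product of flag manifolds. This buys an exact answer rather than an upper bound: you compute $\dim H^4(G/T^3)=4$ and conclude by a rank count that the degree-$4$ kernel is exactly two-dimensional, obtaining $H^{\leq 4}(G/T^3)$ explicitly as a bonus (the degree-$1$ and degree-$3$ computations are not actually needed, since $S(\mathfrak{t}_3^*)$ is concentrated in even degrees). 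The price is the two standard identifications you flag yourself: that the module structure is the Chern--Weil map of $G\to G/T^3$, and that the Euler class of $G/T^3\to G/T^4$ is a nonzero multiple of $A-S$ --- only this, not the exact normalization, enters your Gysin computation, since it is the image of a weight vanishing on $\mathfrak{t}_3$. Both facts are true and compatible with Proposition \ref{prop:eqcohomtransitive} and Remark \ref{rem:modulestructureG/H}, but they lie outside the purely Cartan-model bookkeeping the appendix otherwise uses; the two arguments share only the description of $\ker\bigl(S(\mathfrak{t}_4^*)\to H^*_{T^4}(N)\bigr)$ by Weyl invariants, which is where the two quadrics ultimately come from in either approach.
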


\begin{proof} Let $(E_r,d_r)$ denote the spectral sequence of $C_{T^3}(N)$. The map $S^p(\mathfrak{t}_3^*)\rightarrow H_{T^3}^{2p}(N)$ factors as
\[S^p(\mathfrak{t}_3^*)\rightarrow E_\infty^{2p,0}\cong F^{2p}H^{2p}_{T^3}(N)\subset H^{2p}_{T^3}(N),\]
where we have used that $F^{2p+1}H^{2p}_{T^3}(N)=0$ (see the definition of the isomorphism at the end of Section \ref{Appendix:ConstrSec}). In particular the kernels of $S(\mathfrak{t}_3^*)\rightarrow E_\infty$ and $S(\mathfrak{t}_3^*)\rightarrow H_{T^3}^*(N)$ coincide.

We have $E_1=S(\mathfrak{t}^*_3)\otimes H^*(\SU(2)\times \SU(4))$. By the K\"unneth formula, $H^*(\SU(2)\times \SU(4))$ is trivial in degrees $1$ and $2$. For degree reasons, no elements in $E_1^{2,0}$ can be hit by a differential, and thus they live to infinity. This shows injectivity. Elements in $E_1^{4,0}$ live to $E_3^{4,0}$, where they can potentially be hit by $d_3\colon E_3^{0,3}\rightarrow E_3^{4,0}\cong H^3(\SU(2)\times\SU(4))$. This is the only nonzero differential entering this position and thus the kernel in degree $4$ corresponds to the image of $d_3$ in $E_3^{4,0}$. In particular it is at most $2$-dimensional because $\dim H^3(\SU(2)\times\SU(4))=2$. It remains to show that the polynomials from the lemma actually lie in the kernel in which case they will span it.

Recall that the $T^3$-action is defined as a pullback of the product $T^4$-action on $N$ along a homomorphism which on Lie algebras is given by $i\colon \mathfrak{t}_3\rightarrow\mathfrak{t}_4,~(x,y,z)\mapsto (x,x,y,z)$ where we use the standard bases. We have a commutative diagram
\[\xymatrix{S(\mathfrak{t}_4^*)\ar[r]\ar[d]  & H_{T^4}^*(N)\ar[d]\\
S(\mathfrak{t}_3^*)\ar[r] & H_{T^3}^*(N)
}\]
induced by the pullback map $i^*\colon C_{T^4}(N)\rightarrow C_{T^3}(N)$.
Let $W,X,Y,Z$ denote the dual basis of the standard basis of $\mathfrak{t}_4$, where $W$ corresponds to the circle factor acting on $\SU(2)$ and $X,Y,Z$ correspond to the maximal torus of $\SU(4)$. Note that $N$ is actually a Lie group and that the $T^4$-action is the action of a maximal torus of $N$. By Remark \ref{rem:modulestructureG/H}, the kernel of $S(\mathfrak{t}_4^*)\rightarrow H_{T^4}^*(N)$ consists of the Weyl-invariant polynomials which in (cohomological) degree 4 are $p_1=W^2$ and $p_2=X^2+XY+Y^2+YZ+Z^2+ZX$. Hence the elements $i^*(p_1),i^*(p_2)$ lie in the kernel of $S(\mathfrak{t}_3^*)\rightarrow H_{T^3}^*(N)$. They are precisely the polynomials from the lemma because $i^*$ maps $W$ to $X$ and $X,Y,Z$ to themselves.
\end{proof}

As we see from the spectral sequence of $C_{T^3}(N)$, the elements $X,Y,Z$ induce a basis of $H^2_{T^3}(N)$. No element of the degree-$4$ part of $\ker(S(\mathfrak{t}^*_3)\rightarrow H_{T^3}^*(N))$ is divisible by a linear polynomial from $S(\mathfrak{t}^*_2)$. Indeed, for an element of the form $aY+bZ$ to divide a nonzero element of the form $cX^2+d(X^2+X(Y+Z)+Y^2+YZ+Z^2)$ it is certainly necessary that $c=-d$ and $a=b$. But $Y+Z$ does not divide $X(Y+Z)+Y^2+YZ+Z^2$.
This proves the claim that no nonzero element of $H_{T^2}^2(M)$ is sent to $0$ by multiplication with a linear polynomial from $S(\mathfrak{t}^*_2)$.

On the contrary, consider the element $\overline{X}\in {E'}_\infty^{0,2}$ induced by $X$ in the spectral sequence obtained by filtering $C_{T^3}(N)$ by the degree in $\mathbb{R}[Y,Z]$ (recall that $E_\infty'$ is isomorphic to the final page associated to $C_{T^2}(M)$). By the lemma, $X(Y+Z)+Y^2+YZ+Z^2$ is a coboundary. But this shows that $X(Y+Z)$ is a coboundary up to elements of filtration degree $4$ and therefore becomes trivial in ${E'}_\infty^{2,2}$. Thus $\overline{X}(Y+Z)=0$. We have shown that $E_\infty$ and $H^*_{T^2}(M)$ are not isomorphic as graded modules.

\end{document}